\theoremstyle{definition}
\newtheorem{definition}{Definition}[section]
\newtheorem{remark}[definition]{Remark}
\newtheorem{example}[definition]{Example}
\theoremstyle{theorem}
\newtheorem{theorem}[definition]{Theorem}
\newtheorem{lemma}[definition]{Lemma}
\newtheorem{proposition}[definition]{Proposition}
\newtheorem{corollary}[definition]{Corollary}
\numberwithin{equation}{section}
\begin{document}

\title{Asymptotic structure of general metric spaces at infinity}

\author{Viktoriia Bilet and Oleksiy Dovgoshey}
\date{}
\maketitle

\begin{abstract}
Let $(X,d)$ be an unbounded metric space and $\tilde r=(r_n)_{n\in\mathbb N}$ be a scaling sequence of positive real numbers tending to infinity. We define the pretangent space $\Omega_{\infty, \tilde r}^{X}$ to $(X, d)$ at infinity as a metric space whose points are equivalence classes of sequences $(x_n)_{n\in\mathbb N}\subset X$ which tend to infinity with the speed of $\tilde r$. It is proved that the pretangent spaces $\Omega_{\infty, \tilde r}^{X}$ are complete for every unbounded metric space $(X, d)$ and every scaling sequence $\tilde r$. The finiteness conditions of $\Omega_{\infty, \tilde r}^{X}$ are found.
\end{abstract}

\noindent\textbf{Keywords and phrases:} finite metric space, complete metric space, structure of metric space at infinity, local porosity at infinity.

\bigskip

\noindent\textbf{2010 Mathematics subject classification:} 54E35

\section{Introduction}

Under the asymptotic structure of an unbounded metric space $(X,d)$ at infinity we mean the set of metric spaces which are the limits of rescaling metric spaces $\left(X, \frac{1}{r_n} d\right)$ for $r_n$ tending to infinity. The Gromov--Hausdorff convergence and the asymptotic cones are most often used for construction of such limits. Both of these approaches are based on higher-order abstractions (see, for example, \cite{Ro} for details), which makes them very powerful, but it does away the constructiveness. In this paper we propose a more elementary, sequential approach for describing the asymptotic structure of unbounded metric spaces at infinity.

Let $(X,d)$ be a metric space and let $\tilde r=(r_n)_{n\in\mathbb N}$ be a sequence of positive real numbers with $\mathop{\lim}\limits_{n\to\infty}r_{n}=\infty$. In what follows $\tilde{r}$ will be called a \emph{scaling sequence} and the formula $(x_n)_{n\in\mathbb N}\subset A$ will be mean that all elements of the sequence $(x_n)_{n\in\mathbb N}$ belong to the set~$A$.

\begin{definition}\label{D1.1}
Two sequences $\tilde{x}=(x_n)_{n\in \mathbb N}\subset X$ and $\tilde{y}=(y_n)_{n\in \mathbb N}\subset X$ are \emph{mutually stable} with respect to the scaling sequence $\tilde{r}=(r_n)_{n\in \mathbb N}$ if there is a finite limit
\begin{equation}\label{e1.1}
\lim_{n\to\infty}\frac{d(x_n,y_n)}{r_n} := \tilde{d}_{\tilde{r}}(\tilde{x},\tilde{y}) = \tilde{d}(\tilde{x}, \tilde{y}).
\end{equation}
\end{definition}

Let $p\in X.$ Denote by $Seq(X, \tilde r)$ the set of all sequences $\tilde x=(x_n)_{n\in\mathbb N}\subset X$ for which there is a finite limit

\begin{equation}\label{e1.2}
\lim_{n\to\infty}\frac{d(x_n, p)}{r_n} := \tilde{\tilde d}_{\tilde r}(\tilde x)
\end{equation}
and such that $\mathop{\lim}\limits_{n\to\infty}d(x_n, p)=\infty.$

\begin{definition}\label{D1.2}
A set $F\subseteq Seq(X, \tilde r)$ is \emph{self-stable} if any two $\tilde{x}, \tilde{y} \in F$ are mutually stable. $F$ is \emph{maximal self-stable} if it is self-stable and, for arbitrary $\tilde{y}\in Seq(X, \tilde r)$, we have either $\tilde{y}\in F$ or there is $\tilde{x}\in F$ such that $\tilde{x}$ and $\tilde{y}$ are not mutually stable.
\end{definition}

The maximal self-stable subsets of $Seq(X, \tilde r)$ will be denoted as $\tilde X_{\infty, \tilde r}$.

\begin{remark}\label{r1.3}
If $\tilde x=(x_n)_{n\in\mathbb N} \in Seq(X, \tilde r)$ and $p, b\in X,$
then the triangle inequality implies
\begin{equation}\label{e1.3}
\lim_{n\to\infty}\frac{d(x_n, p)}{r_n} = \lim_{n\to\infty}\frac{d(x_n, b)}{r_n}.
\end{equation}
In particular, $Seq(X, \tilde r),$  the self-stable subsets and the maximal self-stable subsets of $Seq(X, \tilde r)$ are invariant w.r.t. the choosing a point $p\in X$ in \eqref{e1.2}.
\end{remark}

Consider a function $\tilde{d}:\tilde{X}_{\infty,\tilde{r}} \times \tilde{X}_{\infty,\tilde{r}} \rightarrow \mathbb R$ satisfying \eqref{e1.1} for all $\tilde{x}$, $\tilde{y} \in \tilde{X}_{\infty,\tilde{r}}$. Obviously, $\tilde{d}$ is symmetric and nonnegative. Moreover, the triangle inequality for $d$ gives us the triangle inequality for $\tilde d$,
$$
\tilde{d}(\tilde{x},\tilde{y})\leq\tilde{d}(\tilde{x},\tilde{z})+\tilde{d}(\tilde{z},\tilde{y}).
$$
Hence $(\tilde{X}_{\infty, \tilde{r}},\tilde{d})$ is a pseudometric space.

Now we are ready to define the main object of our research.

\begin{definition}\label{D1.3}
Let $(X,d)$ be an unbounded metric space, let $\tilde{r}$ be a scaling sequence and let $\tilde{X}_{\infty, \tilde{r}}$ be a maximal self-stable subset of $Seq(X, \tilde r)$. The \emph{ pretangent space} to $(X, d)$ (at infinity, with respect to $\tilde{r}$) is the metric identification of the pseudometric space $(\tilde{X}_{\infty,\tilde{r}},\tilde{d})$.
\end{definition}


Since the notion of pretangent space is basic for the paper, we recall the metric identification construction. Define a relation $\equiv$ on $Seq(X, \tilde r)$ as
\begin{equation}\label{e1.4}
\left(\tilde x\equiv \tilde y\right)\Leftrightarrow \left(\tilde d_{\tilde{r}}(\tilde x, \tilde y)=0\right).
\end{equation}
The reflexivity an the symmetry of $\equiv$ are evident. Let $\tilde x, \tilde y, \tilde z\in Seq (X, \tilde r)$ and $\tilde x\equiv\tilde y$ and $\tilde y\equiv\tilde z.$ Then the inequality $$\limsup_{n\to\infty}\frac{d(x_n, z_n)}{r_n}\le\lim_{n\to\infty}\frac{d(x_n, y_n)}{r_n}+\lim_{n\to\infty}\frac{d(y_n, z_n)}{r_n}$$ implies $\tilde x\equiv\tilde z.$
Thus $\equiv$ is an equivalence relation.

Write $\Omega_{\infty,\tilde r}^{X}$ for the set of equivalence classes generated by the restriction of $\equiv$ on the set $\tilde{X}_{\infty, \tilde{r}}$. Using general properties of pseudometric spaces we can prove (see, for example, \cite{Kelley}) that the function $\rho \colon \Omega_{\infty, \tilde r}^{X} \times \Omega_{\infty, \tilde r}^{X} \to \mathbb{R}$ with
\begin{equation}\label{e1.5}
\rho(\alpha,\beta):=\tilde d_{\tilde r}(\tilde x, \tilde y), \quad \tilde x\in \alpha \in \Omega_{\infty, \tilde r}^{X}, \quad \tilde y\in \beta \in \Omega_{\infty, \tilde r}^{X},
\end{equation}
is a well-defined metric on~$\Omega_{\infty,\tilde r}^{X}$. The metric identification of $(\tilde X_{\infty, \tilde r}, \tilde d)$ is the metric space $(\Omega_{\infty, \tilde r}^{X}, \rho).$

Let $(n_k)_{k\in\mathbb N}\subset\mathbb N$ be a strictly increasing sequence. Denote by $\tilde r'$ the subsequence $(r_{n_k})_{k\in \mathbb N}$ of the scaling sequence $\tilde r=(r_n)_{n\in\mathbb N}$ and, for every $\tilde x=(x_n)_{n\in\mathbb N}\in Seq(X, \tilde r)$, write $\tilde x' := (x_{n_k})_{k\in\mathbb N}$. It is clear that $$\{\tilde x'\in Seq(X, \tilde r)\}\subseteq Seq(X, \tilde r')$$ and $\tilde{\tilde d}_{\tilde r'}(\tilde x') = \tilde{\tilde d}_{\tilde r}(\tilde x)$ holds for every $\tilde x\in Seq(X, \tilde r).$  Moreover, if sequences $\tilde x$, $\tilde y\in Seq(X, \tilde r)$ are mutually stable w.r.t. $\tilde r$, then $\tilde x'$ and $\tilde y'$ are mutually stable w.r.t. $\tilde r'$ and
\begin{equation}\label{e1.6}
\tilde d_{\tilde r}(\tilde x, \tilde y)=\tilde d_{\tilde r'}(\tilde x', \tilde y').
\end{equation}
By Zorn's lemma, for every $\tilde{X}_{\infty, \tilde{r}}\subseteq Seq(X, \tilde r)$, there is $\tilde X_{\infty,\tilde r'}\subseteq Seq(X, \tilde r')$ such that
\begin{equation}\label{e1.7}
\{\tilde x':\tilde x \in \tilde X_{\infty,\tilde r}\}\subseteq \tilde X_{\infty,\tilde r'}.
\end{equation}
Denote by $\varphi_{\tilde r'}$ the mapping from $\tilde X_{\infty,\tilde r}$ to $\tilde X_{\infty,\tilde r'}$ with $\varphi_{\tilde r'}(\tilde x)=\tilde x'$ for $\tilde x\in\tilde X_{\infty,\tilde r}.$ It follows from~\eqref{e1.6} that, after metric identifications, the mapping $\varphi_{\tilde r'}\colon \tilde X_{\infty, \tilde r} \to \tilde X_{\infty, \tilde r'}$ passes to an isometric embedding $em'\colon \Omega_{\infty,\tilde r}^{X} \rightarrow \Omega_{\infty,\tilde r'}^{X}$ such that the
diagram
\begin{equation}\label{e1.8}
\begin{CD}
\tilde X_{\infty, \tilde r} @>\varphi_{\tilde r'}>> \tilde X_{\infty, \tilde r'}\\
@V{\pi}VV @VV{\pi'}V \\
\Omega_{\infty, \tilde r}^{X} @>em'>> \Omega_{\infty, \tilde r'}^{X}
\end{CD}
\end{equation}
is commutative. Here $\pi$ and $\pi'$ are the corresponding natural projections,
\begin{equation}\label{e1.9}
\begin{aligned}
\pi(\tilde x) &:= \{\tilde y \in \tilde X_{\infty,\tilde r}\colon \tilde d_{\tilde r}(\tilde x, \tilde y)=0\}, \quad
\pi'(\tilde t) &:= \{\tilde y \in \tilde X_{\infty,\tilde r'}\colon \tilde d_{\tilde r'}(\tilde t, \tilde y)=0\}
\end{aligned}
\end{equation}
for all $\tilde x\in \tilde X_{\infty, \tilde r}$ and $\tilde t\in\tilde X_{\infty, \tilde r'}.$

\begin{definition}\label{D1.4}
Let $(X,d)$ be an unbounded metric space and let $\tilde{r}$ be a scaling sequence. A pretangent $\Omega_{\infty,\tilde r}^{X}$ is \emph{tangent} if $em'\colon \Omega_{\infty,\tilde r}^{X} \rightarrow \Omega_{\infty,\tilde r'}^{X}$ is surjective for every $\tilde X_{\infty, \tilde r'}.$
\end{definition}

It is can be proved that the following statements are equivalent.

$\bullet$ The metric space $\Omega_{\infty, \tilde r}^{X}$ is tangent.

$\bullet$ The mapping $em^{'}: \Omega_{\infty, \tilde r}^{X}\to\Omega_{\infty, \tilde r'}^{X}$ is an isometry for every $\tilde r'.$

$\bullet$ The set $\{\tilde x': \tilde x\in\tilde X_{\infty, \tilde r}\}$ is a maximal self-stable subset of $Seq(X, \tilde r')$ for every $\tilde r'.$

$\bullet$ The mapping $\varphi_{\tilde r'}:\tilde X_{\infty, \tilde r}\to\tilde X_{\infty, \tilde r'}$ is onto for every $\tilde r'.$

\medskip

In conclusion of this brief introduction we note that there exist other techniques which allow to investigate the asymptotic properties of metric spaces at infinity. As examples, we mention only the Gromov product which can be used to define a metric structure on the boundaries of hyperbolic spaces \cite{BS}, \cite{Sc}, the balleans theory~\cite{PZ} and the Wijsman convergence \cite{LechLev}, \cite{Wijs64}, \cite{Wijs66}.

\section{Basic properties of pretangent spaces}

Let us denote by $\tilde X_{\infty}$ the set of all sequences $(x_n)_{n\in\mathbb N}\subset X$ satisfying the limit relation $\mathop{\lim}\limits_{n\to\infty}d(x_n, p)=\infty$ with $p\in X.$ It is clear that $Seq(X, \tilde r)\subseteq\tilde X_{\infty}$ holds for every scaling sequence $\tilde r.$

\begin{proposition}\label{pr2.1}
Let $(X, d)$ be an unbounded metric space. Then the following statements hold.
\begin{enumerate}
\item[\rm(i)] The set $Seq(X, \tilde r)$ is nonempty for every scaling sequence $\tilde r.$
\item[\rm(ii)] For every $\tilde x \in\tilde X_{\infty},$ there exists a scaling sequence $\tilde r$ such that $\tilde x\in Seq(X, \tilde r)$.
\end{enumerate}
\end{proposition}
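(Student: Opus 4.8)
The plan is to fix a point $p\in X$ once and for all (legitimate by Remark~\ref{r1.3}, since membership in $Seq(X,\tilde r)$ does not depend on the base point used in \eqref{e1.2}) and to treat the two parts separately, producing in each case an explicit sequence or scaling sequence. Recall from the text that $Seq(X,\tilde r)\subseteq\tilde X_\infty$ and that the defining condition $\lim_n d(x_n,p)=\infty$ of $\tilde X_\infty$ is exactly the second requirement in \eqref{e1.2}, so in both parts the only nontrivial task is the finite rescaled limit.

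For (i) I would aim to produce a sequence escaping to infinity so slowly that its rescaled distances tend to $0$; this is the cheapest way to satisfy both requirements of \eqref{e1.2} simultaneously. Concretely, since $(X,d)$ is unbounded I would first choose $(z_k)_{k\in\mathbb N}\subset X$ with $d(z_k,p)\to\infty$. Because $r_n\to\infty$ we also have $\sqrt{r_n}\to\infty$, so for all large $n$ the set $\{k:d(z_k,p)\le\sqrt{r_n}\}$ is nonempty and finite, and I would set $x_n:=z_{k(n)}$ with $k(n):=\max\{k:d(z_k,p)\le\sqrt{r_n}\}$ (and $x_n:=z_1$ for the finitely many remaining $n$). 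Then $d(x_n,p)\le\sqrt{r_n}$ forces $\frac{d(x_n,p)}{r_n}\le r_n^{-1/2}\to0$, while for each fixed $M$ the relation $\sqrt{r_n}\ge d(z_M,p)$ eventually holds, giving $k(n)\ge M$ and hence $k(n)\to\infty$, so that $d(x_n,p)\to\infty$. Thus $\tilde x=(x_n)\in Seq(X,\tilde r)$ with $\tilde{\tilde d}_{\tilde r}(\tilde x)=0$, which proves nonemptiness.

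For (ii) the scaling sequence is essentially forced by the given data: for $\tilde x=(x_n)\in\tilde X_\infty$ I would simply take $r_n:=d(x_n,p)$, replacing the value by $1$ for the finitely many indices with $x_n=p$, or, more uniformly, $r_n:=d(x_n,p)+1$ to guarantee positivity. Since $\tilde x\in\tilde X_\infty$ means $d(x_n,p)\to\infty$, the sequence $\tilde r$ is indeed a scaling sequence, the required limit $\lim_n\frac{d(x_n,p)}{r_n}=1$ is finite, and the condition $\lim_n d(x_n,p)=\infty$ of \eqref{e1.2} holds by hypothesis; hence $\tilde x\in Seq(X,\tilde r)$.

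The routine parts here are the verifications of the limits; the only genuine point, and the one I would be most careful about, is the construction in (i). Its subtlety is that the set of realized distances $\{d(x,p):x\in X\}$ may be very sparse, so one cannot expect to hit a prescribed rate such as $d(x_n,p)\asymp r_n$. Targeting the limit $0$ and selecting, at stage $n$, the farthest available point still lying within the slowly growing radius $\sqrt{r_n}$ is precisely what circumvents this sparsity, and checking that this selection still escapes to infinity (that $k(n)\to\infty$) is the step deserving the most attention.
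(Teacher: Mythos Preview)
Your argument is correct and follows essentially the same approach as the paper. For (ii) you give exactly the paper's construction $r_n:=d(x_n,p)$ (adjusted to $1$ when $x_n=p$), and for (i) you use the same key device---pick points whose distance to $p$ is at most $\sqrt{r_n}$ so that the rescaled limit is forced to be~$0$---the only cosmetic difference being that you select from a prearranged sequence $(z_k)$ escaping to infinity, whereas the paper picks $x_n$ approximating the supremum of distances inside the ball $\overline{B}(p,\sqrt{r_n})$.
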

\begin{proof}
(i) Let $\tilde r=(r_n)_{n\in\mathbb N}$ be a scaling sequence and let $p\in X$. Let us denote by $\overline{B}\left(p, r_{n}^{\frac{1}{2}}\right)$ the closed ball
\begin{equation}\label{e2.1}
\{x\in X: d(x, p)\le r_{n}^{\frac{1}{2}}\}.
\end{equation}
Write
\begin{equation}\label{e2.2}
k_{n} := \sup\{d(x, p): x\in \overline{B}(p, r_n^{\frac{1}{2}})\},
\end{equation}
$n= 1,2, \ldots$. We can find $\tilde x=(x_n)_{n\in\mathbb N}\subset X$ such that
\begin{equation}\label{e2.3}
\lim_{n\to\infty}\frac{k_n}{d(x_n, p)}=1.
\end{equation}
Since $X$ is unbounded, the limit relation $\mathop{\lim}\limits_{n\to\infty}k_{n}=\infty$ holds. Consequently $\mathop{\lim}\limits_{n\to\infty}d(x_n, p)=\infty,$ i.e., $\tilde x\in\tilde X_{\infty}.$ It follows from \eqref{e2.1} and \eqref{e2.2} that the inequality $k_n\le r_{n}^{\frac{1}{2}}$ holds for every $n\in\mathbb N.$ The last inequality and \eqref{e2.3} imply $\mathop{\lim}\limits_{n\to\infty}\frac{d(x_n, p)}{r_n}=0.$ Thus, $\tilde x\in Seq(X, \tilde r).$

(ii) Let $\tilde x = (x_n)_{n\in\mathbb N}\in\tilde X_{\infty}$ and let $p\in X$. Then $\lim_{n\to\infty} d(x_n, p) = \infty$ holds. Define a sequence $\tilde r=(r_n)_{n\in\mathbb N}$ as
\begin{equation*}\label{Func}
r_{n}:=\begin{cases}
         d(x_n, p), & \mbox{if} $ $ x_n \ne p\\
         1,& \mbox{if}$ $ x_n = p\\
         \end{cases}
\end{equation*} for $n\in\mathbb N$. From $\tilde x\in\tilde X_{\infty}$ it follows that $\mathop{\lim}\limits_{n\to\infty}r_{n}=\infty.$ Hence $\tilde r$ is a scaling sequence. It is clear that $\tilde{\tilde d}_{\tilde r}(\tilde x)=1$. Thus, $Seq(X, \tilde r)\ni \tilde x$.
\end{proof}

For every unbounded metric space $(X, d)$ and every scaling sequence $\tilde{r}$ define the subset $\tilde X_{\infty, \tilde r}^{0}$ of the set $Seq(X, \tilde r)$ by the rule:

\begin{equation}\label{e2.4}
\left((z_n)_{n\in\mathbb N}\in\tilde X_{\infty, \tilde r}^{0}\right) \Leftrightarrow \left((z_n)_{n\in\mathbb N} \in \tilde X_{\infty} \quad \mbox{and} \quad \lim_{n\to\infty} \frac{d(z_n, p)}{r_n} = 0\right),
\end{equation}
where $p$ is a point of $X$.

Below we collect together some basic properties of the set $\tilde X_{\infty, \tilde r}^{0}.$

\begin{proposition}\label{pr2.2}
Let $(X, d)$ be an unbounded metric space and let $\tilde r$ be a scaling sequence. Then the following statements hold.
\begin{enumerate}
\item[\rm(i)] The set $\tilde X_{\infty, \tilde r}^{0}$ is nonempty.
\item[\rm(ii)] If $\tilde z\in \tilde X_{\infty, \tilde r}^{0},$ $\tilde y\in\tilde X_{\infty}$ and $\tilde d_{\tilde r}(\tilde z, \tilde y)=0,$ then $\tilde y\in\tilde X_{\infty, \tilde r}^{0}$ holds.
\item[\rm(iii)] If $F\subseteq Seq(X, \tilde r)$ is self-stable, then $\tilde X_{\infty, \tilde r}^{0}\cup F$ is also a self-stable subset of $Seq(X, \tilde r).$
\item[\rm(iv)] The set $\tilde X_{\infty, \tilde r}^{0}$ is self-stable.
\item[\rm(v)] The inclusion $\tilde X_{\infty, \tilde r}^{0}\subseteq\tilde X_{\infty, \tilde r}$ holds for every maximal self-stable subset $\tilde X_{\infty, \tilde r}$ of $Seq(X, \tilde r).$
\item[\rm(vi)] Let $\tilde z\in\tilde X_{\infty,\tilde r}^{0}$ and $\tilde x\in\tilde X_{\infty}.$ Then $\tilde x\in Seq (X, \tilde r)$ if and only if $\tilde x$ and $\tilde z$ are mutually stable. For $\tilde x\in Seq (X, \tilde r)$ we have $$\tilde{\tilde d}_{\tilde r}(\tilde x)=\tilde d_{\tilde r}(\tilde x, \tilde z).$$
\item[\rm(vii)] Denote by $\mathbf{\Omega_{\infty, \tilde r}^{X}}$ the set of all pretangent to $X$ at infinity (with respect to $\tilde r$) spaces. Then the membership relation
\begin{equation*}
\tilde X_{\infty, \tilde r}^{0}\in \bigcap_{\Omega_{\infty, \tilde r}^{X}\in\mathbf{\Omega_{\infty, \tilde r}^{X}}}\Omega_{\infty, \tilde r}^{X}
\end{equation*}
holds.
\end{enumerate}
\end{proposition}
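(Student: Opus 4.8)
The plan is to prove the seven statements essentially in the order listed, since each later item leans on the earlier ones, and to isolate at the outset the single estimate that drives almost everything. Fix $p\in X$. For any $\tilde x=(x_n)_{n\in\mathbb N}\in\tilde X_{\infty}$ and any $\tilde z=(z_n)_{n\in\mathbb N}\in\tilde X_{\infty,\tilde r}^{0}$, the triangle inequality yields
$$
\left|\frac{d(x_n, z_n)}{r_n} - \frac{d(x_n, p)}{r_n}\right| \le \frac{d(z_n, p)}{r_n} \to 0,
$$
so that $\lim_{n\to\infty}\frac{d(x_n,z_n)}{r_n}$ exists finitely if and only if $\lim_{n\to\infty}\frac{d(x_n,p)}{r_n}$ does, and the two limits then coincide. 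This one observation will give (ii), (iii), (iv), and (vi) almost verbatim.

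For (i), I would simply note that the sequence $\tilde x$ built in the proof of Proposition~\ref{pr2.1}(i) already satisfies $\lim_{n\to\infty}\frac{d(x_n,p)}{r_n}=0$ together with $\tilde x\in\tilde X_\infty$, hence lies in $\tilde X_{\infty,\tilde r}^{0}$ by \eqref{e2.4}. For (ii), from $\tilde d_{\tilde r}(\tilde z,\tilde y)=0$ and $\frac{d(z_n,p)}{r_n}\to 0$, the bound $\frac{d(y_n,p)}{r_n}\le\frac{d(y_n,z_n)}{r_n}+\frac{d(z_n,p)}{r_n}$ forces $\frac{d(y_n,p)}{r_n}\to 0$, and since $\tilde y\in\tilde X_\infty$ by hypothesis, $\tilde y\in\tilde X_{\infty,\tilde r}^{0}$. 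For (iii), I would split a pair from $\tilde X_{\infty,\tilde r}^{0}\cup F$ into three cases: both in $F$ (mutually stable because $F$ is self-stable); both in $\tilde X_{\infty,\tilde r}^{0}$ (then $\frac{d(z_n,w_n)}{r_n}\le\frac{d(z_n,p)+d(p,w_n)}{r_n}\to 0$); and one in each set (covered directly by the displayed estimate). Part (iv) is just the case $F=\emptyset$ of (iii).

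Statement (v) is where maximality enters. Applying (iii) with $F=\tilde X_{\infty,\tilde r}$, the union $\tilde X_{\infty,\tilde r}^{0}\cup\tilde X_{\infty,\tilde r}$ is self-stable, so no element of $\tilde X_{\infty,\tilde r}$ fails to be mutually stable with a given $\tilde z\in\tilde X_{\infty,\tilde r}^{0}$; since $\tilde z\in Seq(X,\tilde r)$, the maximality clause of Definition~\ref{D1.2} then forces $\tilde z\in\tilde X_{\infty,\tilde r}$. For (vi), both the equivalence and the identity $\tilde{\tilde d}_{\tilde r}(\tilde x)=\tilde d_{\tilde r}(\tilde x,\tilde z)$ are read straight off the displayed estimate: mutual stability of $\tilde x$ and $\tilde z$ is the finiteness of $\lim\frac{d(x_n,z_n)}{r_n}$, which is equivalent to $\tilde x\in Seq(X,\tilde r)$, and the common value of the two limits is exactly $\tilde{\tilde d}_{\tilde r}(\tilde x)$.

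The real content is (vii), and the plan there is to show that $\tilde X_{\infty,\tilde r}^{0}$ is itself a single $\equiv$-equivalence class inside every maximal self-stable $\tilde X_{\infty,\tilde r}$, so that it is one of the points of every $\Omega_{\infty,\tilde r}^{X}$. By (v) we have $\tilde X_{\infty,\tilde r}^{0}\subseteq\tilde X_{\infty,\tilde r}$; fixing any $\tilde z\in\tilde X_{\infty,\tilde r}^{0}$, I would verify that its natural projection $\pi(\tilde z)=\{\tilde y\in\tilde X_{\infty,\tilde r}\colon\tilde d_{\tilde r}(\tilde z,\tilde y)=0\}$ from \eqref{e1.9} equals $\tilde X_{\infty,\tilde r}^{0}$: the inclusion $\supseteq$ is (iv), while $\subseteq$ is (ii), since any $\tilde y\in\tilde X_{\infty,\tilde r}\subseteq\tilde X_\infty$ with $\tilde d_{\tilde r}(\tilde z,\tilde y)=0$ lands in $\tilde X_{\infty,\tilde r}^{0}$. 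Hence $\tilde X_{\infty,\tilde r}^{0}=\pi(\tilde z)\in\Omega_{\infty,\tilde r}^{X}$ for each choice of $\tilde X_{\infty,\tilde r}$, and therefore for each pretangent space, which is precisely membership in the intersection. The main obstacle here is conceptual rather than computational: one must correctly interpret the claim, namely that a fixed subset of $Seq(X,\tilde r)$ is literally a common point of the a priori distinct metric identifications, and confirm that the equivalence class it determines is the same set independently of which maximal self-stable subset is chosen.
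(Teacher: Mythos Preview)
Your proposal is correct and follows essentially the same approach as the paper: the same triangle-inequality estimate drives (ii), (iii), (iv), and (vi); (i) reuses Proposition~\ref{pr2.1}(i); (v) comes from (iii) with $F=\tilde X_{\infty,\tilde r}$ plus maximality; and (vii) is assembled from (ii), (v), and the definition of pretangent spaces. Your write-up is in fact slightly more explicit than the paper's, especially in (vii), where the paper simply writes ``follows from (ii), (v) and the definition'' while you spell out why $\tilde X_{\infty,\tilde r}^{0}$ coincides with the equivalence class $\pi(\tilde z)$ in every $\tilde X_{\infty,\tilde r}$.
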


\begin{proof}
(i) It follows from the proof of statement (i) in Proposition~\ref{pr2.1}.

\medskip

(ii) To prove $\tilde y\in\tilde X_{\infty, \tilde r}^{0}$ note that
\begin{equation*}
0\le\limsup_{n\to\infty}\frac{d(y_n, p)}{r_n}\le \tilde d_{\tilde r}(\tilde z, \tilde y)+\tilde{\tilde d}_{\tilde r}(\tilde z)=0.
\end{equation*}

\medskip

(iii) Let $p \in X$ and let $F\subseteq Seq(X, \tilde r)$ be self-stable. It is clear that $\tilde{\tilde d}_{\tilde r}(\tilde y)$ exists for every $\tilde y \in F \cup \tilde X_{\infty, \tilde r}^{0}$. Hence $F \cup \tilde X_{\infty, \tilde r}^{0}$ is self-stable if and only if $\tilde{z}$ and $\tilde{x}$ are mutually stable for all $\tilde x, \tilde z\in F\cup\tilde X_{\infty, \tilde r}^{0}.$. If $\tilde z, \tilde x\in F,$ then $\tilde z$ and $\tilde x$ are mutually stable by the condition. Suppose $\tilde x\in F$ and $\tilde z\in\tilde X_{\infty, \tilde r}^{0}.$ The inequalities
\begin{equation}\label{e2.5}
d(x_n, p)-d(z_n, p) \le d(x_n, z_n) \le d(x_n, p)+d(z_n, p)
\end{equation}
and the equality
$$
\lim_{n\to\infty} \frac{d(z_n, p)}{r_n} = 0
$$
imply the existence of $\tilde{d}_{\tilde{r}}(\tilde{x}, \tilde{z})$. The case $\tilde x, \tilde z\in\tilde X_{\infty, \tilde r}^{0}$ is similar. Thus the set $F\cup\tilde X_{\infty, \tilde r}^{0}$ is self-stable.

\medskip

(iv) This follows from (iii) with $F=\varnothing$.

\medskip

(v) Using statement (iii) with $F=\tilde X_{\infty, \tilde r}$ we see that $\tilde X_{\infty, \tilde r}^{0}\cup\tilde X_{\infty, \tilde r}$ is self-stable. Since $\tilde X_{\infty, \tilde r}$ is maximal self-stable, the equality $\tilde X_{\infty, \tilde r}^{0}\cup\tilde X_{\infty, \tilde r}=\tilde X_{\infty, \tilde r}$ holds. Thus, $\tilde X_{\infty, \tilde r}^{0}\subseteq\tilde X_{\infty, \tilde r}.$

\medskip

(vi) Suppose $\tilde x$ and $\tilde z$ are mutually stable. Then using \eqref{e2.5} we obtain
\begin{equation*}
\limsup_{n\to\infty}\frac{d(x_n, p)}{r_n}\le \tilde d_{\tilde r}(\tilde x, \tilde z)\le\liminf_{n\to\infty}\frac{d(x_n, p)}{r_n}.
\end{equation*}
Hence $\tilde x\in Seq(X, \tilde r).$ Similarly, if $\tilde x\in Seq(X, \tilde r),$ then we have
\begin{equation*}
\limsup_{n\to\infty}\frac{d(x_n, z_n)}{r_n}\le\tilde{\tilde d}_{\tilde r}(\tilde x)\le\liminf_{n\to\infty}\frac{d(x_n, z_n)}{r_n}.
\end{equation*}
Consequently $\tilde x$ and $\tilde z$ are mutually stable and $\tilde{\tilde d}_{\tilde r}(\tilde x)=\tilde d_{\tilde r}(\tilde x, \tilde z)$ holds.

\medskip

(vii) It follows from (ii), (v) and the definition of pretangent spaces.
\end{proof}

\begin{remark}
The set $\tilde X_{\infty, \tilde r}^{0}$ is invariant under replacing of $p\in X$ by an arbitrary point $b\in X$ in \eqref{e2.4}.
\end{remark}

\begin{lemma}\label{lem1.7}
Let $(X, d)$ be an unbounded metric space, $p\in X$ and $\tilde y\in\tilde X_{\infty}$, let $\tilde r$ be a scaling sequence and let $\tilde X_{\infty, \tilde r}$ be a maximal self-stable set. If $\tilde y$ and $\tilde x$ are mutually stable for every $\tilde x\in\tilde X_{\infty, \tilde r}$, then $\tilde y\in\tilde X_{\infty, \tilde r}$.
\end{lemma}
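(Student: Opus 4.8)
The plan is to reduce the claim to membership in $Seq(X,\tilde r)$ and then invoke maximality. Indeed, by Definition~\ref{D1.2}, once we know both that $\tilde y\in Seq(X,\tilde r)$ and that $\tilde y$ is mutually stable with every $\tilde x\in\tilde X_{\infty,\tilde r}$, the dichotomy in the definition of a maximal self-stable set forces $\tilde y\in\tilde X_{\infty,\tilde r}$. So the entire difficulty collapses to verifying a single fact: that the finite limit $\lim_{n\to\infty}d(y_n,p)/r_n$ exists, i.e.\ that $\tilde y\in Seq(X,\tilde r)$.

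To produce this limit I would exploit the auxiliary set $\tilde X_{\infty,\tilde r}^{0}$. By Proposition~\ref{pr2.2}(i) it is nonempty, and by Proposition~\ref{pr2.2}(v) it is contained in $\tilde X_{\infty,\tilde r}$. Fix any $\tilde z\in\tilde X_{\infty,\tilde r}^{0}$; then $\tilde z\in\tilde X_{\infty,\tilde r}$ as well, so the hypothesis of the lemma guarantees that $\tilde y$ and $\tilde z$ are mutually stable.

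The final step is to apply Proposition~\ref{pr2.2}(vi) with this $\tilde z$ and with $\tilde x=\tilde y$. Since $\tilde y\in\tilde X_{\infty}$ by assumption and $\tilde y$ is mutually stable with $\tilde z\in\tilde X_{\infty,\tilde r}^{0}$, that statement yields exactly $\tilde y\in Seq(X,\tilde r)$. Combining this with the maximality argument of the first paragraph completes the proof.

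As for the main obstacle, there is no genuinely hard analytic step here, since all the delicate triangle-inequality estimates have already been absorbed into Proposition~\ref{pr2.2}. The one thing one must get right is the logical structure: recognizing that the hypothesis ``mutually stable with everything in $\tilde X_{\infty,\tilde r}$'' is not by itself enough to apply maximality (membership in $Seq(X,\tilde r)$ is a prerequisite for that), and that a single well-chosen element $\tilde z$ of $\tilde X_{\infty,\tilde r}^{0}$ supplies the missing membership via part (vi). The key idea is thus to use $\tilde X_{\infty,\tilde r}^{0}$ as a bridge between the ``infinite'' condition $\tilde y\in\tilde X_{\infty}$ and the ``rescaled finiteness'' condition $\tilde y\in Seq(X,\tilde r)$.
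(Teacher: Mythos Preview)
Your proof is correct and follows essentially the same route as the paper's own argument: reduce to showing $\tilde y\in Seq(X,\tilde r)$ and then invoke Proposition~\ref{pr2.2}(vi). The paper's proof is simply a terser version of what you wrote; you have merely made explicit the choice of $\tilde z\in\tilde X_{\infty,\tilde r}^{0}\subseteq\tilde X_{\infty,\tilde r}$ (via parts (i) and (v)) that is needed to actually apply (vi).
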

\begin{proof}
Suppose $\tilde y$ and $\tilde x$ are mutually stable for every $\tilde x\in\tilde X_{\infty, \tilde r}$. To prove $\tilde y\in\tilde X_{\infty, \tilde r}$ it suffices to show that there is a finite limit $\mathop{\lim}\limits_{n\to\infty}\frac{d(y_n, p)}{r_n}$ that follows from statement (vi) of Proposition~\ref{pr2.2}.
\end{proof}

\begin{lemma}\label{l2.7}
Let $(X, d)$ be an unbounded metric space and let $\tilde r$ be a scaling sequence. If $\tilde x$, $\tilde y$, $\tilde t \in \tilde X_{\infty}$ such that $\tilde x$ and $\tilde y$ are  mutually stable with respect to $\tilde r$ and $\tilde d_{\tilde r}(\tilde x, \tilde t)=0$, then $\tilde y$ and $\tilde t$ are  mutually stable with respect to $\tilde r$.
\end{lemma}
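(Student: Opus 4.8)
The plan is to reduce everything to a single application of the triangle inequality followed by a squeeze argument. The hypotheses supply two pieces of data: mutual stability of $\tilde x$ and $\tilde y$ guarantees that the finite limit $\tilde d_{\tilde r}(\tilde x, \tilde y) = \lim_{n\to\infty} d(x_n, y_n)/r_n$ exists, while $\tilde d_{\tilde r}(\tilde x, \tilde t) = 0$ means $\lim_{n\to\infty} d(x_n, t_n)/r_n = 0$. What must be shown is that $\lim_{n\to\infty} d(y_n, t_n)/r_n$ exists as a finite number, which is precisely the assertion that $\tilde y$ and $\tilde t$ are mutually stable with respect to $\tilde r$.

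First I would record the two-sided triangle inequality $|d(y_n, t_n) - d(y_n, x_n)| \le d(x_n, t_n)$, valid for every $n\in\mathbb N$. Dividing through by $r_n > 0$ gives
$$
\left| \frac{d(y_n, t_n)}{r_n} - \frac{d(y_n, x_n)}{r_n} \right| \le \frac{d(x_n, t_n)}{r_n}.
$$
I would then pass to the limit $n\to\infty$. The right-hand side tends to $0$ by the assumption $\tilde d_{\tilde r}(\tilde x, \tilde t)=0$, and $d(y_n, x_n)/r_n$ converges to the finite value $\tilde d_{\tilde r}(\tilde x, \tilde y)$ by the mutual stability of $\tilde x$ and $\tilde y$. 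Since the difference between $d(y_n, t_n)/r_n$ and a convergent sequence tends to zero, the sequence $d(y_n, t_n)/r_n$ converges to the same finite limit. Hence the limit \eqref{e1.1} defining $\tilde d_{\tilde r}(\tilde y, \tilde t)$ exists and equals $\tilde d_{\tilde r}(\tilde x, \tilde y)$, so $\tilde y$ and $\tilde t$ are mutually stable.

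I do not expect a genuine obstacle here; the argument is a direct estimate rather than anything requiring the maximality or self-stability machinery of the earlier propositions. The only point worth stating carefully is that the conclusion is an honest convergence statement and not merely a $\limsup$/$\liminf$ bound: this is legitimate because $d(y_n, x_n)/r_n$ already converges and the perturbation $d(y_n, t_n)/r_n - d(y_n, x_n)/r_n$ vanishes in the limit. The membership of $\tilde x$, $\tilde y$, $\tilde t$ in $\tilde X_\infty$ is not used in the estimate itself; it serves only to situate the statement in the intended framework.
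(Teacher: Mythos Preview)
Your proof is correct and follows essentially the same approach as the paper: both arguments use the triangle inequality to squeeze $d(y_n,t_n)/r_n$ between $d(x_n,y_n)/r_n \pm d(x_n,t_n)/r_n$, then invoke $\tilde d_{\tilde r}(\tilde x,\tilde t)=0$ to force convergence to $\tilde d_{\tilde r}(\tilde x,\tilde y)$. The paper states the bound directly in terms of $\liminf$ and $\limsup$, whereas you phrase it as ``convergent sequence plus vanishing perturbation,'' but these are the same estimate.
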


\begin{proof}
The statement follows from the equality $\tilde d_{r}(\tilde x, \tilde t)=0$ and the inequalities
\begin{multline*}
\tilde d_{\tilde r}(\tilde x, \tilde y) - \tilde d_{\tilde r}(\tilde x, \tilde t) \leq \liminf_{n\to \infty} \frac{d(y_n,t_n)}{r_n} \\
\leq \limsup_{n\to \infty} \frac{d(y_n,t_n)}{r_n} \leq \tilde d_{\tilde r}(\tilde x, \tilde y) + \tilde d_{\tilde r}(\tilde x, \tilde t).
\end{multline*}
\end{proof}

The set $\tilde X_{\infty, \tilde r}^{0}$ is a common distinguished point of all pretangent spaces $\Omega_{\infty,\tilde{r}}^X$ (with given scaling sequence $\tilde{r}$). We will consider the pretangent spaces to $(X,d)$ at infinity as the triples $(\Omega_{\infty,\tilde{r}}^X, \rho, \nu_{0})$, where $\rho$ is defined by~\eqref{e1.5} and $\nu_{0}: = \tilde X_{\infty, \tilde r}^{0}$. The point $\nu_{0}$ can be informally described as follows. The points of pretangent space $\Omega_{\infty, \tilde r}^{X}$ are infinitely removed from the initial space $(X, d),$ but $\Omega_{\infty, \tilde r}^{X}$ contains a unique point $\nu_{0}$ which is close to $(X, d)$ as much as possible.

Let $(X,d)$ be an unbounded metric space and let $\tilde{r}$ be a scaling sequence. Write
$\bar{\Omega}_{\tilde{r}, \infty}^X$ for the set of the equivalence classes generated by the relation $\equiv$ on the set $Seq(X, \tilde r)$ (see \eqref{e1.4}).
Let us consider the simple graph $G_{X, \tilde r}$ consisting of the vertex set $V(G_{X, \tilde r}) := \bar{\Omega}_{\tilde{r}, \infty}^X$ and the edge set $E(G_{X, \tilde r})$ defined by the rule:
$$
u \text{ and } v \text{ are adjecent}\quad \text{if and only if} \quad u\ne v\quad\text{and}
$$
$$
\quad \text{the limit} \quad \lim_{n\to\infty} \frac{d(x_n, y_n)}{r_n} \quad \text{exists for} \quad \tilde x\in u \quad \text{and} \quad \tilde y\in v.
$$
Recall that a \emph{clique} in a graph $G = (V,E)$ is a set $C \subseteq V$ such that every two distinct vertices of $C$ are adjacent. A \emph{maximal clique} is a clique $C_1$ such that the inclusion
$$
V(C_1) \subseteq V(C)
$$
implies the equality $V(C_1) = V(C)$ for every clique $C$ in $G$.

\begin{theorem}\label{t2.12}
Let $(X,d)$ be an unbounded metric space and let $\tilde{r}$ be a scaling sequence. A set $C \subseteq \bar{\Omega}_{\tilde{r}, \infty}^X$ is a maximal clique in $G_{X, \tilde r}$ if and only if there is a pretangent spaces $\Omega_{\infty, \tilde r}^{X}$ such that $C=\Omega_{\infty, \tilde r}^{X}.$
\end{theorem}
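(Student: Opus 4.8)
The plan is to realize the points of every pretangent space and the vertices of every maximal clique as $\equiv$-classes inside $\bar{\Omega}_{\tilde r, \infty}^X$, and then to match the two maximality notions through the quotient map. Write $[\tilde x]$ for the $\equiv$-class of $\tilde x\in Seq(X, \tilde r)$, so that $\bar{\Omega}_{\tilde r, \infty}^X=\{[\tilde x]:\tilde x\in Seq(X, \tilde r)\}$. Before the two directions I would record two preliminary facts. First, by Lemma~\ref{l2.7} the adjacency relation in $G_{X, \tilde r}$ does not depend on the chosen representatives, so ``$[\tilde x]$ and $[\tilde y]$ are adjacent'' is equivalent to ``$[\tilde x]\ne[\tilde y]$ and $\tilde x, \tilde y$ are mutually stable''. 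Second, every maximal self-stable set $\tilde X_{\infty, \tilde r}$ is a union of full $\equiv$-classes: if $\tilde x\in\tilde X_{\infty, \tilde r}$ and $\tilde d_{\tilde r}(\tilde x, \tilde y)=0$ with $\tilde y\in Seq(X, \tilde r)$, then Lemma~\ref{l2.7} makes $\tilde y$ mutually stable with every element of $\tilde X_{\infty, \tilde r}$, whence $\tilde y\in\tilde X_{\infty, \tilde r}$ by Lemma~\ref{lem1.7}. Consequently each point of $\Omega_{\infty, \tilde r}^{X}$ is genuinely an element of $\bar{\Omega}_{\tilde r, \infty}^X$ and $\Omega_{\infty, \tilde r}^{X}=\{[\tilde x]:\tilde x\in\tilde X_{\infty, \tilde r}\}$ as subsets of $\bar{\Omega}_{\tilde r, \infty}^X$.

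For the implication ``pretangent space $\Rightarrow$ maximal clique'' I take a pretangent space $\Omega_{\infty, \tilde r}^{X}$ arising from $\tilde X_{\infty, \tilde r}$. Two distinct classes $[\tilde x], [\tilde y]$ with $\tilde x, \tilde y\in\tilde X_{\infty, \tilde r}$ are mutually stable by self-stability, hence adjacent, so $\Omega_{\infty, \tilde r}^{X}$ is a clique. For maximality I would take any clique $C'\supseteq\Omega_{\infty, \tilde r}^{X}$ and any $[\tilde y]\in C'$ with $\tilde y\in Seq(X, \tilde r)\subseteq\tilde X_{\infty}$. For each $\tilde x\in\tilde X_{\infty, \tilde r}$ the classes $[\tilde x]$ and $[\tilde y]$ are either equal or adjacent, and in both cases $\tilde x, \tilde y$ are mutually stable; Lemma~\ref{lem1.7} then gives $\tilde y\in\tilde X_{\infty, \tilde r}$, i.e. $[\tilde y]\in\Omega_{\infty, \tilde r}^{X}$. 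Thus $C'=\Omega_{\infty, \tilde r}^{X}$ and the clique is maximal.

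For the converse I start from a maximal clique $C$ and set $F:=\bigcup_{u\in C}u\subseteq Seq(X, \tilde r)$, the union of the classes forming $C$. Self-stability of $F$ is immediate: two elements in the same class are at $\tilde d_{\tilde r}$-distance $0$, and two elements in distinct classes are mutually stable because those classes are adjacent in $C$. To see that $F$ is maximal self-stable I take $\tilde y\in Seq(X, \tilde r)$ mutually stable with every element of $F$; then $[\tilde y]$ is, for each $u\in C$, either equal to $u$ or adjacent to it, so $C\cup\{[\tilde y]\}$ is again a clique, whence $[\tilde y]\in C$ by maximality of $C$ and therefore $\tilde y\in F$. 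Hence $F$ is a maximal self-stable subset $\tilde X_{\infty, \tilde r}$, and the associated pretangent space is exactly $\{[\tilde x]:\tilde x\in F\}=C$.

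The routine but essential verifications are the two preliminary facts; the genuine content is already packaged in Lemmas~\ref{lem1.7} and~\ref{l2.7}, which convert ``mutually stable with everybody'' into ``belongs to the maximal self-stable set'' and guarantee that adjacency is representative-independent. I expect the main obstacle to be purely organizational: one must keep clearly separate the $\equiv$-classes taken inside $\tilde X_{\infty, \tilde r}$ (the literal points of $\Omega_{\infty, \tilde r}^{X}$) from the $\equiv$-classes taken inside all of $Seq(X, \tilde r)$ (the vertices of $G_{X, \tilde r}$), and it is precisely the saturation remark that makes these coincide, so that the equality $C=\Omega_{\infty, \tilde r}^{X}$ in the statement holds literally rather than merely up to identification.
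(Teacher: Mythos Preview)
Your proof is correct and follows essentially the same route as the paper: the paper compresses your ``saturation'' preliminary into the single equality $\{\tilde x\in\tilde X_{\infty,\tilde r}:\tilde d_{\tilde r}(\tilde x,\tilde y)=0\}=\{\tilde x\in Seq(X,\tilde r):\tilde d_{\tilde r}(\tilde x,\tilde y)=0\}$, deduced from Lemmas~\ref{lem1.7} and~\ref{l2.7}, and then declares that the correspondence between maximal cliques and pretangent spaces ``follows from the definitions''. You have simply unpacked that last sentence into the two explicit directions, which is a genuine service, since the identification of points of $\Omega_{\infty,\tilde r}^X$ with vertices of $G_{X,\tilde r}$ is exactly the point that needs care.
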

\begin{proof}
Lemma~\ref{lem1.7} and Lemma~\ref{l2.7} imply the equality
\begin{equation}\label{e2.9}
\{\tilde x\in\tilde X_{\infty, \tilde r}:\tilde d_{\tilde r}(\tilde x, \tilde y)=0\}=\{\tilde x\in Seq(X, \tilde r): \tilde d_{\tilde r}(\tilde x, \tilde y)=0\}
\end{equation}
for every $\tilde y\in\tilde X_{\infty, \tilde r}$ and every $\tilde X_{\infty, \tilde r}.$ Since, for every $\tilde y\in Seq(X, \tilde r),$ there is $\tilde X_{\infty, \tilde r}$ such that $\tilde X_{\infty, \tilde r}\ni\tilde y,$ equality \eqref{e2.9} implies
\begin{equation}\label{e2.10}
\bar{\Omega}_{\infty, \tilde r}^{X}=\bigcup_{\Omega_{\infty, \tilde r}^{X} \in \mathbf{\Omega_{\infty, \tilde r}^{X}}}\Omega_{\infty, \tilde r}^{X},
\end{equation}
where $\mathbf{\Omega_{\infty, \tilde r}^{X}}$ is the set of all spaces which are pretangent to $X$ at infinity with respect to $\tilde r.$ Now the theorem follows from the definitions of the pretangent spaces and the maximal cliques.
\end{proof}

Theorem~\ref{t2.12} gives some grounds for calling the graph $G_{X, \tilde r}$ a \emph{net of pretangent spaces}.

In the next proposition we follow terminology used in \cite{BM}.
Recall only that a vertex $v$ of a graph $G=(V, E)$ is a \emph{dominating vertex} if $\{u, v\}\in E$ for all $u\in V\setminus\{v\}.$

\begin{proposition}\label{Pr2.8}
Let $(X, d)$ be an unbounded metric space and let $\tilde r$ be a scaling sequence. Then the following statements hold.
\begin{enumerate}
\item[\rm(i)] The vertex $\nu_{0}=\tilde X_{\infty, \tilde r}^{0}$ is a dominating vertex of the graph $G_{X, \tilde r}$.
\item[\rm(ii)] $G_{X, \tilde r}$ is complete if and only if there is a unique pretangent space $\Omega_{\infty, \tilde r}^{X}$.
\item[\rm(iii)] $G_{X, \tilde r}$ is a star if and only if
$$
\sup\{\left|\Omega_{\infty, \tilde r}^{X}\right|: \Omega_{\infty, \tilde r}^{X}\in\mathbf{\Omega_{\infty, \tilde r}^{X}}\}=2,
$$
where $\left|\Omega_{\infty, \tilde r}^{X}\right|$ is the cardinal number of $\Omega_{\infty, \tilde r}^{X}$.
\end{enumerate}
\end{proposition}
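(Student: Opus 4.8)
The plan is to read off all three statements from Theorem~\ref{t2.12}, which identifies the maximal cliques of $G_{X,\tilde r}$ with the pretangent spaces, together with two facts about $\nu_0=\tilde X_{\infty,\tilde r}^{0}$ that are already available. First, $\nu_0$ is a \emph{single} vertex of $G_{X,\tilde r}$: its members are pairwise at $\tilde d_{\tilde r}$-distance $0$ by the triangle inequality and \eqref{e2.4}, and by Proposition~\ref{pr2.2}(ii) the corresponding equivalence class does not extend beyond $\tilde X_{\infty,\tilde r}^{0}$. Second, by Proposition~\ref{pr2.2}(vii) this vertex lies in every pretangent space, hence in every maximal clique. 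I would record these two observations before treating the three parts.

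For (i) I would take an arbitrary vertex $u\ne\nu_0$, choose $\tilde x\in u\subseteq Seq(X,\tilde r)$ and any $\tilde z\in\nu_0=\tilde X_{\infty,\tilde r}^{0}$. Proposition~\ref{pr2.2}(vi) asserts precisely that such $\tilde x$ and $\tilde z$ are mutually stable, so the limit $\lim_{n\to\infty}d(x_n,z_n)/r_n$ exists and $u$ is adjacent to $\nu_0$. Since $u$ was arbitrary, $\nu_0$ is a dominating vertex.

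For (ii) I would invoke the elementary graph fact that a simple graph is complete if and only if its whole vertex set is a clique, and that the vertex set is a clique exactly when the graph possesses a unique maximal clique: every one-element set is a clique and extends to a maximal one, so a unique maximal clique must equal the whole vertex set, and conversely if $V$ is a clique it is the only maximal clique. By Theorem~\ref{t2.12} the maximal cliques are exactly the pretangent spaces, so ``unique maximal clique'' translates to ``unique $\Omega_{\infty,\tilde r}^{X}$'', which yields the stated equivalence.

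For (iii) the forward direction is routine: in a star the maximal cliques are precisely the edges, each of cardinality $2$, so by Theorem~\ref{t2.12} every $\Omega_{\infty,\tilde r}^{X}$ has exactly two points and the supremum is $2$. The converse is the substantive step, and I expect it to be the main obstacle. Assuming $\sup|\Omega_{\infty,\tilde r}^{X}|=2$, every maximal clique has at most two vertices and at least one has exactly two; since $\nu_0$ belongs to every maximal clique, that size-$2$ clique is an edge $\{\nu_0,v\}$, so a leaf exists. The heart of the argument is to forbid triangles: if some $u,v\ne\nu_0$ were adjacent, then because $\nu_0$ is dominating by part~(i) the set $\{\nu_0,u,v\}$ would be a clique, hence contained in a maximal clique of size at least $3$, contradicting $\sup|\Omega_{\infty,\tilde r}^{X}|=2$. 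Thus no two vertices distinct from $\nu_0$ are adjacent, while $\nu_0$ is adjacent to all of them, so $G_{X,\tilde r}$ is a star centred at $\nu_0$. The delicate points are combining the dominating-vertex property with the clique--pretangent correspondence to rule out triangles, and confirming the existence of a leaf so that the graph is genuinely a star and not a single vertex.
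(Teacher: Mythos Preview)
Your proof is correct and follows the same line the paper intends: the paper merely says ``the proof is simple'' and points to Proposition~\ref{pr2.2}(vii) for part~(i), leaving (ii) and~(iii) implicit via the clique--pretangent correspondence of Theorem~\ref{t2.12}, which is exactly the machinery you unpack. The only cosmetic difference is that for~(i) you cite Proposition~\ref{pr2.2}(vi) directly rather than going through~(vii) and Theorem~\ref{t2.12}; both routes are valid and yours is arguably more direct.
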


The proof is simple. Note only that (i) follows from statement (vii) of Proposition~\ref{pr2.2}.

\section{Completeness of pretangent spaces}

It is well know that the Gromov--Hausdorff limits and the asymptotic cones of metric spaces are always complete. The quasi-metrics on the boundaries of hyperbolic spaces are also complete (see, for example, Proposition~6.1 in~\cite{Sc}). The goal of this section is to show that every pretangent space is complete. For the proof of this fact we shall use the following lemmas.

\begin{lemma}\label{l3.1}
Let $(X, d)$ be an unbounded metric space, $\tilde r$ be a scaling sequence, $\tilde X_{\infty, \tilde r}$ be maximal self-stable, $\tilde x \in\tilde X_{\infty}$ and let $(\tilde\gamma^{m})_{m\in\mathbb N} \subset\tilde X_{\infty, \tilde r}$ such that $\tilde\gamma^{m}$ and $\tilde x$ are mutually stable for every $m\in\mathbb N$ and let
\begin{equation}\label{e3.1}
\lim_{m\to\infty}\tilde d(\tilde x, \tilde\gamma^{m})=0.
\end{equation}
Then $\tilde x$ belongs to $\tilde X_{\infty, \tilde r}$.
\end{lemma}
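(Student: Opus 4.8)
The plan is to reduce the claim to a mutual-stability statement and then invoke Lemma~\ref{lem1.7}. Since $\tilde x\in\tilde X_{\infty}$ by hypothesis, Lemma~\ref{lem1.7} guarantees $\tilde x\in\tilde X_{\infty,\tilde r}$ as soon as we check that $\tilde x$ is mutually stable with \emph{every} $\tilde z\in\tilde X_{\infty,\tilde r}$. So I would fix an arbitrary $\tilde z=(z_n)_{n\in\mathbb N}\in\tilde X_{\infty,\tilde r}$, write $\tilde x=(x_n)_{n\in\mathbb N}$ and $\tilde\gamma^{m}=(\gamma^{m}_n)_{n\in\mathbb N}$, and aim to produce the finite limit $\lim_{n\to\infty}d(x_n,z_n)/r_n$.

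The point is that two families of finite limits are already available. First, by hypothesis $\tilde x$ and $\tilde\gamma^{m}$ are mutually stable, so $\tilde d(\tilde x,\tilde\gamma^{m})=\lim_{n\to\infty}d(x_n,\gamma^{m}_n)/r_n$ exists for each $m$. Second, both $\tilde\gamma^{m}$ and $\tilde z$ lie in the self-stable set $\tilde X_{\infty,\tilde r}$, hence are mutually stable, so $\tilde d(\tilde\gamma^{m},\tilde z)=\lim_{n\to\infty}d(\gamma^{m}_n,z_n)/r_n$ exists for each $m$. I would then run a Cauchy-type estimate off the triangle inequality $|d(x_n,z_n)-d(\gamma^{m}_n,z_n)|\le d(x_n,\gamma^{m}_n)$: dividing by $r_n$ and passing separately to $\limsup_n$ and $\liminf_n$ (legitimate precisely because the two $\gamma^{m}$-limits exist) yields, for every fixed $m$,
$$
\limsup_{n\to\infty}\frac{d(x_n,z_n)}{r_n}-\liminf_{n\to\infty}\frac{d(x_n,z_n)}{r_n}\le 2\,\tilde d(\tilde x,\tilde\gamma^{m}).
$$

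To conclude I would let $m\to\infty$ and apply \eqref{e3.1}, which forces the right-hand side, and hence the left-hand side, to vanish; thus the $\limsup$ and the $\liminf$ coincide and $\lim_{n\to\infty}d(x_n,z_n)/r_n$ exists. Finiteness is immediate from the one-sided bound $\limsup_{n\to\infty}d(x_n,z_n)/r_n\le\tilde d(\tilde\gamma^{m},\tilde z)+\tilde d(\tilde x,\tilde\gamma^{m})$ read off for a single value of $m$. Therefore $\tilde x$ and $\tilde z$ are mutually stable, and since $\tilde z\in\tilde X_{\infty,\tilde r}$ was arbitrary, Lemma~\ref{lem1.7} gives $\tilde x\in\tilde X_{\infty,\tilde r}$.

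I do not anticipate a genuine obstacle; the only delicate point is the bookkeeping in the displayed inequality. Because the limit of $d(x_n,z_n)/r_n$ is not yet known to exist, one must carry both the $\limsup$ and the $\liminf$ through the estimate rather than a single limit, and it is the factor $2\,\tilde d(\tilde x,\tilde\gamma^{m})$ together with \eqref{e3.1} that closes the gap between them.
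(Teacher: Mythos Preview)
Your argument is correct and follows essentially the same route as the paper: reduce via Lemma~\ref{lem1.7} to showing mutual stability of $\tilde x$ with an arbitrary element of $\tilde X_{\infty,\tilde r}$, then squeeze the $\limsup$ and $\liminf$ of $d(x_n,z_n)/r_n$ together using the triangle inequality through $\tilde\gamma^{m}$ and the hypothesis~\eqref{e3.1}. The only cosmetic difference is that the paper first identifies the prospective limit as $\lim_{m\to\infty}\tilde d(\tilde z,\tilde\gamma^{m})$ (after checking this is Cauchy) and then shows both $\limsup$ and $\liminf$ equal it, whereas you bound the gap $\limsup-\liminf$ directly by $2\,\tilde d(\tilde x,\tilde\gamma^{m})$; your version is slightly more economical.
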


\begin{proof}
By Lemma~\ref{lem1.7} $\tilde x\in\tilde X_{\infty, \tilde r}$ if and only if for every $\tilde y\in\tilde X_{\infty, \tilde r}$ there is a finite limit
\begin{equation}\label{e3.2}
\lim_{n\to\infty}\frac{d(x_n, y_n)}{r_n}.
\end{equation}
Let $\tilde y\in\tilde X_{\infty, \tilde r}$. It follows from the triangle inequality for $\tilde d$ that
\begin{equation}\label{e3.3}
|\tilde d(\tilde y, \tilde\gamma^{m_1})-\tilde d(\tilde y, \tilde\gamma^{m_2})|\le\tilde d(\tilde\gamma^{m_1}, \tilde\gamma^{m_2})\le\tilde d(\tilde x, \tilde\gamma^{m_1})+\tilde d(\tilde x, \tilde\gamma^{m_2})
\end{equation}
for all $m_1, m_2\in\mathbb N.$ Now \eqref{e3.1} and \eqref{e3.3} imply that $(\tilde d(\tilde y, \tilde\gamma^{m}))_{m\in\mathbb N}$ is a Cauchy sequence in $\mathbb R.$ Consequently, there is a finite limit $\mathop{\lim}\limits_{m\to\infty}\tilde d(\tilde y, \tilde\gamma^{m}).$ We claim that limit \eqref{e3.2} exists and
\begin{equation}\label{e3.4}
\lim_{n\to\infty}\frac{d(x_n, y_n)}{r_n}=\lim_{m\to\infty}\tilde d(\tilde y, \tilde\gamma^{m}).
\end{equation}
This statement holds if and only if
\begin{equation}\label{e3.5}
\limsup_{n\to\infty}\frac{d(x_n, y_n)}{r_n}=\lim_{m\to\infty}\tilde d(\tilde y, \tilde\gamma^{m})
\end{equation}
and
\begin{equation}\label{e3.6}
\liminf_{n\to\infty}\frac{d(x_n, y_n)}{r_n}=\lim_{m\to\infty}\tilde d(\tilde y, \tilde\gamma^{m}).
\end{equation}
Equality \eqref{e3.5} holds if and only if
\begin{equation*}
\lim_{m\to\infty}\left|\tilde d(\tilde y, \tilde\gamma^{m})-\limsup_{n\to\infty}\frac{d(x_n, y_n)}{r_n}\right|=0.
\end{equation*}
It is clear that
\begin{equation*}
\lim_{m\to\infty}\left|\tilde d(\tilde y, \tilde\gamma^{m})-\limsup_{n\to\infty}\frac{d(x_n, y_n)}{r_n})\right|
=\lim_{m\to\infty}\limsup_{n\to\infty}\left|\frac{d(y_n, \gamma_{n}^{m})}{r_n}-\frac{d(x_n, y_n)}{r_n}\right|
\end{equation*}
\begin{equation*}
\le\lim_{m\to\infty}\limsup_{n\to\infty}\frac{d(\gamma_{n}^{m}, x_n)}{r_n}=\lim_{m\to\infty}\tilde d(\tilde\gamma^{m}, \tilde x)=0,
\end{equation*}
where $(\gamma_{n}^{m})_{n\in\mathbb N}=\tilde\gamma^{m}$. Equality \eqref{e3.5} follows. Equality \eqref{e3.6} can be proved similarly.
\end{proof}

\begin{lemma}\label{l3.2}
Let $(X, d)$ be an unbounded metric space and let $\tilde r=(r_n)_{n\in\mathbb N}$ be a scaling sequence. Then for every $\tilde x=(x_n)_{n\in\mathbb N}\subset X$ there is $\tilde y=(y_n)_{n\in\mathbb N}\in\tilde X_{\infty}$ such that $\mathop{\lim}\limits_{n\to\infty}\frac{d(x_n, y_n)}{r_n}=0.$
\end{lemma}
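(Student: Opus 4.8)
The plan is to reuse the ``slow-escape'' sequence produced in the proof of Proposition~\ref{pr2.1}(i) as a filler and graft the given sequence $\tilde x$ onto it. Fix $p\in X$. By Proposition~\ref{pr2.2}(i) the set $\tilde X_{\infty,\tilde r}^{0}$ is nonempty, so I can choose $\tilde w=(w_n)_{n\in\mathbb N}\in\tilde X_{\infty}$ with
\[
\lim_{n\to\infty}d(w_n,p)=\infty
\quad\text{and}\quad
\lim_{n\to\infty}\frac{d(w_n,p)}{r_n}=0;
\]
such a $\tilde w$ is exactly an element of $\tilde X_{\infty,\tilde r}^{0}$, equivalently the sequence built in the proof of Proposition~\ref{pr2.1}(i) via the balls $\overline{B}(p,r_n^{1/2})$.

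Next I would define the candidate $\tilde y=(y_n)_{n\in\mathbb N}$ by a pointwise ``keep whichever term is farther from $p$'' rule:
\[
y_n:=\begin{cases}
x_n & \text{if } d(x_n,p)\ge d(w_n,p),\\[2pt]
w_n & \text{if } d(x_n,p)< d(w_n,p).
\end{cases}
\]
By construction $d(y_n,p)\ge d(w_n,p)$ for every $n$, and since $d(w_n,p)\to\infty$ this forces $d(y_n,p)\to\infty$, i.e. $\tilde y\in\tilde X_{\infty}$. This settles the membership requirement regardless of the behavior of $\tilde x$.

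It then remains to check $d(x_n,y_n)/r_n\to 0$. On the indices where $y_n=x_n$ the numerator vanishes; on the indices where $y_n=w_n$ one has $d(x_n,p)<d(w_n,p)$, so the triangle inequality gives
\[
d(x_n,y_n)=d(x_n,w_n)\le d(x_n,p)+d(w_n,p)<2\,d(w_n,p).
\]
Hence $0\le d(x_n,y_n)\le 2\,d(w_n,p)$ for all $n$, so $0\le d(x_n,y_n)/r_n\le 2\,d(w_n,p)/r_n$, and the right-hand side tends to $0$ by the choice of $\tilde w$.

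The only delicate point is that $\tilde x$ is an \emph{arbitrary} sequence in $X$: it need not escape to infinity, and its terms may even coincide with $p$, so one cannot simply take $\tilde y=\tilde x$. The role of $\tilde w$ is precisely to supply, at each index, a point that is simultaneously far from $p$ (guaranteeing $\tilde y\in\tilde X_{\infty}$) and negligible after division by $r_n$ (keeping $\tilde y$ within rescaled distance $0$ of $\tilde x$); the max-type splicing transfers both properties to $\tilde y$. I expect the verification to be entirely routine once $\tilde w$ is in hand, so the substantive observation is that the slow-escape sequence of Proposition~\ref{pr2.1} can be recycled as the required filler.
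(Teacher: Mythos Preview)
Your proof is correct and essentially identical to the paper's own argument: the paper also picks $\tilde z\in\tilde X_{\infty,\tilde r}^{0}$, defines $y_n$ by the same ``keep the farther point'' rule, and derives the same bound $0\le d(x_n,y_n)\le 2\,d(z_n,p)$.
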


\begin{proof}
Let $\tilde z=(z_n)_{n\in\mathbb N}\in\tilde X_{\infty, \tilde r}^{0}$ and let $p\in X$. For every $\tilde{x} \subset X$ define a sequence $\tilde{y} = (y_n)_{n\in\mathbb N} \subset X$ by the rule
\begin{equation}\label{e3.7}
y_{n} := \begin{cases}
x_n, & \mbox{if } d(x_n, p) \ge d(z_n, p)\\
z_n, & \mbox{if } d(x_n, p) < d(z_n, p). \\
\end{cases}
\end{equation}
It follows from \eqref{e3.7} that the inequality
\begin{equation}\label{e3.8}
d(y_n, p)\ge d(z_n, p)
\end{equation}
holds for every $n\in\mathbb N.$ Since we have
$\tilde z\in\tilde X_{\infty, \tilde r}^{0}\subset\tilde X_{\infty},$
inequality \eqref{e3.8} implies $\tilde y=(y_n)_{n\in\mathbb N}\in\tilde X_{\infty}.$ Moreover,
from \eqref{e3.8} we also have
\begin{equation}\label{e3.9}
0\le d(x_n, y_n)\le 2d(z_n, p).
\end{equation}
The equality
$$
\lim_{n\to\infty}\frac{d(x_n, y_n)}{r_n}=0
$$
follows from \eqref{e3.9} and $\lim\limits_{n\to\infty}\frac{d(z_n, p)}{r_n}=0$.
\end{proof}

\begin{theorem}\label{th3.3}
Let $(X, d)$ be an unbounded metric space. Then all pretngent spaces to $(X, d)$ at infinity are complete.
\end{theorem}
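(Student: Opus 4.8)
The plan is to reduce the theorem to the convergence of Cauchy sequences and then manufacture a limit point by a diagonal construction that is certified to lie in $\tilde X_{\infty, \tilde r}$ through Lemma~\ref{l3.1}. So I would fix a maximal self-stable set $\tilde X_{\infty, \tilde r}$, a point $p \in X$, and a $\rho$-Cauchy sequence $(\alpha^{m})_{m\in\mathbb N}$ in $\Omega_{\infty,\tilde r}^{X}$, and choose representatives $\tilde\gamma^{m} = (\gamma_{n}^{m})_{n\in\mathbb N} \in \alpha^{m}$, so that each $\tilde\gamma^{m} \in \tilde X_{\infty, \tilde r}$. Since $\rho(\alpha^{m_1}, \alpha^{m_2}) = \tilde d(\tilde\gamma^{m_1}, \tilde\gamma^{m_2})$ by \eqref{e1.5}, the family $(\tilde\gamma^{m})_{m\in\mathbb N}$ is Cauchy for $\tilde d$. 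For each fixed $i$, the triangle inequality then makes $(\tilde d(\tilde\gamma^{m}, \tilde\gamma^{i}))_{m\in\mathbb N}$ a Cauchy sequence of reals, so it has a finite limit $L_{i} := \lim_{m\to\infty} \tilde d(\tilde\gamma^{m}, \tilde\gamma^{i})$, and the Cauchy property also gives $\lim_{i\to\infty} L_{i} = 0$. The candidate limit will be the class of a diagonal sequence $\tilde x$ whose $\tilde d$-distance to $\tilde\gamma^{i}$ turns out to be exactly $L_{i}$.

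The heart of the argument is the choice of block boundaries. Because there are only finitely many pairs $(i,j)$ with $i,j \le m$, and each limit $\lim_{n\to\infty} \frac{d(\gamma_{n}^{i}, \gamma_{n}^{j})}{r_n} = \tilde d(\tilde\gamma^{i}, \tilde\gamma^{j})$ exists by mutual stability inside $\tilde X_{\infty, \tilde r}$, and because $\lim_{n\to\infty} d(\gamma_{n}^{m}, p) = \infty$, I would pick a strictly increasing sequence $(n_m)_{m\in\mathbb N}$ so that for all $n \ge n_m$
\begin{equation*}
\left| \frac{d(\gamma_{n}^{i}, \gamma_{n}^{j})}{r_n} - \tilde d(\tilde\gamma^{i}, \tilde\gamma^{j}) \right| < 2^{-m} \quad \text{for all } i,j \le m, \qquad d(\gamma_{n}^{m}, p) > m .
\end{equation*}
Then set $x_n := \gamma_{n}^{m}$ for $n_m \le n < n_{m+1}$ (and $x_n := \gamma_{n}^{1}$ for $n < n_1$).

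The verification splits into three checks. The second condition forces $d(x_n, p) > m$ on the $m$-th block, whence $\lim_{n\to\infty} d(x_n, p) = \infty$ and $\tilde x \in \tilde X_{\infty}$. For mutual stability, fix $i$; on a block $[n_m, n_{m+1})$ with $m \ge i$ we have $x_n = \gamma_{n}^{m}$ and
\begin{equation*}
\left| \frac{d(x_n, \gamma_{n}^{i})}{r_n} - L_{i} \right| \le \left| \frac{d(\gamma_{n}^{m}, \gamma_{n}^{i})}{r_n} - \tilde d(\tilde\gamma^{m}, \tilde\gamma^{i}) \right| + \left| \tilde d(\tilde\gamma^{m}, \tilde\gamma^{i}) - L_{i} \right| < 2^{-m} + \left| \tilde d(\tilde\gamma^{m}, \tilde\gamma^{i}) - L_{i} \right|,
\end{equation*}
and as $n \to \infty$ one has $m \to \infty$, so both summands vanish; hence $\lim_{n\to\infty} \frac{d(x_n, \gamma_{n}^{i})}{r_n} = L_{i}$, i.e. $\tilde x$ and $\tilde\gamma^{i}$ are mutually stable with $\tilde d(\tilde x, \tilde\gamma^{i}) = L_{i}$. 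Since $L_{i} \to 0$, I get $\lim_{i\to\infty} \tilde d(\tilde x, \tilde\gamma^{i}) = 0$. At this point Lemma~\ref{l3.1} applies verbatim and delivers $\tilde x \in \tilde X_{\infty, \tilde r}$; putting $\alpha := \pi(\tilde x)$ yields $\rho(\alpha^{m}, \alpha) = \tilde d(\tilde\gamma^{m}, \tilde x) = L_{m} \to 0$, so the Cauchy sequence converges.

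I expect the main obstacle to be the diagonal step itself: the block boundaries must be chosen to control finitely many distance ratios simultaneously \emph{and} to drive the diagonal off to infinity in $X$, so that $\tilde x$ meets both hypotheses of Lemma~\ref{l3.1} (membership in $\tilde X_{\infty}$ and mutual stability with every $\tilde\gamma^{i}$). Lemma~\ref{l3.2} is not logically required by this route, but it provides a fallback for forcing the diagonal into $\tilde X_{\infty}$ if one prefers not to impose the condition $d(\gamma_{n}^{m}, p) > m$ directly.
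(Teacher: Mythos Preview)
Your proof is correct and follows the same overall strategy as the paper: build a diagonal sequence from representatives of a Cauchy sequence and then invoke Lemma~\ref{l3.1} to place it in $\tilde X_{\infty,\tilde r}$. The differences are in the bookkeeping. The paper first extracts a rapidly Cauchy subsequence $(\tilde\gamma^{m_k})$ with $\sum \varepsilon_k<\infty$, builds the diagonal via the auxiliary sequences $\tilde\beta^{k}$, and estimates $\limsup_n d(x_n,\beta_n^k)/r_n$ by a tail sum $2\sum_{i\ge k}\varepsilon_i$; it then appeals to Lemma~\ref{l3.2} to replace the constructed $\tilde x\subset X$ by one lying in $\tilde X_\infty$. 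You instead keep the full Cauchy sequence, choose block boundaries $n_m$ that simultaneously control all ratios $d(\gamma_n^i,\gamma_n^j)/r_n$ for $i,j\le m$ \emph{and} force $d(\gamma_n^m,p)>m$, so that $\tilde x\in\tilde X_\infty$ is automatic and Lemma~\ref{l3.2} is never needed. Your route is slightly more self-contained and gives the explicit identification $\tilde d(\tilde x,\tilde\gamma^{i})=L_i$ directly; the paper's route trades this for the summable-tail estimate, which is a more standard completeness pattern but requires the extra lemma to land in $\tilde X_\infty$.
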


\begin{proof}
Let $p\in X,$ let $\tilde r=(r_n)_{n\in\mathbb N}$ be a scaling sequence and let $\tilde X_{\infty, \tilde r}$ be a maximal self-stable set with metric identification $(\Omega_{\infty, \tilde r}^{X}, \rho).$ The metric space $(\Omega_{\infty, \tilde r}^{X}, \rho)$ is complete if and only if the pseudometric space $(\tilde X_{\infty, \tilde r}, \tilde d)$ is complete, i.e., for every Cauchy sequence $(\tilde\gamma^{m})_{m\in\mathbb N}\subset\tilde X_{\infty, \tilde r}$ there is $\tilde x=(x_n)_{n\in\mathbb N}\in\tilde X_{\infty, \tilde r}$ such that
\begin{equation}\label{e3.10}
\lim_{m\to\infty}\tilde d(\tilde x, \tilde\gamma^{m})=0.
\end{equation}
By Lemma~\ref{l3.1} if \eqref{e3.10} holds with some $\tilde x\in\tilde X_{\infty},$ then $\tilde x\in\tilde X_{\infty, \tilde r}.$ Let $(\tilde\gamma^{m})_{m\in\mathbb N}$ be a Cauchy sequence in $(\tilde X_{\infty, \tilde r}, \tilde d).$ We first find $\tilde x=(x_n)_{n\in\mathbb N}\subset X$ for which \eqref{e3.10} holds. Then, using Lemma~\ref{l3.2}, we obtain $\tilde x\in\tilde X_{\infty}$ satisfying \eqref{e3.10}. Let $(\varepsilon)_{k\in\mathbb N}\subset (0, \infty)$ be an decreasing sequence such that
\begin{equation}\label{e3.11}
\sum_{k=1}^{\infty}\varepsilon_{k}<\infty.
\end{equation}
There is $(m_k)_{k\in\mathbb N}\subset\mathbb N$ such that, for all $k\in\mathbb N,$ $m_{k+1}>m_{k}$ and $\tilde d(\tilde\gamma^{m}, \tilde\gamma^{m_k})\le\varepsilon_{k}$
holds whenever $m\ge m_{k}.$ Now we construct $\tilde x=(x_n)_{n\in\mathbb N}\subset X$ such that $\tilde x$ and $\tilde\gamma^{m_k}$ are mutually stable for every $k\in\mathbb N$ and
\begin{equation}\label{e3.12}
\lim_{k\to\infty}\tilde d(\tilde\gamma^{m_k}, \tilde x)=0.
\end{equation}
For every $m\in\mathbb N$ we set $\tilde\gamma^{m}=(\gamma_{n}^{m})_{n\in\mathbb N}.$ Let $(N_k)_{k\in\mathbb N}\subset\mathbb N$ and $\tilde\beta^{k}=(\beta_{n}^{k})_{n\in\mathbb N}\in\tilde X_{\infty}$ be inductively defined by the rule: if $k=1,$ then $N_1=1$ and $(\beta_{n}^{1})_{n\in\mathbb N}=(\gamma_{n}^{m_1})_{n\in\mathbb N};$ if $k\ge 2,$ then $N_k$ is the smallest $l\in\mathbb N$ which satisfies the inequalities $l>N_{k-1}$ and
\begin{equation}\label{e3.13}
\beta_{n}^{k}:=\begin{cases}
         \beta_{n}^{1} & \mbox{if} $ $ N_1\le n<N_2,\\
         \beta_{n}^{2}& \mbox{if}$ $  N_2\le n<N_1,\\
         \cdots &\cdots\cdots\cdots\cdots,\\
          \beta_{n}^{k-1}& \mbox{if}$ $  N_{k-1}\le n<N_k,\\
          \gamma_{n}^{m_k}& \mbox{if}$ $  n\ge N_k.\\
         \end{cases}
\end{equation}
Define $\tilde x=(x_n)_{n\in\mathbb N}$ as
\begin{equation}\label{e3.14}
x_n:=\beta_{n}^{k} \quad\mbox{for}\quad n\in[N_k, N_{k+1}), k=1,2,3,\ldots .
\end{equation}
It follows from \eqref{e3.12} and \eqref{e3.13} that
\begin{equation}\label{e3.15}
\lim_{k\to\infty}\frac{d(\beta_{n}^{k}, \gamma_{n}^{m_k})}{r_n}=0
\end{equation}
for every $k\in\mathbb N,$ and
\begin{equation}\label{e3.16}
\frac{d(\beta_{n}^{k}, \beta_{n}^{k-1})}{r_n}<2\varepsilon_{k}
\end{equation}
for all $n, k\in\mathbb N.$ Limit relation \eqref{e3.15} implies that \eqref{e3.12} holds
if and only if
\begin{equation}\label{e3.17}
\lim_{k\to\infty}\limsup_{n\to\infty}\left(\frac{d(x_n, \beta_{n}^{k})}{r_n}\right)=0.
\end{equation}
Using \eqref{e3.14}, we see that for every $n\in\mathbb N$ there is $K(n)\in\mathbb N$ such that
\begin{equation*}
\frac{d(x_n, \beta_{n}^{k})}{r_n}=\frac{d(\beta_{n}^{K(n)}, \beta_{n}^{k})}{r_n}.
\end{equation*}
If $k$ is given, then, for sufficiently large $n,$ the inequality $K(n)>k$ holds. Consequently
by \eqref{e3.16} we have
\begin{equation}\label{e3.18}
\frac{d(\beta_{n}^{K(n)}, \beta_{n}^{k})}{r_n}\le\sum_{i=0}^{K(n)-k}\frac{d(\beta_{n}^{k+i+1}, \beta_{n}^{k+i})}{r_n}\le 2\sum_{i=k}^{\infty}\varepsilon_{i}.
\end{equation}
Inequalities \eqref{e3.11} and \eqref{e3.18} imply \eqref{e3.17}.
\end{proof}

If $Y$ is an unbounded subspace of metric space $X$ and for a scaling sequence $\tilde r$ and maximal self-stable $\tilde X_{\infty, \tilde r}$ and $\tilde Y_{\infty, \tilde r}$ we have $\tilde Y_{\infty, \tilde r}\subseteq\tilde X_{\infty, \tilde r},$ then there is a unique isometric embedding $in_{Y}:\Omega_{\infty, \tilde r}^{Y}\to\Omega_{\infty, \tilde r}^{X}$ such that the diagram \begin{equation*}
\begin{CD}
\tilde Y_{\infty, \tilde r} @>In_{\tilde Y}>> \tilde X_{\infty, \tilde r}\\
@V{\pi_{Y}}VV @VV{\pi_{X}}V \\
\Omega_{\infty, \tilde r}^{Y} @>in_{Y}>> \Omega_{\infty, \tilde r}^{X}
\end{CD}
\end{equation*}
is commutative, where $\pi_{Y}$ and $\pi_{X}$ are the natural projections and $In_{\tilde Y}(\tilde y)=\tilde y$ for every $\tilde y\in\tilde Y_{\infty, \tilde r}.$

Let $X$ and $Y$ be a metric spaces. Recall that a map $f: X\to Y$ is called \emph{closed} if the image of each closed set is closed.

\begin{corollary}\label{c3.4}
Let $(X, d)$ be an unbounded metric space, $Y$ be an unbounded subspace of $X,$ $\tilde r$ be a scaling sequence and let $\Omega_{\infty, \tilde r}^{X}$ and $\Omega_{\infty, \tilde r}^{Y}$ be pretangent spaces such that for corresponding $\tilde X_{\infty, \tilde r}$ and $\tilde Y_{\infty, \tilde r}$ we have $\tilde X_{\infty, \tilde r}\supseteq\tilde Y_{\infty, \tilde r}.$ Then the isometric embedding $in_{Y}:\Omega_{\infty, \tilde r}^{Y}\to\Omega_{\infty, \tilde r}^{X}$ is closed.
\end{corollary}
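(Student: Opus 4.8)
The plan is to reduce the claim to the general fact that an isometric embedding whose domain is complete is automatically a closed map. The completeness needed is supplied by Theorem~\ref{th3.3}: since $Y$ is an unbounded metric space in its own right, the pretangent space $\Omega_{\infty, \tilde r}^{Y}$ is complete.

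First I would fix an arbitrary closed set $C \subseteq \Omega_{\infty, \tilde r}^{Y}$ and verify that $in_{Y}(C)$ is closed in $\Omega_{\infty, \tilde r}^{X}$ by a sequential argument. I take a sequence $(\alpha_m)_{m\in\mathbb N} \subseteq C$ whose image $(in_{Y}(\alpha_m))_{m\in\mathbb N}$ converges to some $\beta\in\Omega_{\infty, \tilde r}^{X}$, and aim to exhibit a point $\alpha\in C$ with $in_{Y}(\alpha)=\beta$.

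The key steps run as follows. A convergent sequence is Cauchy, so $(in_{Y}(\alpha_m))_{m\in\mathbb N}$ is Cauchy in $\Omega_{\infty, \tilde r}^{X}$; because $in_{Y}$ preserves distances, $(\alpha_m)_{m\in\mathbb N}$ is then Cauchy in $\Omega_{\infty, \tilde r}^{Y}$. By Theorem~\ref{th3.3} the space $\Omega_{\infty, \tilde r}^{Y}$ is complete, hence $\alpha_m\to\alpha$ for some $\alpha\in\Omega_{\infty, \tilde r}^{Y}$, and $\alpha\in C$ since $C$ is closed. The embedding $in_{Y}$ is continuous, so $in_{Y}(\alpha_m)\to in_{Y}(\alpha)$, and uniqueness of limits in $\Omega_{\infty, \tilde r}^{X}$ forces $\beta=in_{Y}(\alpha)\in in_{Y}(C)$. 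Thus $in_{Y}(C)$ contains all of its limit points and is closed.

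I do not anticipate a genuine obstacle: the single substantive ingredient is the completeness of $\Omega_{\infty, \tilde r}^{Y}$, which is exactly Theorem~\ref{th3.3}, while the remainder is the routine fact that isometries send Cauchy sequences to Cauchy sequences. Equivalently, one may first apply the argument with $C=\Omega_{\infty, \tilde r}^{Y}$ to conclude that the image $in_{Y}(\Omega_{\infty, \tilde r}^{Y})$ is a complete, hence closed, subspace of $\Omega_{\infty, \tilde r}^{X}$; the general case then follows because $in_{Y}$ is an isometry onto this image and a closed subset of a closed subspace is closed.
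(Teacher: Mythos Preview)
Your argument is correct and follows essentially the same route as the paper: both hinge on Theorem~\ref{th3.3} to obtain completeness of $\Omega_{\infty, \tilde r}^{Y}$, from which closedness of the isometric embedding is immediate. The paper phrases this by first reducing to showing that $in_{Y}(\Omega_{\infty, \tilde r}^{Y})$ is closed in $\Omega_{\infty, \tilde r}^{X}$ and then invoking the general fact that a complete metric space is closed in every superspace, which is exactly the ``equivalent'' formulation you mention at the end.
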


\begin{proof}
The map $in_{Y}$ is an isometric embedding. Hence, $in_{Y}$ is closed if and only if the set $in_{Y}\left(\Omega_{\infty, \tilde r}^{Y}\right)$ is a closed subset of $\Omega_{\infty, \tilde r}^{X}.$ The space $\Omega_{\infty, \tilde r}^{X}$ is complete by Theorem~\ref{th3.3}. Since a metric space is complete if and only if this space is closed in every its superspace (see, for example, \cite[Theorem~10.2.2]{Sear}), $in_{Y}\left(\Omega_{\infty, \tilde r}^{Y}\right)$ is closed in $\Omega_{\infty, \tilde r}^{X}.$
\end{proof}

\section{When are the pretangent spaces finite?}

The main goal of this section is to find conditions under which the ine\-quality $\left|\Omega_{\infty, \tilde r}^{X}\right|\le n$ holds, with given $n\in\mathbb N,$ for all pretangent at infinity spaces $\Omega_{\infty, \tilde r}^{X}.$

\begin{lemma}\label{Pr2}
Let $(X, d)$ be an unbounded metric space. Then there exists a pretangent space $\Omega_{\infty, \tilde r}^{X}$ such that $\left|\Omega_{\infty, \tilde r}^{X}\right|\ge 2.$
\end{lemma}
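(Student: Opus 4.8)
The plan is to exhibit a single scaling sequence $\tilde r$ together with a maximal self-stable set whose metric identification contains at least two distinct points. Since every pretangent space already contains the distinguished point $\nu_{0}=\tilde X_{\infty,\tilde r}^{0}$ by Proposition~\ref{pr2.2}(vii), it suffices to produce a second point lying at positive $\rho$-distance from $\nu_{0}$. By Proposition~\ref{pr2.2}(vi), the distance from $\nu_{0}$ to the class of a sequence $\tilde x\in Seq(X,\tilde r)$ equals $\tilde{\tilde d}_{\tilde r}(\tilde x)$, so the whole problem reduces to finding $\tilde r$ and $\tilde x$ with $\tilde{\tilde d}_{\tilde r}(\tilde x)>0$.

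First I would use the unboundedness of $X$ to fix $p\in X$ and choose $\tilde x=(x_n)_{n\in\mathbb N}\subset X$ with $\lim_{n\to\infty}d(x_n,p)=\infty$, so that $\tilde x\in\tilde X_{\infty}$. Then I would invoke exactly the construction in the proof of Proposition~\ref{pr2.1}(ii): setting $r_n:=d(x_n,p)$ when $x_n\ne p$ and $r_n:=1$ otherwise yields a scaling sequence $\tilde r$ for which $\tilde x\in Seq(X,\tilde r)$ and, crucially, $\tilde{\tilde d}_{\tilde r}(\tilde x)=1$. This positivity is precisely what will keep the class of $\tilde x$ away from $\nu_{0}$.

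Next I would embed $\tilde x$ into a maximal self-stable set. The singleton $\{\tilde x\}$ is trivially self-stable, so Proposition~\ref{pr2.2}(iii) shows that $\{\tilde x\}\cup\tilde X_{\infty,\tilde r}^{0}$ is self-stable. A routine Zorn's-lemma argument then extends it to a maximal self-stable set $\tilde X_{\infty,\tilde r}\supseteq\{\tilde x\}\cup\tilde X_{\infty,\tilde r}^{0}$: the union of a chain of self-stable sets is again self-stable, because any two of its members already lie together in one member of the chain and are therefore mutually stable. This $\tilde X_{\infty,\tilde r}$ determines a pretangent space $\Omega_{\infty,\tilde r}^{X}$.

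Finally I would check that the classes of $\tilde x$ and of an arbitrary $\tilde z\in\tilde X_{\infty,\tilde r}^{0}$ are distinct. Since $\tilde x\in Seq(X,\tilde r)$, Proposition~\ref{pr2.2}(vi) gives $\rho(\pi(\tilde x),\nu_{0})=\tilde d_{\tilde r}(\tilde x,\tilde z)=\tilde{\tilde d}_{\tilde r}(\tilde x)=1\neq 0$, whence $\pi(\tilde x)\neq\nu_{0}$ and therefore $\left|\Omega_{\infty,\tilde r}^{X}\right|\ge 2$. I do not expect any genuine obstacle: the two delicate points are verifying that $\{\tilde x\}\cup\tilde X_{\infty,\tilde r}^{0}$ is self-stable, which is delivered by part (iii), and confirming that the two classes are truly different, which is forced by the value $\tilde{\tilde d}_{\tilde r}(\tilde x)=1$ via part (vi).
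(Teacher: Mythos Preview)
Your argument is correct and follows precisely the approach the paper intends: the paper merely remarks that the proof is similar to that of Proposition~\ref{pr2.1}(ii), and you have spelled out exactly that construction---choosing $\tilde x\in\tilde X_{\infty}$, setting $r_n=d(x_n,p)$ so that $\tilde{\tilde d}_{\tilde r}(\tilde x)=1$, and then extending to a maximal self-stable set via Proposition~\ref{pr2.2} and Zorn's lemma.
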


The proof of the lemma is similar to the proof of statement (ii) from Proposition~\ref{pr2.1}.

The following lemma is an analog of Lemma 5 from \cite{DAK}.

\begin{lemma}\label{l4.2}
Let $(X,d)$ be an unbounded metric space, $p\in X,$ let $\mathbf{\mathfrak{B}}$ be a countable subset of $\tilde X_{\infty}$ and let $\tilde r=(r_n)_{n\in\mathbb N}$ be a scaling sequence.
Suppose that
\begin{equation}\label{e4.14}
\limsup_{n\to\infty}\frac{d(b_n, p)}{r_n}<\infty
\end{equation}
holds for every $\tilde
b=(b_n)_{n\in\mathbb N}\in\mathbf{\mathfrak{B}}$. Then there is a strictly increasing sequence $(n_k)_{k\in\mathbb N}\subset\mathbb N$
such that the family
$$
\mathbf{\mathfrak{B'}}:=\{\tilde b'=(b_{n_k})_{k\in\mathbb N}: \tilde b \in \mathbf{\mathfrak{B}}\}
$$
is self-stable at infinity with respect to $\tilde r'=(r_{n_k})_{k\in\mathbb N}.$
\end{lemma}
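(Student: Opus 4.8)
The plan is to produce the subsequence $(n_k)$ by a single diagonal (Bolzano--Weierstrass) extraction applied to a countable family of bounded real sequences, arranged so that every pairwise rescaled distance --- and every rescaled distance to $p$ --- converges along $(n_k)$. First I would record the two boundedness facts on which everything rests. Fix an enumeration $\mathbf{\mathfrak{B}}=\{\tilde b^{(1)}, \tilde b^{(2)}, \dots\}$ with $\tilde b^{(i)}=(b^{(i)}_n)_{n\in\mathbb N}$ (if $\mathbf{\mathfrak{B}}$ is finite the argument only simplifies). For each $i$ the sequence $a^{(i)}_n:=d(b^{(i)}_n,p)/r_n$ is nonnegative with $\limsup_n a^{(i)}_n<\infty$ by~\eqref{e4.14}, hence bounded; and for each pair $(i,j)$ the triangle inequality gives $d(b^{(i)}_n, b^{(j)}_n)/r_n \le a^{(i)}_n + a^{(j)}_n$, so $c^{(i,j)}_n:=d(b^{(i)}_n, b^{(j)}_n)/r_n$ is a bounded nonnegative real sequence as well.

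Next I would collect the countably many bounded sequences $\{a^{(i)}: i\in\mathbb N\}\cup\{c^{(i,j)}: (i,j)\in\mathbb N\times\mathbb N\}$ into a single list $s^{(1)}, s^{(2)}, \dots$ (a countable union of countable families is countable). A standard diagonal extraction then produces nested infinite index sets $\mathbb N\supseteq \mathbb N_1\supseteq\mathbb N_2\supseteq\cdots$ with $s^{(m)}$ convergent along $\mathbb N_m$, and choosing $n_k$ to be the $k$-th element of $\mathbb N_k$ yields a strictly increasing sequence $(n_k)_{k\in\mathbb N}$ along which every $s^{(m)}$, and therefore every $a^{(i)}$ and every $c^{(i,j)}$, converges to a finite limit.

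Finally I would read off the conclusion along $\tilde r'=(r_{n_k})_{k\in\mathbb N}$. Convergence of $c^{(i,j)}$ says exactly that $\tilde b^{(i)}{}'$ and $\tilde b^{(j)}{}'$ are mutually stable with respect to $\tilde r'$, so $\mathbf{\mathfrak{B'}}$ satisfies the pairwise condition of Definition~\ref{D1.2}. Convergence of $a^{(i)}$ gives the finite limit $\lim_k d(b^{(i)}_{n_k},p)/r_{n_k}$; combined with $\lim_k d(b^{(i)}_{n_k},p)=\infty$, which holds because $\tilde b^{(i)}\in\tilde X_{\infty}$ forces $d(b^{(i)}_n,p)\to\infty$ along every subsequence, this shows $\tilde b^{(i)}{}'\in Seq(X,\tilde r')$. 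Hence $\mathbf{\mathfrak{B'}}\subseteq Seq(X,\tilde r')$ is self-stable, as required.

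I expect the only genuine care to lie in the bookkeeping of the diagonal extraction and in not overlooking the membership requirement $\mathbf{\mathfrak{B'}}\subseteq Seq(X,\tilde r')$: self-stability in the sense of Definition~\ref{D1.2} presupposes that every element already belongs to $Seq(X,\tilde r')$, which is precisely why the distance-to-$p$ sequences $a^{(i)}$ must be included among the sequences being diagonalized and not merely the pairwise distances $c^{(i,j)}$.
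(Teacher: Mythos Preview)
Your proposal is correct and follows essentially the same route as the paper: bound the rescaled pairwise distances via the triangle inequality and~\eqref{e4.14}, then run a Bolzano--Weierstrass extraction followed by Cantor's diagonal to obtain the common subsequence. The paper organizes the diagonalization by enumerating ordered pairs in $\mathfrak{B}^2$ rather than listing the $a^{(i)}$ and $c^{(i,j)}$ separately, but this is only a bookkeeping difference; your remark that the distance-to-$p$ sequences must be included so that $\mathfrak{B}'\subseteq Seq(X,\tilde r')$ is exactly the point the paper handles by extracting three limits at each inductive step.
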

\begin{proof}
It is sufficient to consider the case when $\mathfrak B$ is countably infinite. Then the set of all ordered pairs $(\tilde b, \tilde x)\in\mathfrak B^{2}$ can be enumerated as $(\tilde b^{1}, \tilde x^{1}), (\tilde b^{2}, \tilde x^{2}), \ldots$. The triangle inequality and \eqref{e4.14} imply
\begin{equation*}
\sup_{n\in\mathbb N}\frac{d(b_{n}^{j}, x_{n}^{j})}{r_n}<\infty
\end{equation*}
for each pair $(\tilde b^{j}, \tilde x^{j})\in\mathfrak B^{2},$ $\tilde b^{j}=(b_{n}^{j})_{n\in\mathbb N}$ and $\tilde x^{j}=(x_{n}^{j})_{n\in\mathbb N}$. In particular, we have
\begin{equation*}
\sup_{n\in\mathbb N}\frac{d(b_{n}^{1}, x_{n}^{1})}{r_n}<\infty.
\end{equation*}
Since every bounded, infinite sequence contains a convergent subsequence, there is a strictly increasing sequence $\tilde n^{1}=(n_{k}^{1})_{k\in\mathbb N} \subseteq \mathbb N$ such that
\begin{equation*}
\lim_{k\to\infty}\frac{d(b_{n_{k}^{1}}^{1}, x_{n_{k}^{1}}^{1})}{r_{n_k^{1}}}, \quad
\lim_{k\to\infty}\frac{d(x_{n_{k}^{1}}^{1}, p)}{r_{n_k^{1}}} \quad\mbox{and}\quad
\lim_{k\to\infty}\frac{d(b_{n_{k}^{1}}^{1}, p)}{r_{n_k^{1}}}
\end{equation*}
are finite. Hence, the sequences $(b_{n_{k}^{1}}^{1})_{k\in\mathbb N}$ and $(x_{n_{k}^{1}}^{1})_{k\in\mathbb N}$ are mutually stable with respect to $(r_{n_k^{1}})_{k\in\mathbb N}$. Analogously, by induction, we can prove that for every integer $i\ge 2$ there is a subsequence $\tilde n_{i}=(n_{k}^{i})_{k\in\mathbb N}$ of sequence $\tilde n_{i-1}$ such that $(b_{n_{k}^{i}}^{i})_{k\in\mathbb N}$ and $(x_{n_{k}^{i}}^{i})_{k\in\mathbb N}$ are mutually stable with respect to $(r_{n_k^{i}})_{k \in \mathbb N}$. Using Cantor's diagonal construction, write $\tilde r' := (r_{n_k^{k}})_{k\in\mathbb N}$ and, for every $\tilde b=(b_n)_{n\in\mathbb N}\in\mathfrak B$, define $\tilde b'$ as $\tilde b' := (b_{n_k^{k}})_{k \in\mathbb N}$. Then the family $\mathfrak B':=\{\tilde b': \tilde b\in\mathfrak B\}$ is self-stable at infinity with respect to $\tilde r'$.
\end{proof}

Let $(X,d)$ be an unbounded metric space and let $p$ be a point of $X$. Denote by $X^{n}$ the set of all $n$-tuples $x=(x_1,\ldots, x_n)$ with $x_k\in X$ for $k=1, \ldots, n,$ $n\ge 2$ and define the function $F_n: X^{n}\to\mathbb R$ as
\begin{multline}\label{e4.13}
F_n(x_1, \ldots, x_n):=\\
\begin{cases}
0& \mbox{if } (x_1, \ldots, x_n) = (p, \ldots, p)\\
\dfrac{\min\limits_{1\le k\le n}d(x_k, p)\prod\limits_{1\le k<l\le n} d(x_k, x_l)} {\left(\max\limits_{1\le k\le n}d(x_k, p)\right)^{\frac{n(n-1)}{2}+1}} & \mbox{otherwise}.
\end{cases}
\end{multline}

\begin{theorem}\label{t4.3}
Let $(X, d)$ be an unbounded metric space and let $n\ge 2$ be an integer number. Then the inequality
\begin{equation}\label{e4.15}
\left|\Omega_{\infty, \tilde r}^{X}\right|\le n
\end{equation}
holds for every $\Omega_{\infty, \tilde r}^{X}$ if and only if
\begin{equation}\label{e4.16}
\lim_{x_1, \ldots, x_n\to\infty}F_n(x_1, \ldots,  x_n)=0.
\end{equation}
\end{theorem}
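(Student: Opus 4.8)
The plan is to translate condition \eqref{e4.16} into a statement about the ``pretangent quantities'' $a_i:=\tilde{\tilde d}_{\tilde r}(\tilde x^i)$ and $c_{il}:=\tilde d_{\tilde r}(\tilde x^i,\tilde x^l)$ attached to mutually stable sequences $\tilde x^1,\dots,\tilde x^n$, and then to use Proposition~\ref{pr2.2}(vi), according to which $a_i$ is exactly the distance in $\Omega_{\infty,\tilde r}^X$ from the class of $\tilde x^i$ to the distinguished point $\nu_0$, while $c_{il}$ is the distance between the classes of $\tilde x^i$ and $\tilde x^l$. The decisive observation is that $F_n$ is invariant under rescaling the metric, since its numerator and denominator are both homogeneous of degree $\frac{n(n-1)}{2}+1$ in the distances; hence for $\tilde x^i=(x^i_j)_{j\in\mathbb N}$ one may divide every distance by $r_j$ before passing to the limit. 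Thus, if $\tilde x^1,\dots,\tilde x^n$ are mutually stable and not all of them lie in $\nu_0$, then the diagonal $(x^1_j,\dots,x^n_j)_{j\in\mathbb N}$ satisfies $\min_i d(x^i_j,p)\to\infty$ and
\[
\lim_{j\to\infty}F_n(x^1_j,\dots,x^n_j)=\frac{\min_i a_i\prod_{i<l}c_{il}}{(\max_i a_i)^{\frac{n(n-1)}{2}+1}}.
\]

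For the direction \eqref{e4.16}$\Rightarrow$\eqref{e4.15} I would argue by contraposition. Suppose some pretangent space has $|\Omega_{\infty,\tilde r}^X|\ge n+1$. Since $\nu_0$ is always a point of $\Omega_{\infty,\tilde r}^X$, I can choose representatives $\tilde x^1,\dots,\tilde x^n$ of $n$ further, pairwise distinct points, so that all $a_i>0$ and all $c_{il}>0$, where the first inequalities come from identifying $a_i$ with the distance to $\nu_0$ via Proposition~\ref{pr2.2}(vi). Then the displayed limit is strictly positive even though the arguments tend to infinity, contradicting \eqref{e4.16}.

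For the converse I would again argue by contraposition. If \eqref{e4.16} fails, there are $\varepsilon>0$ and $n$-tuples $(x^1_m,\dots,x^n_m)$ with $\min_i d(x^i_m,p)\to\infty$ and $F_n(x^1_m,\dots,x^n_m)\ge\varepsilon$. The key trick is the choice of scaling sequence: set $r_m:=\max_i d(x^i_m,p)$, which tends to infinity, so that $\max_i d(x^i_m,p)/r_m=1$ and $F_n(x^1_m,\dots,x^n_m)$ equals $\big(\min_i d(x^i_m,p)/r_m\big)\prod_{i<l}\big(d(x^i_m,x^l_m)/r_m\big)$. All these normalized quantities lie in $[0,2]$, so by Lemma~\ref{l4.2} (its hypothesis \eqref{e4.14} holds, being bounded by $1$) I pass to a subsequence, giving a scaling sequence $\tilde r'$ along which $\{\tilde x^1,\dots,\tilde x^n\}$ is self-stable and all limits $a_i,c_{il}$ exist. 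Passing to the limit in $F_n\ge\varepsilon$ yields $\min_i a_i\prod_{i<l}c_{il}\ge\varepsilon>0$, whence every $a_i>0$ and every $c_{il}>0$. Extending $\{\tilde x^1,\dots,\tilde x^n\}$ to a maximal self-stable set $\tilde X_{\infty,\tilde r'}$ by Zorn's lemma, which automatically contains $\nu_0=\tilde X_{\infty,\tilde r'}^0$ by Proposition~\ref{pr2.2}(v), produces a pretangent space whose classes $\pi(\tilde x^1),\dots,\pi(\tilde x^n),\nu_0$ are pairwise distinct, so $|\Omega_{\infty,\tilde r'}^X|\ge n+1$, contradicting \eqref{e4.15}.

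The main obstacle I anticipate is the converse construction: one must simultaneously choose the scaling sequence $r_m=\max_i d(x^i_m,p)$ so that $F_n$ collapses to a clean product of normalized distances, and extract a \emph{single} subsequence along which all $n$ sequences are mutually stable and each lies in $Seq(X,\tilde r')$, so that $a_i,c_{il}$ are genuine limits rather than mere $\limsup$'s; Lemma~\ref{l4.2} is tailored precisely to this diagonal extraction. A minor point to verify carefully is the intended meaning of \eqref{e4.16}, namely that all arguments leave every bounded set, equivalently $\min_i d(x^i,p)\to\infty$, and the fact that the diagonal tuples in the first direction genuinely realize this mode of convergence.
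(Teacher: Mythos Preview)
Your proposal is correct and follows essentially the same route as the paper: both directions are handled by contraposition, with the key steps being the homogeneity of $F_n$ (which the paper uses implicitly in \eqref{e4.18}), the choice of scaling $r_m=\max_i d(x^i_m,p)$ (the paper writes $\max\{1,\dots\}$ to guarantee positivity, a harmless cosmetic difference), the diagonal extraction via Lemma~\ref{l4.2}, and the addition of the distinguished point $\nu_0$ to obtain $n+1$ distinct classes. The only stylistic difference is that the paper explicitly adjoins a $\tilde z\in\tilde X_{\infty,\tilde r}^{0}$ via Proposition~\ref{pr2.2}(iii), whereas you extend to a maximal self-stable set and invoke Proposition~\ref{pr2.2}(v); these are equivalent.
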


\begin{proof}
Let \eqref{e4.15} hold for all pretangent spaces $\Omega_{\infty, \tilde r}^{X}$. Suppose $\tilde x^{i}=(x_{m}^{i})_{m\in\mathbb N}\in\tilde X_{\infty}$, $i=1, \ldots, n$ such that
\begin{equation}\label{e4.17}
\lim_{m\to\infty}F_{n}(x_{m}^{1},\ldots, x_{m}^{n}) = \limsup_{x_1, \ldots, x_n \to \infty}F_{n}(x_1, \ldots, x_n) > 0.
\end{equation}
If $\tilde r=(r_m)_{m\in\mathbb N}$ is a scaling sequence with
$$
r_m=\max\{1, d(x_{m}^{1}, p), \ldots, d(x_{m}^{n}, p)\}
$$
for every $m\in\mathbb N$, where $p$ is a point of $X$ in definition \eqref{e4.13}, then the inequality
$$
\limsup_{m\to\infty}\frac{d(x_{m}^{k}, p)}{r_m}\le 1
$$
holds for every $k\in\{1, \ldots, n\}$. Using Lemma~\ref{l4.2} we may suppose that the family $\{\tilde x^{1}, \ldots, \tilde x^{n}\}$ is self-stable with respect to $\tilde r$. Now \eqref{e4.17} and \eqref{e4.13} imply
$$
\tilde{\tilde d}_{\tilde r}(\tilde x^{k})>0\quad\mbox{and}\quad \tilde d_{\tilde r}(\tilde x^{k}, \tilde x^{j})>0
$$
for all distinct $k, j\in\{1, \ldots, n\}$. Adding $\tilde z\in\tilde X_{\infty, \tilde r}^{0}$ to the family $\{\tilde x^{1}, \ldots, \tilde x^{n}\}$ we see that the family $\{\tilde z, \tilde x^{1}, \ldots, \tilde x^{n}\}$ is self-stable by statement (iii) of Proposition~\ref{pr2.2}. Consequently there is $\Omega_{\infty, \tilde r}^{X}$ with $\left|\Omega_{\infty, \tilde r}^{X}\right|\ge n+1$, contrary to \eqref{e4.15}. Equality \eqref{e4.16} follows.

To prove the converse statement it suffices to consider some different $n+1$ points $\nu_{0}, \nu_{1}, \ldots, \nu_{n}\in\Omega_{\infty, \tilde r}^{X}$ such that $\nu_{0}=\tilde X_{\infty, \tilde r}^{0},$ (see \eqref{e1.8}). Then, for the sequences $\tilde x^{1}, \ldots, \tilde x^{n}$ with
$$
\pi(\tilde x^{k})=\nu_{k},\,\, \tilde x^{k}=(x_{m}^{k})_{m\in\mathbb N},\,\, k\in\{1, \ldots, n\},
$$
we obtain
\begin{equation}\label{e4.18}
\lim_{m\to\infty}F_{n}(x_{m}^{1}, \ldots, x_{m}^{n}) = \frac{\min\limits_{1\le k\le n} \rho(\nu_0, \nu_k) \prod\limits_{1\le k<l\le n} \rho(\nu_k, \nu_l)} {\left(\mathop{\max}\limits_{1\le k\le n} \rho(\nu_k, \nu_0)\right)^{\frac{n(n-1)}{2}+1}}\ne 0.
\end{equation}
\end{proof}

\begin{corollary}\label{c4.4}
Let $(X, d)$ be an unbounded metric space and let $n\ge 2$ be an integer number. Suppose $\mathop{\lim}\limits_{x_1, \ldots, x_n\to\infty}F_{n}(x_1, \ldots, x_n)=0$ holds. Then every pretangent space $\Omega_{\infty, \tilde r}^{X}$ with $\left|\Omega_{\infty, \tilde r}^{X}\right|=n$ is tangent.
\end{corollary}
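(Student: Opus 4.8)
The plan is to exploit the fact that the hypothesis $\mathop{\lim}\limits_{x_1,\ldots,x_n\to\infty}F_n(x_1,\ldots,x_n)=0$ is a property of the space $(X,d)$ alone and makes no reference to any scaling sequence. Consequently, by Theorem~\ref{t4.3}, the bound $\left|\Omega_{\infty,\tilde s}^{X}\right|\le n$ holds not merely for the given scaling sequence but simultaneously for \emph{every} scaling sequence $\tilde s$ and every maximal self-stable subset of $Seq(X,\tilde s)$. This uniformity is the crux of the whole argument.

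To verify tangency I would return to Definition~\ref{D1.4} and check surjectivity of the isometric embedding $em'\colon\Omega_{\infty,\tilde r}^{X}\to\Omega_{\infty,\tilde r'}^{X}$ for an arbitrary subsequence $\tilde r'=(r_{n_k})_{k\in\mathbb N}$ and an arbitrary $\tilde X_{\infty,\tilde r'}$ satisfying~\eqref{e1.7}. First I would observe that $\tilde r'$ is itself a scaling sequence, since a subsequence of a sequence tending to infinity again tends to infinity. Hence $\Omega_{\infty,\tilde r'}^{X}$ is a genuine pretangent space with respect to $\tilde r'$, and the uniform bound from the previous paragraph yields $\left|\Omega_{\infty,\tilde r'}^{X}\right|\le n$.

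The second ingredient is that $em'$ is an isometric embedding and therefore injective: distinct points of a metric space lie at positive distance, and $em'$ preserves distances, so it cannot identify them. Thus the image $em'\bigl(\Omega_{\infty,\tilde r}^{X}\bigr)$ is a subset of $\Omega_{\infty,\tilde r'}^{X}$ of cardinality exactly $\left|\Omega_{\infty,\tilde r}^{X}\right|=n$. Combining this with $\left|\Omega_{\infty,\tilde r'}^{X}\right|\le n$, and using that both cardinalities are finite, forces
$$
em'\bigl(\Omega_{\infty,\tilde r}^{X}\bigr)=\Omega_{\infty,\tilde r'}^{X},
$$
so that $em'$ is onto. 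Since $\tilde r'$ and $\tilde X_{\infty,\tilde r'}$ were arbitrary, Definition~\ref{D1.4} lets us conclude that $\Omega_{\infty,\tilde r}^{X}$ is tangent.

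I expect no serious obstacle: once the scaling-sequence-independence of the hypothesis is noticed, the conclusion reduces to an elementary pigeonhole step for finite sets. The only two points deserving explicit care are the \emph{uniformity}—that Theorem~\ref{t4.3} may legitimately be applied to the subsequential scaling sequence $\tilde r'$—and the \emph{injectivity} of $em'$, both of which are immediate from the definitions recalled in the introduction.
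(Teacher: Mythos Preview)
Your argument is correct and is precisely the intended one: the paper states Corollary~\ref{c4.4} without proof, as an immediate consequence of Theorem~\ref{t4.3}, and your reasoning---apply Theorem~\ref{t4.3} to the subsequence $\tilde r'$ to get $\left|\Omega_{\infty,\tilde r'}^{X}\right|\le n$, then use injectivity of $em'$ and the finite pigeonhole---is exactly how that deduction goes.
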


\begin{theorem}\label{t4.5}
Let $(X, d)$ be an unbounded metric space and let $n\ge 2$ be an integer number such that the inequality
\begin{equation}\label{e4.19}
\left|\Omega_{\infty, \tilde r}^{X}\right|\le n
\end{equation}
holds for every $\Omega_{\infty, \tilde r}^{X}.$ Then the following statements are equivalent.
\begin{enumerate}
\item[\rm(i)] For every scaling sequence $\tilde r,$ the function $\rho^{0}: V(G_{X, \tilde r})\to\mathbb R$ defined as $$\rho^{0}(\nu)=\tilde{\tilde d}_{\tilde r}(\tilde x), \,\, \tilde x\in\nu\in V(G_{X, \tilde r})$$ is injective.
\item[\rm(ii)] For every scaling sequence $\tilde r,$ the inequality
\begin{equation}\label{e4.20}
\left|V(G_{X, \tilde r})\right|\le n
\end{equation}
holds.
\end{enumerate}
If, in addition, $\tilde r$ is a scaling sequence such that
\begin{equation}\label{e4.21}
\rho^{0}(\nu_1)=\rho^{0}(\nu_2)
\end{equation}
for some distinct $\nu_1, \nu_2\in V(G_{X, \tilde r})$, then we have
\begin{equation}\label{e4.22}
\left|V(G_{X, \tilde r})\right|\ge \mathfrak{c},
\end{equation}
where $\mathfrak{c}$ is the continuum.
\end{theorem}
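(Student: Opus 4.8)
The plan is to prove the final (continuum) assertion first, since it carries the main construction and yields the implication (ii)$\Rightarrow$(i) for free. Throughout I use that, by Theorem~\ref{t2.12}, the pretangent spaces are exactly the maximal cliques of $G_{X,\tilde r}$, that $\rho^0(\nu)=\tilde{\tilde d}_{\tilde r}(\tilde x)$ is well defined on classes, and that $\rho^0$ is preserved under restriction to a subsequence, i.e. $\tilde{\tilde d}_{\tilde r'}(\tilde x')=\tilde{\tilde d}_{\tilde r}(\tilde x)$. So assume $\nu_1\neq\nu_2$ in $V(G_{X,\tilde r})$ with $\rho^0(\nu_1)=\rho^0(\nu_2)=:t$, and fix $\tilde x=(x_n)\in\nu_1$, $\tilde y=(y_n)\in\nu_2$. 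Since $\nu_1\neq\nu_2$ we have $\tilde x\not\equiv\tilde y$, so $\lim_n d(x_n,y_n)/r_n$ is not $0$; as the $\liminf$ is nonnegative this forces $L:=\limsup_n d(x_n,y_n)/r_n>0$. I would fix a strictly increasing $(n_j)_{j\in\mathbb N}$ with $d(x_{n_j},y_{n_j})/r_{n_j}\to L$, set $J:=\{n_j:j\in\mathbb N\}$, and for each $S\subseteq J$ define $\tilde z^{S}=(z_n^{S})$ by $z_n^{S}=x_n$ if $n\in S$ and $z_n^{S}=y_n$ otherwise. I would check $\tilde z^{S}\in Seq(X,\tilde r)$: both $d(x_n,p)/r_n$ and $d(y_n,p)/r_n$ tend to $t$, hence $d(z_n^{S},p)/r_n\to t$, and $d(z_n^{S},p)\to\infty$ because both defining sequences lie in $\tilde X_{\infty}$.

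The key point is then a comparison. For $S,S'\subseteq J$ one has $d(z_n^{S},z_n^{S'})=d(x_n,y_n)$ on $S\triangle S'$ and $0$ off it. If $S\triangle S'$ is finite, then $d(z_n^{S},z_n^{S'})/r_n=0$ for large $n$, so $\tilde z^{S}\equiv\tilde z^{S'}$; if $S\triangle S'$ is infinite, then along the infinite set $S\triangle S'\subseteq J$ the ratio tends to $L>0$, so $\limsup_n d(z_n^{S},z_n^{S'})/r_n\ge L>0$ and $\tilde z^{S}\not\equiv\tilde z^{S'}$. Consequently $S\mapsto[\tilde z^{S}]_{\equiv}$ factors through an injection of $\mathcal P(J)/\mathrm{fin}$ into $V(G_{X,\tilde r})$. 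Since $J$ is countably infinite, each $\mathrm{fin}$-class is countable, so if $\kappa$ is the number of classes then $\mathfrak c=\kappa\cdot\aleph_0=\max(\kappa,\aleph_0)$, forcing $\kappa=\mathfrak c$. This gives \eqref{e4.22}. Because $\mathfrak c>n$, the contrapositive of this establishes (ii)$\Rightarrow$(i): if $\rho^0$ failed to be injective for some $\tilde r$, the same $\tilde r$ would violate \eqref{e4.20}.

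For (i)$\Rightarrow$(ii) I would argue by contradiction: assume (i) and suppose $|V(G_{X,\tilde r})|\ge n+1$ for some $\tilde r$. Choose distinct $\mu_1,\dots,\mu_{n+1}\in V(G_{X,\tilde r})$ with representatives $\tilde x^{i}\in\mu_i$. By injectivity of $\rho^0$ the numbers $t_i:=\rho^0(\mu_i)$ are pairwise distinct. I would apply Lemma~\ref{l4.2} to the finite family $\{\tilde x^{1},\dots,\tilde x^{n+1}\}$ (condition \eqref{e4.14} holds since each $\lim_n d(x^{i}_n,p)/r_n=t_i$ is finite) to obtain a subsequence $\tilde r'$ along which $\{\tilde x^{i}{}'\}$ is self-stable. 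Since $\tilde{\tilde d}_{\tilde r'}(\tilde x^{i}{}')=t_i$, the classes $[\tilde x^{i}{}']$ in $G_{X,\tilde r'}$ still carry the pairwise distinct $\rho^0$-values $t_i$, so they remain $n+1$ distinct vertices, and they are pairwise adjacent by self-stability. Extending this self-stable family to a maximal one by Zorn's lemma yields a pretangent space $\Omega_{\infty,\tilde r'}^{X}$ with $|\Omega_{\infty,\tilde r'}^{X}|\ge n+1$, contradicting the standing hypothesis \eqref{e4.19}. Hence $|V(G_{X,\tilde r})|\le n$ for every $\tilde r$.

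The main obstacle is the construction in the second paragraph: one must confirm that the binary sequences $\tilde z^{S}$ survive as genuinely distinct points of the net, and this rests precisely on choosing the index set $J$ so that the normalized distances stay bounded away from $0$, whereby equivalence is controlled exactly by finiteness of the symmetric difference; after that, the count $|\mathcal P(\mathbb N)/\mathrm{fin}|=\mathfrak c$ is routine. In the third paragraph the only delicate point is that restricting to a subsequence might a priori merge distinct vertices, and it is exactly the injectivity of $\rho^0$ (together with its invariance under passage to subsequences) that rules this out.
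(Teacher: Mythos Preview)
Your proof is correct and follows essentially the same strategy as the paper: the continuum assertion via binary mixing of representatives of $\nu_1,\nu_2$ indexed by subsets of a set where the normalized distance attains its $\limsup$, followed by the count $|\mathcal P(J)/\mathrm{fin}|=\mathfrak c$, and the direction (i)$\Rightarrow$(ii) via Lemma~\ref{l4.2} to build an oversized pretangent space. The only cosmetic difference is that the paper additionally arranges its index set $\mathbb N_e$ to have infinite complement in $\mathbb N$ (this is used later for Corollary~\ref{c4.8}, not for the theorem itself), whereas your quotient-by-finite formulation does not need this.
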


\begin{remark}\label{r4.6}
For every vertex $\nu\in V(G_{X, \tilde r})$ there is a pretangent space $\left(\Omega_{\infty, \tilde r}^{X}, \rho\right)$ such that $\nu\in\Omega_{\infty, \tilde r}^{X}.$ Then $\rho^{0}(\nu)$ is the distance from $\nu$ to the distinguished point $\nu_{0}=\tilde X_{\infty, \tilde r}^{0}$ in the metric space $\left(\Omega_{\infty, \tilde r}^{X}, \rho\right),$ $\rho^{0}(\nu)=\rho(\nu_{0}, \nu).$
\end{remark}

\begin{proof}[Proof of Theorem \ref{t4.5}.]
(i)$\Rightarrow$(ii) Let $\rho^{0}: V(G_{X, \tilde r})\to\mathbb R$ be injective for every $\tilde r.$ Suppose (ii) does not hold. Then there is a scaling sequence $\tilde r_{1}$ such that
$$\left|V(G_{X, \tilde r_{1}})\right|\ge n+1.$$ Consequently, we can find $\tilde x^{0}\in\tilde X_{\infty, \tilde r_1}^{0}$ and $\tilde x_{1}, \ldots, \tilde x_{n}\in Seq(\tilde X, \tilde r_{1})$ such that
\begin{equation}\label{e4.23}
0=\tilde{\tilde d}_{\tilde r_{1}}(\tilde x_{0})<\tilde{\tilde d}_{\tilde r_{1}}(\tilde x_{k})<\tilde{\tilde d}_{\tilde r_{1}}(\tilde x_{k+1})<\infty
\end{equation}
for every $k\in\{1, \ldots, n-1\}.$ By Lemma~\ref{l4.2} there is an infinite subsequence $\tilde r_{1}^{'}$ of the sequence $\tilde r_{1}$ such that the set $\{\tilde x'_{0}, \tilde x'_{1}, \ldots, \tilde x'_{n}\}$ is self-stable. Let $\tilde X_{\infty, \tilde r}\supseteq\{\tilde x'_{0}, \tilde x'_{1}, \ldots, \tilde x'_{n}\}$ and let $(\Omega_{\infty, \tilde r'_{1}}^{X}, \rho)$ be the metric identification of $\tilde X_{\infty, \tilde r'}$. Write $\nu_{i}=\pi(\tilde x'_{i}),$ $i=0, \ldots, n,$ where $\pi: \tilde X_{\infty, \tilde r'_{1}}\to\Omega_{\infty, \tilde r'_{1}}^{X}$ is the natural projection. Now \eqref{e4.23} implies that
$$
0=\rho(\nu_{0}, \nu_{0})<\rho(\nu_{0}, \nu_{1})<\ldots<\rho(\nu_{0}, \nu_{n}).
$$
Consequently $\nu_{0}, \ldots, \nu_{n}$ are distinct points of the space $\Omega_{\infty, \tilde r}^{X}$. Thus
$$
\left|\Omega_{\infty, \tilde r'_{1}}^{X}\right| \ge n+1,
$$
which contradicts \eqref{e4.19} with $\tilde r=\tilde r'_{1}.$

(ii)$\Rightarrow$(i) Suppose now that (i) does not hold. Then there exist $\tilde r$ and $\nu_1, \nu_2\in V(G_{X, \tilde r})$ such that $\nu_1\ne\nu_2$ and \eqref{e4.21} holds. It suffices to show that inequality \eqref{e4.22} holds for an infinite subsequence $\tilde r'$ of $\tilde r.$ Let $\tilde x^{1}=(x_{n}^{1})_{n\in\mathbb N}\in\nu_{1}$ and $\tilde x^{2}=(x_{n}^{2})_{n\in\mathbb N}\in\nu_{2}$ and let $p\in X.$ Since $\nu_{1}\ne\nu_{2}$ and $\rho^{0}(\nu_1)=\rho^{0}(\nu_2),$ we have
\begin{equation}\label{e4.24}
\lim_{n\to\infty}\frac{d(x_{n}^{1}, p)}{r_n}=\lim_{n\to\infty}\frac{d(x_{n}^{2}, p)}{r_n}>0
\end{equation}
and
\begin{equation*}
\quad \infty>\limsup_{n\to\infty}\frac{d(x_{n}^{1}, x_{n}^{2})}{r_n}>0.
\end{equation*}
Let $\mathbb{N}_e$ be an infinite subset of $\mathbb{N}$ such that $\mathbb{N} \setminus \mathbb{N}_e$ is also infinite and
\begin{equation}\label{e4.25}
\limsup_{n\to\infty} \frac{d(x_n^{1}, x_n^{2})}{r_n} = \lim_{\substack{n \to \infty\\ n \in \mathbb{N}_e}} \frac{d(x_{n}^{1}, x_{n}^{2})}{r_n}.
\end{equation}

To prove \eqref{e4.22} we consider a relation $\asymp$ on the set $2^{\mathbb N_{e}}$ of all subsets of $\mathbb N_{e}$ defined by the rule: $A\asymp B,$ if and only if the set $$A\bigtriangleup B=(A\setminus B)\cup (B\setminus A)$$ is finite, $|A\bigtriangleup B|<\infty.$ It is clear that $\asymp$ is reflexive and symmetric. Since for all $A, B, C\subseteq\mathbb N_{e}$ we have
$$
A\bigtriangleup C\subseteq (A\bigtriangleup B)\cup (B\bigtriangleup C),
$$
the relation $\asymp$ is transitive. Thus $\asymp$ is an equivalence on $2^{\mathbb N_{e}}$. If $A\subseteq\mathbb N_{e}$, then for every $B\subseteq\mathbb N_{e}$ we have
\begin{equation}\label{e4.26}
B=(B\setminus A)\cup (A\setminus (A\setminus B)).
\end{equation}
For every $A\subseteq\mathbb N_{e}$ write
$$
[A] := \{B\subseteq\mathbb N_{e}: B\asymp A\}.
$$
The set of all finite subsets of $\mathbb N_{e}$ is countable. Consequently equality \eqref{e4.26} implies $\bigl|[A]\bigr| = \aleph_0$ for every $A\subseteq\mathbb N_{e}$. Hence we have
\begin{equation}\label{e4.27}
\bigl|\{[A]: A\subseteq\mathbb N_{e}\}\bigr| = \bigl|2^{\mathbb N_{e}}\bigr| = \mathfrak{c}.
\end{equation}
Let $\mathbf{\mathcal{N}}\subseteq 2^{\mathbb N_{e}}$ be a set such that:

$\bullet$ For every $A\subseteq\mathbb N_{e}$ there is $N\in\mathbf{\mathcal{N}}$ with $A\asymp N$ holds;

$\bullet$ The implication
\begin{equation}\label{e4.28}
(N_1\asymp N_2)\Rightarrow (N_1 = N_2)
\end{equation}
holds for all $N_1, N_2\in\mathbf{\mathcal{N}}$.

It follows from \eqref{e4.27} that $|\mathbf{\mathcal{N}}|=\mathfrak{c}$. For every $N \in \mathcal{N}$ define the sequence $\tilde x(N) = (x_{n}(N))_{n\in\mathbb N}$ as
\begin{equation}\label{e4.29}
x_{n}(N):=
\begin{cases}
        x_{n}^{1}& \mbox{if} $ $ n\in N\\
        x_{n}^{2}& \mbox{if}$ $ n\in\mathbb N\setminus N,\\
         \end{cases}
\end{equation}
where $(x_{n}^{1})_{n\in\mathbb N},$ $(x_{n}^{2})_{n\in\mathbb N}\in Seq(X, \tilde r)$ satisfy \eqref{e4.24} and \eqref{e4.25}. It follows from \eqref{e4.24} and \eqref{e4.25} that
$$\lim_{n\to\infty}\frac{d(x_{n}(N), p)}{r_n}=\tilde{\tilde d}_{\tilde r}(\tilde x^{1})=\tilde{\tilde d}_{\tilde r}(\tilde x^{2})$$ for every $N\in\mathbf{\mathcal{N}}.$ Thus $\tilde x(N)\in Seq(\tilde X, \tilde r).$ If $N_1$ and $N_2$ are distinct elements of $\mathbf{\mathcal{N}},$ then the equality
$$d(x_{n}(N_1), x_{n}(N_2))=d(x_{n}^{1}, x_{n}^{2})$$ holds for every $n\in N_{1}\bigtriangleup N_{2}.$
Using \eqref{e4.25} and the definition of $\asymp$ we see that the set $N_{1}\bigtriangleup N_{2}$ is infinite for all distinct $N_{1}, N_{2}\in\mathbf{\mathcal{N}}.$ Consequently, we have
\begin{equation}\label{e4.30}
\limsup_{n\to\infty}\frac{d(x_{n}(N_1), x_{n}(N_2))}{r_n}>0\quad\text{and}\quad\liminf_{n\to\infty}\frac{d(x_{n}(N_1), x_{n}(N_2))}{r_n}=0.
\end{equation}
For every $N\in\mathbf{\mathcal{N}}$ write
\begin{equation}\label{e4.31}
\nu_{N}=\{\tilde x\in Seq(X, \tilde r): \tilde d_{\tilde r}(\tilde x, \tilde x(N))=0\}.
\end{equation}
The first inequality in \eqref{e4.30} implies that $\nu_{N_1}\ne\nu_{N_2}$ if $N_1\ne N_2.$ From $|\mathcal{N}| = \mathfrak{c}$, we obtain
$$
|\{\nu_{N}: N \in \mathcal{N}\}| = \mathfrak{c}.
$$
Inequality \eqref{e4.22} follows.
\end{proof}

\begin{remark}\label{r4.9}
The existence of continuum many sets $A_\gamma \subseteq  \mathbb{N}$ satisfying, for all  distinct $\gamma_1$ and $\gamma_2$, the equalities
$$
|A_{\gamma_1}\setminus A_{\gamma_2}| = |A_{\gamma_2}\setminus A_{\gamma_1}| = |A_{\gamma_1}\cap A_{\gamma_2}| = \aleph_0
$$
are well know. See, for example, Problem 41 of Chapter 4 in~\cite{KT}.
\end{remark}

\begin{corollary}\label{c4.8}
Let $(X, d)$ be an unbounded metric space, $n\ge 2$ be an integer number such that $\left|\Omega_{\infty, \tilde r}^{X}\right|\le n$ holds for every pretangent space $\Omega_{\infty, \tilde r}^{X}$. Then, for every $\tilde r$, we have
$$
\text{either } \left|\mathbf{\Omega_{\infty, \tilde r}^{X}}\right|\le 2^{n-1} \text{ or } \left|\mathbf{\Omega_{\infty, \tilde r}^{X}}\right|\ge \mathfrak{c},
$$
where $\left|\mathbf{\Omega_{\infty, \tilde r}^{X}}\right|$ is the cardinal number of distinct pretangent spaces to $(X,d)$ at infinity with respect to $\tilde r$.
\end{corollary}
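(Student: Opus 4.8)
The plan is to fix a scaling sequence $\tilde r$ and translate the statement into the language of the net $G_{X, \tilde r}$. By Theorem~\ref{t2.12} the pretangent spaces $\Omega_{\infty, \tilde r}^{X}$ are exactly the maximal cliques of $G_{X, \tilde r}$, so $\left|\mathbf{\Omega_{\infty, \tilde r}^{X}}\right|$ equals the number of maximal cliques of $G_{X, \tilde r}$. By Proposition~\ref{Pr2.8}(i) the vertex $\nu_{0}=\tilde X_{\infty, \tilde r}^{0}$ is dominating, hence it lies in every maximal clique. I would then split into two cases according to whether the function $\rho^{0}\colon V(G_{X, \tilde r})\to\mathbb R$ of Theorem~\ref{t4.5} is injective.

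In the first case $\rho^{0}$ is injective on $V(G_{X, \tilde r})$. I claim this forces $\left|V(G_{X, \tilde r})\right|\le n$. Indeed, if there were $n+1$ distinct vertices, injectivity would produce $n+1$ distinct values of $\rho^{0}=\tilde{\tilde d}_{\tilde r}$, one of which (the value at $\nu_{0}$) equals $0$; choosing representatives I obtain sequences $\tilde x_{0}, \ldots, \tilde x_{n}$ with $0=\tilde{\tilde d}_{\tilde r}(\tilde x_{0})<\tilde{\tilde d}_{\tilde r}(\tilde x_{1})<\cdots<\tilde{\tilde d}_{\tilde r}(\tilde x_{n})<\infty$. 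This is precisely the configuration constructed in the proof of the implication (i)$\Rightarrow$(ii) of Theorem~\ref{t4.5}: applying Lemma~\ref{l4.2} I pass to a subsequence $\tilde r'$ along which $\{\tilde x_{0}', \ldots, \tilde x_{n}'\}$ becomes self-stable, the strict inequalities between the $\tilde{\tilde d}$-values are preserved, and extending to a maximal self-stable set yields a pretangent space $\Omega_{\infty, \tilde r'}^{X}$ with at least $n+1$ distinct points, contradicting the hypothesis \eqref{e4.19} applied to $\tilde r'$. Thus $\left|V(G_{X, \tilde r})\right|\le n$. Since every maximal clique contains the dominating vertex $\nu_{0}$, it is completely determined by the subset of the at most $n-1$ remaining vertices that it contains, so the number of maximal cliques is at most $2^{n-1}$, i.e. $\left|\mathbf{\Omega_{\infty, \tilde r}^{X}}\right|\le 2^{n-1}$.

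In the second case $\rho^{0}$ is not injective on $V(G_{X, \tilde r})$. Then the final assertion of Theorem~\ref{t4.5}, whose standing hypothesis \eqref{e4.19} is exactly our assumption, gives $\left|V(G_{X, \tilde r})\right|\ge\mathfrak c$. Now I use that the maximal cliques cover the whole vertex set: by \eqref{e2.10} every vertex lies in some pretangent space, and by \eqref{e4.19} each such space has at most $n$ elements. Hence, writing $M$ for the number of maximal cliques, the union bound gives $\left|V(G_{X, \tilde r})\right|\le M\cdot n$, so $M\cdot n\ge\mathfrak c$. As $n$ is finite this forces $M$ to be infinite, and therefore $M=M\cdot n\ge\mathfrak c$, i.e. $\left|\mathbf{\Omega_{\infty, \tilde r}^{X}}\right|\ge\mathfrak c$. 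Combining the two cases yields the stated dichotomy.

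The only genuinely delicate point is the first case: one must verify that injectivity of $\rho^{0}$ is used only for the single sequence $\tilde r$ under consideration rather than for all scaling sequences simultaneously, so that $\left|V(G_{X, \tilde r})\right|\le n$ is a per-$\tilde r$ consequence of the construction in the proof of Theorem~\ref{t4.5} (the contradiction is obtained by invoking \eqref{e4.19} for the derived sequence $\tilde r'$, not for $\tilde r$ itself). Once this observation is granted, the remainder reduces to the dominating-vertex count and elementary cardinal arithmetic.
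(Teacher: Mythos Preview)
Your proof is correct and follows the same overall strategy as the paper's: split on whether $\rho^{0}$ is injective on $V(G_{X,\tilde r})$, and in the injective case use the per-$\tilde r$ argument from the proof of Theorem~\ref{t4.5}\,(i)$\Rightarrow$(ii) to get $|V(G_{X,\tilde r})|\le n$, then count subsets containing the dominating vertex $\nu_0$.

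The only substantive difference is in the non-injective case. The paper does not use the mere cardinality bound $|V(G_{X,\tilde r})|\ge\mathfrak c$ together with a union bound; instead it appeals directly to the structure of the family $\{\nu_N:N\in\mathcal N\}$ produced in the proof of Theorem~\ref{t4.5}: by \eqref{e4.30} any two distinct $\nu_{N_1},\nu_{N_2}$ are \emph{not} mutually stable, hence non-adjacent in $G_{X,\tilde r}$, so no maximal clique can contain more than one of them, which immediately forces at least $\mathfrak c$ maximal cliques. Your counting argument via $M\cdot n\ge\mathfrak c$ is also valid and has the virtue of being self-contained once Theorem~\ref{t4.5} is quoted as a black box, but it relies again on the hypothesis \eqref{e4.19} to bound the clique sizes, whereas the paper's argument extracts the conclusion directly from the independence of the $\nu_N$'s without re-invoking that bound.
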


\begin{proof}
Since every pretangent space $\Omega_{\infty, \tilde r}^{X}$ is a subset of $V(G_{X, \tilde r}),$ inequality \eqref{e4.20} implies that the number of pretangent is less than or equal to $2^{n-1}$ if $\rho^{0}: V(G_{X, \tilde r})\to \mathbb{R}$ is injective. Otherwise \eqref{e4.30} implies that distinct $\nu_{N_1}$ and $\nu_{N_2}$ defined by \eqref{e4.31} belong to the distinct pretangent spaces $^{1}\Omega_{\infty, \tilde r}^{X}$ and $^{2}\Omega_{\infty, \tilde r}^{X}$.
\end{proof}

Recall that a graph $G=(V, E)$ is trivial if $|V|=1.$ Moreover, if $|V|=2$ and $G$ is connected, then $G$ is called a $1$-path (see Figure~1).

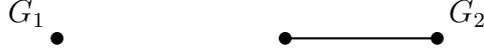
\begin{figure}[h]
\begin{center}
\begin{tikzpicture}[scale=1,thick]
\coordinate [label=above left:$G_1$] (A) at (0,0);
\coordinate (B) at (3,0);
\coordinate [label=above right:$G_2$] (C) at (5,0);
\draw (B) -- (C);
\draw [fill=black, draw=black] (A) circle (2pt);
\draw [fill=black, draw=black] (B) circle (2pt);
\draw [fill=black, draw=black] (C) circle (2pt);
\end{tikzpicture}
\end{center}
\caption{$G_1$ is trivial and $G_2$ is an 1-path.}
\label{ris1}
\end{figure}

Theorem~\ref{t4.5} and Theorem~\ref{t4.3} imply the next result for $n=2$.

\begin{corollary}\label{c4.9}
Let $(X, d)$ be an unbounded metric space. Then the following statements are equivalent.
\begin{enumerate}
\item[\rm(i)] The inequality
\begin{equation}\label{e4.2}
\left|\Omega_{\infty, \tilde r}^{X}\right|\le 2
\end{equation}
holds for every $\Omega_{\infty, \tilde r}^{X}$.
\item[\rm(ii)] The limit relation
\begin{equation}\label{e4.3}
\lim_{x,y\to\infty} F_2 (x, y)=0
\end{equation}
holds.
\item[\rm(iii)] For every scaling sequence $\tilde r$, the net $G_{X, \tilde r}$ of pretangent spaces is either trivial or this is a $1$-path.
\end{enumerate}
\end{corollary}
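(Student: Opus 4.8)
The plan is to split the claimed triple equivalence into the two halves that the preceding results were tailored for. The equivalence (i)$\Leftrightarrow$(ii) is nothing but Theorem~\ref{t4.3} specialized to $n=2$, so all the real work lies in tying (iii) to these. First I would reduce statement (iii) to a purely cardinal condition on the nets. Since $\nu_{0}=\tilde X_{\infty,\tilde r}^{0}$ is a dominating vertex of $G_{X,\tilde r}$ by statement (i) of Proposition~\ref{Pr2.8}, the vertex set is never empty, a net with two vertices is automatically connected, and a net with one vertex is trivial. Hence, for a fixed $\tilde r$, the net $G_{X,\tilde r}$ is trivial or a $1$-path precisely when $\left|V(G_{X,\tilde r})\right|\le 2$. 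Consequently (iii) is exactly statement (ii) of Theorem~\ref{t4.5} taken with $n=2$.

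Next, assuming (i), I would invoke Theorem~\ref{t4.5} with $n=2$: its statements (i) and (ii) are then equivalent, so (iii) will follow once I show that the function $\rho^{0}$ is injective on $V(G_{X,\tilde r})$ for every scaling sequence $\tilde r$. The heart of the argument is to rule out non-injectivity. Suppose $\rho^{0}(\nu_{1})=\rho^{0}(\nu_{2})=a$ for distinct vertices $\nu_{1},\nu_{2}$ of some $G_{X,\tilde r}$; since $\rho^{0}(\nu)=0$ holds only for $\nu=\nu_{0}$, we get $a>0$. Pick $\tilde x^{1}\in\nu_{1}$, $\tilde x^{2}\in\nu_{2}$ and $\tilde z\in\nu_{0}$. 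Because $\nu_{1}\ne\nu_{2}$, the number $L:=\limsup_{n\to\infty}d(x_{n}^{1},x_{n}^{2})/r_{n}$ is strictly positive, and it is finite by the triangle inequality together with $\tilde x^{1},\tilde x^{2}\in Seq(X,\tilde r)$. I would then choose a subsequence along which this $\limsup$ is attained as a limit, and apply Lemma~\ref{l4.2} to the countable family $\{\tilde x^{1},\tilde x^{2},\tilde z\}$ to refine it further into a subsequence $\tilde r'$ making the three sequences mutually stable. Passing to a subsequence preserves the $\rho^{0}$-values, so along $\tilde r'$ we obtain $\tilde d_{\tilde r'}(\tilde x^{1},\tilde x^{2})=L>0$ and $\tilde d_{\tilde r'}(\tilde x^{i},\tilde z)=\tilde{\tilde d}_{\tilde r'}(\tilde x^{i})=a>0$ (the latter by statement (vi) of Proposition~\ref{pr2.2}). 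Embedding $\{\tilde x^{1},\tilde x^{2},\tilde z\}$ into a maximal self-stable set via Zorn's lemma, their images are three pairwise distinct, pairwise adjacent vertices, i.e. a clique of size $3$, hence (by Theorem~\ref{t2.12}) a pretangent space with at least three points. This contradicts (i) applied to the scaling sequence $\tilde r'$; therefore $\rho^{0}$ is injective, and (iii) follows.

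The converse (iii)$\Rightarrow$(i) is immediate: every pretangent space is, by Theorem~\ref{t2.12}, a maximal clique of $G_{X,\tilde r}$ and hence a subset of $V(G_{X,\tilde r})$, so $\left|\Omega_{\infty,\tilde r}^{X}\right|\le\left|V(G_{X,\tilde r})\right|\le 2$ whenever (iii) holds. I expect the only delicate point to be the subsequence extraction: one must realize the positive value $L$ rather than the $\liminf$ (which could vanish), so that the two refined sequences stay at positive distance and the triple genuinely becomes a $3$-clique. Keeping track that the diagonal refinement of Lemma~\ref{l4.2} does not destroy the already-fixed limit $L$, and that $\tilde z$ remains in $\tilde X_{\infty,\tilde r'}^{0}$, are the bookkeeping details that make this step rigorous.
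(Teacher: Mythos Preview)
Your proposal is correct and follows essentially the same route as the paper's proof: both obtain (i)$\Leftrightarrow$(ii) from Theorem~\ref{t4.3} with $n=2$, recast (iii) as the cardinality bound $|V(G_{X,\tilde r})|\le 2$ via Theorem~\ref{t4.5}, and reduce (i)$\Rightarrow$(iii) to showing $\rho^{0}$ is injective by building, from a hypothetical pair $\nu_1\ne\nu_2$ with $\rho^{0}(\nu_1)=\rho^{0}(\nu_2)>0$, a subsequence $\tilde r'$ along which $\nu_0,\nu_1,\nu_2$ land in a single pretangent space of cardinality $\ge 3$. Your version is simply more explicit about the subsequence extraction (realizing the $\limsup$ first, then invoking Lemma~\ref{l4.2}); note that the second refinement is in fact unnecessary, since once $d(x_n^1,x_n^2)/r_n\to L$ the mutual stability of $\tilde z$ with each $\tilde x^i$ already follows from Proposition~\ref{pr2.2}(vi).
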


\begin{proof}
It suffices to note that the function $\rho^0\colon V(G_{X, \tilde{r}}) \to \mathbb{R}$ is injective for every $\tilde{r}$ if (i) holds. Indeed, if $\tilde{r}$ is a scaling sequence such that $\rho^0(\nu_1) = \rho^0(\nu_2)$ and $\nu_1$, $\nu_2$ are distinct vertices of $G_{X, \tilde{r}}$, then $\rho^0(\nu_1) = \rho^0(\nu_2) >0$. Consequently we can find a subsequence $\tilde{r}'$ of $\tilde{r}$ and $\Omega_{\infty, \tilde r'}^X$ such that $\nu_0$, $\nu_1$, $\nu_2 \in \Omega_{\infty, \tilde r'}^X$. Hence the inequality $|\Omega_{\infty, \tilde r}^X| \geq 3$ holds which contradicts (i).
\end{proof}

\begin{corollary}\label{c4.10}
Let $(X, d)$ be an unbounded metric space such that every pretangent space to $(X, d)$
at infinity contains at most two points. Then, for every scaling sequence $\tilde r,$ there is a unique pretangent spaces $\Omega_{\infty, \tilde r}^{X}.$
\end{corollary}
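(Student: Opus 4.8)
The plan is to recognize that the hypothesis of the corollary is precisely condition (i) of Corollary~\ref{c4.9} specialized to $n = 2$, and then to read off the structural conclusion about the nets $G_{X, \tilde r}$. First I would note that the statement ``every pretangent space contains at most two points'' is exactly the assertion that $\left|\Omega_{\infty, \tilde r}^{X}\right| \le 2$ holds for every $\Omega_{\infty, \tilde r}^{X}$, which is statement (i) of Corollary~\ref{c4.9}. Hence statement (iii) of that corollary is available: for every scaling sequence $\tilde r$, the net $G_{X, \tilde r}$ is either trivial or a $1$-path.

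Next I would observe that in both of these cases the graph $G_{X, \tilde r}$ is complete. A trivial graph has a single vertex and is complete vacuously, since there are no two distinct vertices to fail adjacency, while a $1$-path is by definition the connected graph on two vertices, i.e.\ $K_2$, whose two vertices are adjacent. Thus, whichever alternative of (iii) occurs, $G_{X, \tilde r}$ is a complete graph, and this holds uniformly for every scaling sequence $\tilde r$.

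Finally I would invoke Proposition~\ref{Pr2.8}(ii), according to which $G_{X, \tilde r}$ is complete if and only if there is a unique pretangent space $\Omega_{\infty, \tilde r}^{X}$. Since $G_{X, \tilde r}$ is complete for every $\tilde r$, the ``only if'' direction yields a unique pretangent space for every $\tilde r$, which is the claim.

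There is essentially no hard step here; the proof is a short deduction from results already established, and the real content lies in Corollaries~\ref{c4.9} and the earlier structure theory. The one point that demands a moment of care is the trivial alternative in (iii): one must explicitly record that a one-vertex graph counts as complete, so that Proposition~\ref{Pr2.8}(ii) applies to both alternatives and not only to the $1$-path. Equivalently, one could bypass the word ``complete'' altogether and argue directly through Theorem~\ref{t2.12}: when $G_{X, \tilde r}$ has at most two mutually adjacent vertices, its entire vertex set $V(G_{X, \tilde r})$ is its unique maximal clique, and since maximal cliques correspond bijectively to pretangent spaces, uniqueness follows at once.
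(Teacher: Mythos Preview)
Your proof is correct. Both your argument and the paper's hinge on Corollary~\ref{c4.9}(iii), but you finish differently. Once the net $G_{X,\tilde r}$ is known to be trivial or a $1$-path, you observe that in either case the graph is complete and invoke Proposition~\ref{Pr2.8}(ii) (or, equivalently, Theorem~\ref{t2.12}) to get uniqueness directly. The paper instead argues by contradiction from scratch: assuming two distinct pretangent spaces $^{1}\Omega_{\infty,\tilde r}^{X}$ and $^{2}\Omega_{\infty,\tilde r}^{X}$, it uses maximality of the corresponding $^{1}\tilde X_{\infty,\tilde r}$ and $^{2}\tilde X_{\infty,\tilde r}$ to show both set differences are nonempty, and since $\tilde X_{\infty,\tilde r}^{0}$ lies in their intersection this forces $|V(G_{X,\tilde r})|\ge 3$, contradicting (iii). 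Your route is shorter and reuses the structural results already in place; the paper's route is more self-contained at this spot and makes the role of maximality explicit. Your remark that the trivial graph must be counted as complete is exactly the small point one needs to check, and your alternative via Theorem~\ref{t2.12} (unique maximal clique) handles both cases uniformly without that caveat.
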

\begin{proof}
Suppose contrary that $^{1}\Omega_{\infty, \tilde r}^{X}$ and $^{2}\Omega_{\infty, \tilde r}^{X}$ are distinct pretangent spaces to $X$ with the same scaling sequence $\tilde r.$ Let $^{1}\tilde X_{\infty, \tilde r}$ and $^{2}\tilde X_{\infty, \tilde r}$ be maximal self-stable subsets of $Seq(X, \tilde r)$ such that $^{i}\Omega_{\infty, \tilde r}^{X}$ is the metric identification of
$^{i}\tilde X_{\infty, \tilde r}$, $i=1,2, \ldots \, .$ Since $^{1}\Omega_{\infty, \tilde r}^{X}\ne^{2}\Omega_{\infty, \tilde r}^{X}$ we have also $^{1}\tilde X_{\infty, \tilde r}\ne^{2}\tilde X_{\infty, \tilde r}.$ It implies
\begin{equation}\label{e4.12}
^{1}\tilde X_{\infty, \tilde r}\setminus^{2}\tilde X_{\infty, \tilde r}\ne\varnothing\ne^{2}\tilde X_{\infty, \tilde r}\setminus^{1}\tilde X_{\infty, \tilde r}
\end{equation}
because $^{1}\tilde X_{\infty, \tilde r}$ and $^{2}\tilde X_{\infty, \tilde r}$ are \emph{maximal} self-stable. Using \eqref{e4.12} we obtain
\begin{equation*}
^{1}\Omega_{\infty, \tilde r}^{X}\setminus^{2}\Omega_{\infty, \tilde r}^{X}\ne^{2}\Omega_{\infty, \tilde r}^{X}\setminus^{1}\Omega_{\infty, \tilde r}^{X}.
\end{equation*}
Moreover, $\tilde X_{\infty, \tilde r}^{0}\in^{1}\Omega_{\infty, \tilde r}^{X}\cap^{2}\Omega_{\infty, \tilde r}^{X}.$ Consequently, we have
\begin{equation*}
\left|\bar{\Omega}_{\infty, \tilde r}^{X}\right|\ge\left|^{1}\Omega_{\infty, \tilde r}^{X}\cap^{1}\Omega_{\infty, \tilde r}^{X}\right|+\left|^{1}\Omega_{\infty, \tilde r}^{X}\setminus^{2}\Omega_{\infty, \tilde r}^{X}\right|+\left|^{2}\Omega_{\infty, \tilde r}^{X}\setminus^{1}\Omega_{\infty, \tilde r}^{X}\right|\ge 3,
\end{equation*}
contrary to statement (iii) of Theorem~\ref{c4.9}.
\end{proof}

The following example shows that, for any $n \geq 3$, the equality
$$
\lim_{x_1, \ldots, x_n\to\infty} F_n(x_1, \ldots, x_n) = 0
$$
is not sufficient for the finiteness of the net $G_{X, \tilde{r}}$. In what follows $\mathbb{C}$ is the complex plane.

\begin{example}\label{ex4.10}
Let $\tilde{r} = (r_m)_{m \in \mathbb{N}}$ be a strictly increasing sequence of positive real numbers such that
$$
\lim_{m\to\infty} \frac{r_{m+1}}{r_m} = \infty,
$$
let $n \geq 2$ be an integer number, let $R_i = \{z \in \mathbb{C}\colon \arg{z} = \theta_i\}$ be the rays starting at origin with the angles of $\theta_i = \frac{\pi i}{2n}$ with the positive real axis, $i=0$, $\ldots$, $n-1$ and let
$$
C_m = \{z \in \mathbb{C}\colon |z|=r_m\}
$$
be the circles in $\mathbb{C}$ with radius $r_m$, $m \in \mathbb{N}$, and the center $0$. Write
$$
X_n:= \left(\bigcup_{i=0}^{n-1} R_i\right) \bigcap \left(\bigcup_{m=1}^{\infty} C_m\right)
$$
and define the distance function $d$ on $X_n$ as
$$
d(z,w) = |z-w|
$$
(see Figure~\ref{ris2} for $X_n$ with $n=3$).

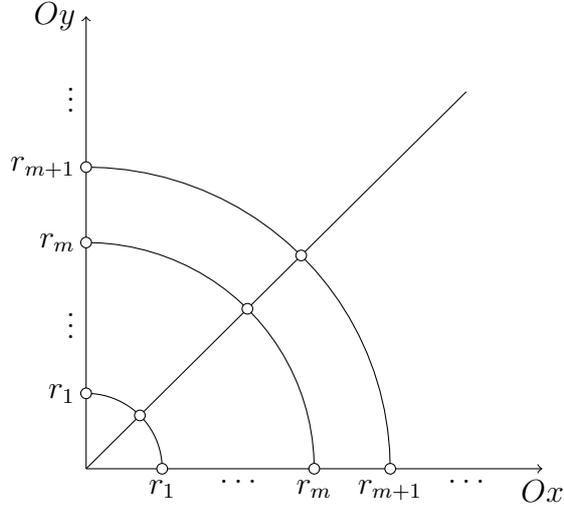
\begin{figure}[h]
\begin{center}
\begin{tikzpicture}[scale=1]
\draw[->] (0,0) -- (6,0) node[below] {$Ox$};
\draw[->] (0,0) -- (0,6) node[left] {$Oy$};
\draw[domain=0:pi/2] plot ({cos(\x r)}, {sin(\x r)});
\draw[domain=0:pi/2] plot ({3*cos(\x r)}, {3*sin(\x r)});
\draw[domain=0:pi/2] plot ({4*cos(\x r)}, {4*sin(\x r)});
\draw[domain=0:5] plot (\x,\x);
\draw [fill=white] (1,0) circle(2pt) node[below] {$r_1$};
\node [below] at (2,0) {$\ldots$};
\draw [fill=white] (3,0) circle(2pt) node[below] {$r_m$};
\draw [fill=white] (4,0) circle(2pt) node[below] {$r_{m+1}$};
\node [below] at (5,0) {$\ldots$};
\draw [fill=white] (0,1) circle(2pt) node[left] {$r_1$};
\node [left] at (0,2) {$\vdots$};
\draw [fill=white] (0,3) circle(2pt) node[left] {$r_m$};
\draw [fill=white] (0,4) circle(2pt) node[left] {$r_{m+1}$};
\node [left] at (0,5) {$\vdots$};
\draw [fill=white] ({cos(pi/4 r)},{sin(pi/4 r)}) circle(2pt);
\draw [fill=white] ({3*cos(pi/4 r)},{3*sin(pi/4 r)}) circle(2pt);
\draw [fill=white] ({4*cos(pi/4 r)},{4*sin(pi/4 r)}) circle(2pt);
\end{tikzpicture}
\end{center}
\caption{The graphical representation of $X_3$. The points of $X_3$ are depicted here as small circles.}
\label{ris2}
\end{figure}

Then we obtain
$$
\lim_{\substack{x_1, \ldots, x_{n+1} \to \infty\\ x_1, \ldots, x_{n+1} \in X_n}} F_{n+1} (x_1, \ldots, x_{n+1}) = 0 < \limsup_{\substack{x_1, \ldots, x_n  \to \infty\\ x_1, \ldots, x_{n} \in X_n}} F_{n} (x_1, \ldots, x_n)
$$
and $|\mathbf{\Omega}_{\infty, \tilde r}^{X_n}|=\mathfrak{c}$. In particular, for $n=3$, the equality
$$
\limsup_{\substack{x, y, z\to\infty\\ x, y, z \in X_3}} F_{3} (x, y, z) = 2\sqrt{2} - 2
$$
holds. (See Figure~\ref{ris3} for all possible pretangent spaces to $X_3$ at infinity with respect to $\tilde{r}$.)

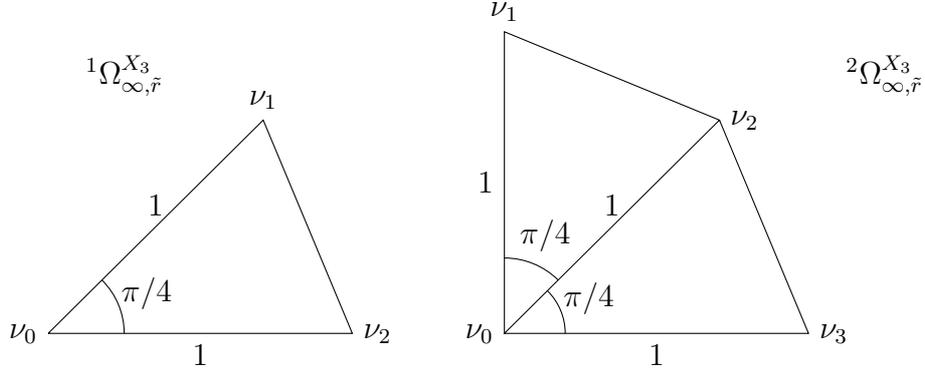
\begin{figure}[h]
\begin{center}
\begin{tikzpicture}[scale=1]
\coordinate [label=left:$\nu_0$] (A) at (0,0);
\coordinate [label=above:$\nu_1$] (B) at ($(45:4)+(A)$);
\coordinate [label=right:$\nu_2$] (C) at ($(0:4)+(A)$);
\draw (A)--(B) node[midway, above] {$1$} --(C)--(A) node[midway, below] {$1$};
\pic [draw, "$\pi/4$", angle eccentricity=1.4, angle radius=1cm] {angle=C--A--B};
\node [above] at ($(A)+(1,3)$) {$^1\Omega_{\infty, \tilde r}^{X_3}$};
\coordinate [label=left:$\nu_0$] (A1) at (6,0);
\coordinate [label=above:$\nu_1$] (B1) at ($(90:4)+(A1)$);
\coordinate [label=right:$\nu_2$] (C1) at ($(45:4)+(A1)$);
\coordinate [label=right:$\nu_3$] (D1) at ($(0:4)+(A1)$);
\draw (A1)--(B1) node[midway, left] {$1$} -- (C1) -- (A1) node[midway, above] {$1$} -- (D1) node[midway, below] {$1$} -- (C1);
\pic [draw, "$\pi/4$", angle eccentricity=1.4, angle radius=1cm] {angle=C1--A1--B1};
\pic [draw, "$\pi/4$", angle eccentricity=1.5, angle radius=.8cm] {angle=D1--A1--C1};
\node [above] at ($(D1)+(1,3)$) {$^2\Omega_{\infty, \tilde r}^{X_3}$};
\end{tikzpicture}
\end{center}
\caption{Every pretangent space $\Omega_{\infty, \tilde r}^{X_3}$ (with respect to $\tilde{r}$ given above) is isometric either $^1\Omega_{\infty, \tilde r}^{X_3}$ or $^2\Omega_{\infty, \tilde r}^{X_3}$.}
\label{ris3}
\end{figure}
\end{example}

It can be shown that for every finite metric space $Y$ there is an unbounded metric space $X$ and a scaling sequence $\tilde r$ such that $Y$ is isometric to a pretangent space $\Omega_{\infty, \tilde r}^{X}.$ We will consider the case when $Y$ is \emph{strongly rigid}.

\begin{definition}\label{D4.12}
A metric space $(Y, \delta)$ is said to be strongly rigid if for all $x, y, z, w\in Y$ the conditions $$\delta(x, y)=\delta(w, z)\quad\text{and}\quad x\ne y$$ imply that $\{x, y\}=\{z, w\}.$ \end{definition}

\begin{remark}
See \cite{J} and \cite{M} for some interesting properties of strongly rigid metric spaces.
\end{remark}

\begin{example}\label{ex4.13}
Let $(Y, \delta)$ be a finite, nonempty and strongly rigid metric space, $Y=\{y_1, ..., y_k\}$ and let $\tilde r=(r_j)_{j\in\mathbb N}$ be a scaling sequence such that
\begin{equation}\label{e4.32}
\lim_{j\to\infty}\frac{r_{j+1}}{r_j}=\infty.
\end{equation}
\end{example}
We will define a metric space $(X, d)$ as a subset of the finite dimensional normed space $l_{k}^{\infty}$ of $k$-tuples $x=(x_1, ..., x_k)$ of real numbers with the sup norm $$\|x\|_{\infty}=\sup_{1\le i\le k}|x_i|.$$
Let $y^{*}$ be a fixed point of $Y.$ The Kuratowski embedding $s: Y\to l_{k}^{\infty}$ can be defined as
\begin{equation}\label{e4.33}
s(y)=\begin{pmatrix}
\delta(y, y_1)-\delta(y_1, y^{*})\\
..........................\\
\delta(y, y_k)-\delta(y_k, y^{*})\\
\end{pmatrix}.
\end{equation}
Write
\begin{equation}\label{e4.34}
X:=\bigcup_{n\in\mathbb N}r_{n}s(Y),
\end{equation}
where $r_{n}s(Y):=\{r_{n}s(y): y\in Y\}$ and define $d(x, y):=\parallel x-y\parallel_{\infty}$ for all $x, y\in X.$ It follows directly from \eqref{e4.33} that
\begin{equation*}
s(y^{*})=\begin{pmatrix}
0\\
.....\\
0\\
\end{pmatrix}.
\end{equation*}
For convenience we can suppose that the distinguished point $p$ of $X$ is $s(y^{*}),$
\begin{equation*}
p=\begin{pmatrix}
0\\
.....\\
0\\
\end{pmatrix}.
\end{equation*}
Hence, for every $x\in X,$ we have
\begin{equation}\label{e4.35}
d(x, p)=\parallel x\parallel_{\infty}.
\end{equation}
Suppose now
$$\tilde x=(x_j)_{j\in\mathbb N}\in Seq (X, \tilde r)$$ such that
\begin{equation}\label{e4.36}
\tilde{\tilde d}_{\tilde r}(\tilde x)=\lim_{j\to\infty}\frac{d(x_j, p)}{r_j}=\lim_{j\to\infty}\frac{\parallel x_{j}\parallel_{\infty}}{r_j}>0.
\end{equation}
By \eqref{e4.34}, for $j\in\mathbb N,$ there are $n\in\mathbb N$ and $y\in Y$ satisfying the equality $x_{j}=r_{n}s(y).$ It is well known that the Kuratowski embeddings are isometric (see, for example, \cite[the proof of Theorem III.8.1]{B}). Consequently
\begin{equation}\label{e4.36*}
\frac{1}{r_j}\parallel x_j\parallel_{\infty}=\frac{r_n}{r_j}\parallel s(y)\parallel_{\infty}=\frac{r_n}{r_j}\delta(y, y^{*}).
\end{equation}
Now \eqref{e4.36} implies $y\ne y^{*}$ for all sufficiently large $j.$ Moreover, using \eqref{e4.32} and \eqref{e4.36*} we obtain $r_n=r_j$ for all sufficiently large $j$ which is an equivalent for $n=j$ for all sufficiently large $j.$ Hence, if $\tilde x=(x_j)_{j\in\mathbb N}$ belongs $Seq(X, \tilde r)$ and $\tilde{\tilde d}_{\tilde r}(x)>0,$ then for every sufficiently large $j$ there is $y(j)\in\mathbb N$ such that
\begin{equation*}
\frac{1}{r_j}\parallel x_j\parallel_{\infty}=\delta(y(j), y^{*}).
\end{equation*}
Since $(Y, \delta)$ is strongly rigid, the limit $\mathop{\lim}\limits_{j\to\infty}\delta(y(j), y^{*})$ exists if and only if there is $y'\in Y$ such that $$y(j)=y'$$ holds for all sufficiently large $j.$ This result and statement (vi) of Proposition~\ref{pr2.2} imply that every two $\tilde x, \tilde y\in Seq(X, \tilde r)$ are mutually stable and
\begin{equation*}
\lim_{j\to\infty}\frac{d(x_j, y_j)}{r_j}=\parallel s(x')- s(y')\parallel_{\infty}=\delta(x', y'),
\end{equation*}
where $x'$ and $y'$ are the points of $Y$ for which
\begin{equation*}
y(j)=y'\quad\text{and}\quad x(j)=x'
\end{equation*}
hold for all sufficiently large $j.$ It is clear that, for every $y\in Y,$ we have
\begin{equation*}
(r_{j}s(y))_{j\in\mathbb N}\in Seq(X, \tilde r).
\end{equation*}
Thus there is the unique pretangent space $\Omega_{\infty, \tilde r}^{X}$ and this space is isometric to $(Y, \delta).$

Analyzing the construction of Example~\ref{ex4.13} we obtain the following proposition.

\begin{proposition}\label{pr4.15}
Let $(Y, \delta)$ be a finite nonempty metric space containing a point $y^{*}$ for which
$$\delta(y^{*}, x)\ne\delta(y^{*}, z)$$ whenever $x$ and $z$ are distinct points of $Y.$ Then there are an unbounded metric space $(X, d)$ and a scaling sequence $\tilde r$ such that:
\begin{enumerate}
\item[\rm(i)] There is a unique pretangent space $\Omega_{\infty, \tilde r}^{X};$
\item[\rm(ii)] The pretangent space $\Omega_{\infty, \tilde r}^{X}$ is tangent;
 \item[\rm(iii)]  There is an isometry $f: \Omega_{\infty, \tilde r}^{X}\to Y$ such that $f(\nu_0)=y^{*},$ where $\nu_{0}=\tilde X_{\infty, \tilde r}^{0}$ is the distinguished point of $\Omega_{\infty, \tilde r}^{X}.$
\end{enumerate}
\end{proposition}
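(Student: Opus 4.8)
The plan is to reuse verbatim the metric space and scaling sequence built in Example~\ref{ex4.13} and to observe that the hypothesis of strong rigidity was invoked there in exactly one place, where it may be replaced by the present, weaker assumption. Assume first that $|Y|\ge 2$ (if $|Y|=1$ the required $\Omega_{\infty,\tilde r}^{X}$ is a one-point space, which is produced by any unbounded $X$ carrying a scaling sequence whose radii grow so fast that every sequence in $Seq(X,\tilde r)$ lies in $\tilde X_{\infty,\tilde r}^{0}$; such an $X$ is easy to exhibit). For $|Y|\ge 2$ I would take the Kuratowski embedding $s\colon Y\to l_{k}^{\infty}$ as in \eqref{e4.33}, put $X:=\bigcup_{n\in\mathbb N}r_n s(Y)$ with $p=s(y^{*})=0$ and a scaling sequence $\tilde r=(r_j)_{j\in\mathbb N}$ obeying \eqref{e4.32}. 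Then $X$ is unbounded, $d(x,p)=\|x\|_\infty$, and $s$ is isometric.

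The key observation is that strong rigidity entered the analysis of Example~\ref{ex4.13} only to guarantee that, for $\tilde x=(x_j)_{j\in\mathbb N}\in Seq(X,\tilde r)$ with $\tilde{\tilde d}_{\tilde r}(\tilde x)>0$, the sequence of labels $y(j)\in Y$ determined by $x_j=r_{n(j)}s(y(j))$ is eventually constant. I would re-derive this from the weaker hypothesis as follows. Writing $\tfrac{\|x_j\|_\infty}{r_j}=\tfrac{r_{n(j)}}{r_j}\,\delta(y(j),y^{*})$ and using that, for large $j$, the value $\delta(y(j),y^{*})$ lies in a finite set bounded away from $0$ and $\infty$, the growth condition \eqref{e4.32} forces $n(j)=j$ for all large $j$, since any other index makes the ratio $r_{n(j)}/r_j$ tend to $0$ or to $\infty$. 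Hence $\tfrac{\|x_j\|_\infty}{r_j}=\delta(y(j),y^{*})$ eventually; being a convergent sequence in a finite set it is eventually constant, and the injectivity of $y\mapsto\delta(y,y^{*})$ then yields a single $y'\in Y$ with $y(j)=y'$ for all large $j$. From here the computation of Example~\ref{ex4.13} applies unchanged: every pair in $Seq(X,\tilde r)$ is mutually stable, so $Seq(X,\tilde r)$ is self-stable and hence is the unique maximal self-stable subset, giving a unique $\Omega_{\infty,\tilde r}^{X}$; and the assignment $\pi(\tilde x)\mapsto y'$ together with $\nu_0\mapsto y^{*}$ is an isometry $f\colon\Omega_{\infty,\tilde r}^{X}\to Y$ with $f(\nu_0)=y^{*}$, because $\rho(\nu_0,\pi(\tilde x))=\tilde{\tilde d}_{\tilde r}(\tilde x)=\delta(y',y^{*})$ and $\rho(\pi(\tilde x),\pi(\tilde y))=\delta(x',y')$. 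This settles (i) and (iii).

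For (ii) I would exploit that the construction is stable under passing to subsequences. If $\tilde r'=(r_{n_k})_{k\in\mathbb N}$ is any subsequence of $\tilde r$, then $r_{n_{k+1}}/r_{n_k}\ge r_{n_k+1}/r_{n_k}\to\infty$, so $\tilde r'$ again satisfies \eqref{e4.32}. Repeating the argument of the previous paragraph for $\tilde r'$ — here $\tfrac{\|x_k\|_\infty}{r_{n_k}}=\tfrac{r_{n(k)}}{r_{n_k}}\delta(y(k),y^{*})$, and the super-geometric growth once more forces the circle index to equal $n_k$ eventually — shows that $\Omega_{\infty,\tilde r'}^{X}$ is likewise isometric to $Y$ and therefore has exactly $|Y|$ points. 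Since the canonical map $em'\colon\Omega_{\infty,\tilde r}^{X}\to\Omega_{\infty,\tilde r'}^{X}$ is always an isometric embedding and now joins two finite metric spaces of the same cardinality, it is surjective; as $\tilde r'$ was arbitrary, $\Omega_{\infty,\tilde r}^{X}$ is tangent by the equivalent characterisations recorded after Definition~\ref{D1.4}.

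The step I expect to be the real obstacle is (ii), specifically the claim that no subsequence produces additional points. The circle indices $n(k)$ range over all of $\mathbb N$ while the normalising radius is $r_{n_k}$, so one must check carefully that the growth gap still isolates the single index $n_k$; once $|\Omega_{\infty,\tilde r'}^{X}|=|Y|$ is secured for every $\tilde r'$, surjectivity of the isometric embedding $em'$ is automatic. The replacement of strong rigidity by injectivity of $\delta(\cdot,y^{*})$ in (i) and (iii) is, by contrast, routine once one locates the single place where rigidity was used.
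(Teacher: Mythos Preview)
Your argument is correct. For (i) and (iii) you follow exactly the route the paper intends: revisit the construction of Example~\ref{ex4.13} and note that strong rigidity is invoked only once, to pass from eventual constancy of $\delta(y(j),y^{*})$ to eventual constancy of $y(j)$, and that injectivity of $y\mapsto\delta(y,y^{*})$ suffices there. Your treatment of the index identification $n(j)=j$ (and $m(k)=n_k$ for subsequences) via the super-geometric growth \eqref{e4.32} is accurate.

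For (ii) you take a genuinely different path from the paper. The paper appeals to Corollary~\ref{c4.4} and Theorem~\ref{t4.3}: one checks that the constructed $X$ satisfies $\lim_{x_1,\ldots,x_k\to\infty}F_k(x_1,\ldots,x_k)=0$ with $k=|Y|$ (any $k$ points of $X$ going to infinity either lie in a single layer $r_n s(Y\setminus\{y^{*}\})$, forcing a repetition, or span at least two layers, forcing $\min d(x_i,p)/\max d(x_i,p)\to 0$), and then Corollary~\ref{c4.4} gives tangency of the $k$-point pretangent space directly. Your route instead re-runs the Example~\ref{ex4.13} analysis for an arbitrary subsequence $\tilde r'$, concludes that every $\Omega_{\infty,\tilde r'}^{X}$ again has exactly $|Y|$ points, and then uses that an isometric embedding between finite sets of equal cardinality is surjective. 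This is more elementary and self-contained, avoiding the $F_n$ machinery entirely; the paper's route is shorter once Theorem~\ref{t4.3} and Corollary~\ref{c4.4} are available, and has the virtue of illustrating those results. The ``obstacle'' you flag---that under $\tilde r'$ the layer index $m(k)$ still ranges over all of $\mathbb N$---is handled correctly by the same growth estimate, since $r_{n_k-1}/r_{n_k}\to 0$ and $r_{n_k+1}/r_{n_k}\to\infty$ as $k\to\infty$.
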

For the proof note that statement (ii) can be obtained by application of Corollary~\ref{c4.4} and Theorem~\ref{t4.3} to $(X, d)$ constructed in Example~\ref{ex4.13}.

A simple modification of Example~\ref{ex4.13} gives us the following result.

\begin{theorem}
For every finite nonempty metric space $(Y, \delta)$ and every $y^* \in Y$ there are an unbounded metric space $(X,d)$, and a scaling sequence $\tilde{r}$ and an isometry $f\colon \Omega_{\infty, \tilde r}^X \to Y$ such that $f(\nu_0) = y^*$ holds and $\Omega_{\infty, \tilde r}^X$ is tangent.
\end{theorem}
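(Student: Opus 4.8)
The plan is to reuse, almost verbatim, the construction of Example~\ref{ex4.13}: fix an enumeration $Y=\{y_1,\dots,y_k\}$ with $y_1=y^*$, take the Kuratowski embedding $s\colon Y\to l_k^{\infty}$ as in \eqref{e4.33} (so that $s(y^*)=0$ and $\|s(y)-s(y')\|_{\infty}=\delta(y,y')$), set $X:=\bigcup_{n\in\mathbb N}r_n s(Y)$ with the sup-metric and a scaling sequence $\tilde r$ satisfying the lacunary condition \eqref{e4.32}, and keep $p=s(y^*)$. The only hypothesis I drop is strong rigidity: whereas Example~\ref{ex4.13} used it to pin down the eventual value of $y(j)$ from the single quantity $\delta(y(j),y^*)$, I will instead control the \emph{size} of every pretangent space directly through the function $F_k$ and then realize $Y$ by constant sequences. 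The degenerate case $k=1$ is treated separately by an easy ad hoc example (for instance $X=\{2^{2^n}\}\subset\mathbb R$ with $r_n=\sqrt{2^{2^n}\cdot 2^{2^{n+1}}}$, for which every convergent ratio $d(x_j,p)/r_j$ necessarily equals $0$, so the only pretangent space is the singleton $\{\nu_0\}$, trivially tangent).

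The key step, replacing strong rigidity, is the estimate
\[
\lim_{x_1,\dots,x_k\to\infty}F_k(x_1,\dots,x_k)=0 .
\]
To prove it I would first observe that in $F_k$ one may assume the $k$ arguments to be pairwise distinct (otherwise a factor $d(x_i,x_j)=0$ makes $F_k$ vanish) and all different from $p$ (since each $d(x_i,p)\to\infty$). Because each shell $r_n s(Y)$ contains exactly one copy of $p$, every shell carries at most $k-1$ admissible points, so $k$ distinct points must occupy at least two distinct shells. Writing $r_N$ for the largest and $r_M$ for the smallest shell occurring among $x_1,\dots,x_k$, the isometry of $s$ gives $\max_i d(x_i,p)\ge r_N\delta_{\min}$, while $\min_i d(x_i,p)\le r_M\delta_{\max}$ and every pairwise distance is at most $Cr_N$, where $\delta_{\min}=\min_{y\ne y^*}\delta(y,y^*)>0$ and $\delta_{\max}=\max_y\delta(y,y^*)$. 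Substituting these bounds into \eqref{e4.13} collapses $F_k$ to a constant multiple of $r_M/r_N\le r_{N-1}/r_N$. As all arguments tend to infinity the top index $N\to\infty$, and \eqref{e4.32} forces $r_{N-1}/r_N\to 0$, whence $F_k\to 0$. By Theorem~\ref{t4.3} this yields $\bigl|\Omega_{\infty,\tilde r}^{X}\bigr|\le k$ for \emph{every} pretangent space and every scaling sequence.

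It then remains to exhibit one pretangent space attaining $k$ points and isometric to $Y$. For each $y_i\ne y^*$ the constant sequence $\tilde c_i:=(r_j s(y_i))_{j\in\mathbb N}$ lies in $Seq(X,\tilde r)$ with $\tilde{\tilde d}_{\tilde r}(\tilde c_i)=\delta(y_i,y^*)>0$, and $\tilde d_{\tilde r}(\tilde c_i,\tilde c_{i'})=\delta(y_i,y_{i'})$, so $\{\tilde c_i\colon y_i\ne y^*\}$ is self-stable; by statement~(iii) of Proposition~\ref{pr2.2} its union with $\tilde X_{\infty,\tilde r}^{0}$ is self-stable, and I extend it to a maximal self-stable set $\tilde X_{\infty,\tilde r}$ with metric identification $\Omega_{\infty,\tilde r}^{X}$. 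Using statement~(vi) of Proposition~\ref{pr2.2}, the classes $\nu_0=\tilde X_{\infty,\tilde r}^{0}$ and $[\tilde c_i]$ satisfy $\rho(\nu_0,[\tilde c_i])=\delta(y^*,y_i)$ and $\rho([\tilde c_i],[\tilde c_{i'}])=\delta(y_i,y_{i'})$; in particular these $k$ classes are pairwise distinct, so $\bigl|\Omega_{\infty,\tilde r}^{X}\bigr|\ge k$. Combined with the bound $\le k$ from the previous step, this gives $\bigl|\Omega_{\infty,\tilde r}^{X}\bigr|=k$, the assignment $f(\nu_0):=y^*$, $f([\tilde c_i]):=y_i$ is a well-defined bijective isometry onto $Y$ with $f(\nu_0)=y^*$, and since $\lim F_k=0$ Corollary~\ref{c4.4} shows that this $k$-point space is tangent.

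The main obstacle is the $F_k$ estimate: one must verify that the two-shell domination argument is uniform over all admissible configurations—in particular that the product of the $\binom{k}{2}$ pairwise distances, which contributes a factor of order $r_N^{\binom{k}{2}}$, cannot compensate the $r_M/r_N$ loss against the denominator of order $r_N^{\binom{k}{2}+1}$—and that the ``at most $k-1$ points per shell'' pigeonhole is applied only after discarding coincident arguments. Once $\lim F_k=0$ is secured, everything else is bookkeeping resting on Proposition~\ref{pr2.2}, Theorem~\ref{t4.3} and Corollary~\ref{c4.4}.
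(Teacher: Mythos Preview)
Your argument is correct. The paper gives no proof, only the hint that ``a simple modification of Example~\ref{ex4.13}'' suffices, so one must reconstruct what is meant. You keep the construction of Example~\ref{ex4.13} verbatim and replace the single appeal to strong rigidity by the global estimate $\lim F_k=0$, after which Theorem~\ref{t4.3} and Corollary~\ref{c4.4} finish the job. Your shell pigeonhole (``$k$ distinct non-$p$ points force at least two shells, so $F_k\lesssim r_M/r_N\le r_{N-1}/r_N\to 0$'') is sound, and the homogeneity count you flag as the main obstacle does balance exactly as you say.

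There is, however, a more local modification that stays entirely inside the logic of Example~\ref{ex4.13} and is probably what the authors intended. In that example strong rigidity is invoked only to deduce from the existence of $\lim_j\delta(y(j),y^*)$ that $y(j)$ is eventually constant. Drop this and instead use mutual stability with \emph{all} the constant sequences $\tilde c_i$: if $\tilde x\in\tilde X_{\infty,\tilde r}$ satisfies $\tilde{\tilde d}_{\tilde r}(\tilde x)>0$, then (exactly as in Example~\ref{ex4.13}, using only~\eqref{e4.32}) $x_j=r_j s(y(j))$ for large $j$, whence $\lim_j\delta(y(j),y_i)$ exists for every $i$. Since $Y$ is finite, some $y_{i_0}$ occurs infinitely often among the $y(j)$, so that limit is $0$ and $\tilde x\equiv\tilde c_{i_0}$. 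This shows directly that $\Omega_{\infty,\tilde r}^X=\{\nu_0,[\tilde c_2],\dots,[\tilde c_k]\}$, and the identical computation for any subsequence $\tilde r'$ (the lacunary condition~\eqref{e4.32} still forces $n(k)=j_k$) gives tangency without invoking Theorem~\ref{t4.3} or Corollary~\ref{c4.4}. Your route buys more: it proves that \emph{every} pretangent space of $X$, for \emph{every} scaling sequence, has at most $k$ points, not just the one containing the $\tilde c_i$.

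One minor point: in the $k=1$ case your phrase ``trivially tangent'' hides the verification that the singleton persists under every subsequence $\tilde r'$; the exponent argument you sketch for $\tilde r$ itself carries over unchanged to each $\tilde r'$, so this is easily filled in (or you may simply quote Theorem~\ref{th5.3}\,(i)$\Rightarrow$(ii), since $Sp(X)$ is strongly porous for your example).
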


The last theorem does not have direct generalization to the case of infinite $(Y, \delta)$ even if $(Y, \delta)$ is complete, separable and strongly rigid. (See Example~\ref{ex5.10} in the last section of the paper.)

\section{Finite tangent spaces and strong porosity at a point}

Theorem~\ref{t4.3} gives, in particular, a condition guaranteeing the finiteness of all pretangent spaces. The goal of present section is to obtain the \emph{existence conditions} for finite tangent spaces.

Let $(X, d)$ be an unbounded metric space and let $p\in X.$
The finiteness of $\Omega_{\infty, \tilde r}^{X}$ is closely connected with a porosity of the set $$Sp(X):=\{d(x, p): x\in X\}$$ at infinity.
\begin{definition}\label{d5.1}
Let $E\subseteq\mathbb R^{+}$. The \emph{porosity} of $E$ at infinity is the quantity
\begin{equation}\label{e5.1}
p^{+}(E, \infty):=\limsup_{h\to\infty}\frac{l(\infty, h, E)}{h},
\end{equation}
where $l(\infty, h, E)$ is the length of the longest interval in the set $[0, h]\setminus E$. The set $E$ is \emph{strongly porous at infinity} if $p^{+}(E, \infty)=1$ and, respectively, $E$ is \emph{nonporous at infinity} if $p^{+}(E, \infty)=0$.
\end{definition}

The standard definition of porosity at a finite point can be found in \cite{Th}. See \cite{ADK, BD1, BD2, BD3} for some applications of porosity to studies of pretangent spaces  at finite points of metric spaces.

\begin{lemma}\label{l5.2}
Let $E\subseteq\mathbb R^{+}$ and let $p_1 \in (0,1)$. If the double inequality
\begin{equation}\label{e5.2}
p^{+}(E, \infty)<p_1<1
\end{equation}
holds, then for every infinite, strictly increasing sequence of real numbers $r_n$ with $\mathop{\lim}\limits_{n\to\infty}r_n=\infty$ there is a subsequence $(r_{n_k})_{k\in\mathbb N}$ such that for every $k\in\mathbb N$ there are points $x_{1}^{(k)}, \ldots, x_{k}^{(k)}\in E$ which satisfy the inequalities
\begin{equation}\label{e5.3}
\begin{aligned}
r_{n_k} < \frac{r_{n_k}}{1-p_1} & \le x_{1}^{(k)}\le \frac{r_{n_k}}{(1-p_1)^{2}}\\
<\frac{r_{n_k}}{(1-p_1)^{3}} & \le x_{2}^{(k)}\le\frac{r_{n_k}}{(1-p_1)^{4}}\\
\ldots & \ldots \ldots \ldots \ldots\\
<\frac{r_{n_k}}{(1-p_1)^{2k-1}} & \le x_{k}^{(k)} \le \frac{r_{n_k}}{(1-p_1)^{2k}}<r_{n_{k+1}}.
\end{aligned}
\end{equation}
\end{lemma}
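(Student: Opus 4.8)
The plan is to convert the porosity hypothesis into an eventual numerical bound, then select the subsequence inductively, and finally produce every point $x_j^{(k)}$ from a single geometric observation about gaps. First I would unwind the definition \eqref{e5.1}: since $\limsup_{h\to\infty}\frac{l(\infty,h,E)}{h}=p^{+}(E,\infty)<p_1$, there is a threshold $H>0$ with
\begin{equation*}
\frac{l(\infty,h,E)}{h}<p_1\quad\text{for all }h\ge H .
\end{equation*}

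The heart of the argument is the following observation. Fix any interval $[a,b]$ whose endpoints satisfy $\frac{a}{b}=1-p_1$ and $b\ge H$. If $[a,b]\cap E=\varnothing$, then the open interval $(a,b)$ lies in $[0,b]\setminus E$, so $l(\infty,b,E)\ge b-a=b\bigl(1-(1-p_1)\bigr)=p_1 b$, whence $\frac{l(\infty,b,E)}{b}\ge p_1$, contradicting the threshold inequality. Hence every such interval must meet $E$. The point is that each interval $I_j^{(k)}:=\bigl[\frac{r_{n_k}}{(1-p_1)^{2j-1}},\frac{r_{n_k}}{(1-p_1)^{2j}}\bigr]$ has exactly this form, since the ratio of its endpoints is $1-p_1$.

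Next I would choose the subsequence $(r_{n_k})$ inductively so that, for every $k$, the smallest relevant right endpoint $\frac{r_{n_k}}{(1-p_1)^2}$ exceeds $H$ and the closing constraint $\frac{r_{n_k}}{(1-p_1)^{2k}}<r_{n_{k+1}}$ holds. Concretely, pick $n_1$ with $r_{n_1}\ge H$; given $n_1<\dots<n_k$, use $r_n\to\infty$ to choose $n_{k+1}>n_k$ with $r_{n_{k+1}}>\frac{r_{n_k}}{(1-p_1)^{2k}}$, which (because $(1-p_1)^{2k}<1$) automatically keeps $r_{n_{k+1}}\ge H$. Then for each $k$ and each $j\in\{1,\dots,k\}$ the right endpoint of $I_j^{(k)}$ satisfies $\frac{r_{n_k}}{(1-p_1)^{2j}}\ge\frac{r_{n_k}}{(1-p_1)^{2}}\ge\frac{H}{(1-p_1)^{2}}>H$, so the observation applies and furnishes a point $x_j^{(k)}\in I_j^{(k)}\cap E$.

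Finally I would verify the displayed chain \eqref{e5.3}: the strict inequalities $r_{n_k}<\frac{r_{n_k}}{1-p_1}$ and $\frac{r_{n_k}}{(1-p_1)^{2j}}<\frac{r_{n_k}}{(1-p_1)^{2j+1}}$ are immediate from $0<1-p_1<1$, the two-sided bound $\frac{r_{n_k}}{(1-p_1)^{2j-1}}\le x_j^{(k)}\le\frac{r_{n_k}}{(1-p_1)^{2j}}$ is the defining property of $x_j^{(k)}$, and the closing inequality $\frac{r_{n_k}}{(1-p_1)^{2k}}<r_{n_{k+1}}$ is exactly the inductive choice. I expect no serious obstacle; the only points requiring care are coordinating the two demands on the subsequence (lying above $H$ and meeting the nesting bound) within one induction, and making sure the gap estimate is applied at the right endpoint $b$ rather than at the left endpoint.
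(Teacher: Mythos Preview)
Your proposal is correct and follows essentially the same approach as the paper's proof: both translate the porosity bound into an eventual inequality $l(\infty,h,E)<p_1 h$, observe that any interval $[a,b]$ with $a/b=1-p_1$ and $b$ large enough must meet $E$ (via the identity $b-a=p_1 b$), and construct the subsequence inductively by choosing $n_{k+1}$ so that $r_{n_{k+1}}>(1-p_1)^{-2k}r_{n_k}$. Your write-up is in fact slightly more careful than the paper's in verifying that the threshold condition propagates to all $r_{n_k}$ and in checking the right endpoint is the correct place to apply the gap estimate.
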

\begin{proof}
Suppose that \eqref{e5.2} holds. Let $n_1$ be the first natural number such that $l(\infty, h, E)< p_{1}h$ for all $h\in(r_{n_1}, \infty).$ If $r_{n_1}, \ldots, r_{n_k}$ are defined, then
write $n_{k+1}$ for the first $m$ with
$$
r_m>(1-p_1)^{-2k} r_{n_k}.
$$
It is easy to show that the equality
$$
\frac{r_{n_k}}{(1-p_1)^{m}}-\frac{r_{n_k}}{(1-p_1)^{m-1}} = p_1 \cdot \frac{r_{n_k}}{(1-p_1)^{m}}
$$
holds for all $m\in\mathbb N$. Using the last equality, Definition~\ref{d5.1} and inequality \eqref{e5.2} we obtain
$$
E\cap\left[\frac{r_{n_k}}{(1-p_1)^{m}}, \frac{r_{n_k}}{(1-p_1)^{m+1}}\right]\ne\varnothing
$$
for all $m\in\mathbb N.$ Hence, there are points $x_{1}^{(k)}, \ldots, x_{k}^{(k)}\in E$ which satisfy \eqref{e5.3}.
\end{proof}

\begin{theorem}\label{th5.3}
Let $(X, d)$ be an unbounded metric space, $p\in X.$ The following statements are equivalent:
\begin{enumerate}
\item[\rm(i)] The set $Sp(X)$ is strongly porous at infinity;
\item[\rm(ii)] There is a single-point, tangent space $\Omega_{\infty, \tilde r}^{X}$;
\item[\rm(iii)] There is a finite tangent space $\Omega_{\infty, \tilde r}^{X}$;
\item[\rm(iv)]  There is a compact tangent space $\Omega_{\infty, \tilde r}^{X};$
\item[\rm(v)]  There is a bounded, separable tangent space $\Omega_{\infty, \tilde r}^{X}$.
\end{enumerate}
\end{theorem}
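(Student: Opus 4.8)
The plan is to prove the five statements equivalent through the cycle $(\mathrm{i})\Rightarrow(\mathrm{ii})\Rightarrow(\mathrm{iii})\Rightarrow(\mathrm{iv})\Rightarrow(\mathrm{v})\Rightarrow(\mathrm{i})$. The three middle implications are immediate from inclusions between classes of metric spaces: a one-point space is finite, a finite metric space is compact, and a compact metric space is bounded and separable. Thus all the content lies in $(\mathrm{i})\Rightarrow(\mathrm{ii})$ and in $(\mathrm{v})\Rightarrow(\mathrm{i})$.

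For $(\mathrm{i})\Rightarrow(\mathrm{ii})$ I would exploit the gaps of $E=Sp(X)$ produced by strong porosity. Since $p^{+}(E,\infty)=1$, there is a sequence $h_k\to\infty$ with $l(\infty,h_k,E)/h_k\to 1$, yielding gaps $(a_k,b_k)$ with $(a_k,b_k)\cap E=\varnothing$, $b_k\to\infty$ and $a_k/b_k\to 0$. I would then set $r_k:=\sqrt{(a_k+1)b_k}$, which is a scaling sequence satisfying $a_k/r_k\to 0$ and $r_k/b_k\to 0$. The key point is that for each fixed $c>0$ the interval $(cr_k/2,\,2cr_k)$ lies inside the gap $(a_k,b_k)$ for all large $k$; hence no $\tilde x\in Seq(X,\tilde r)$ can have $\lim_k d(x_k,p)/r_k=c$ with $c>0$, because $d(x_k,p)\in E$. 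Consequently $Seq(X,\tilde r)=\tilde X_{\infty,\tilde r}^{0}$, so there is a unique pretangent space and it is a single point $\nu_0$. The same argument applies verbatim to every subsequence $\tilde r'$ (the property $a/b\to 0$ is inherited), so each $\Omega_{\infty,\tilde r'}^{X}$ is also a single point and $em'$ is trivially onto; thus the space is tangent.

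For $(\mathrm{v})\Rightarrow(\mathrm{i})$ I would argue by contraposition: assuming $p^{+}(E,\infty)=p_0<1$, I would show that no bounded separable tangent space exists. Fix $p_1\in(p_0,1)$ and $q=1-p_1$, and suppose $\Omega=\Omega_{\infty,\tilde r}^{X}$ is bounded (distance from $\nu_0$ at most $D$), separable and tangent; passing to an increasing subsequence (allowed by tangency) I may take $\tilde r$ strictly increasing. Applying Lemma~\ref{l5.2} I obtain a subsequence and, at each surviving scale $s$, points of $E$ in the geometric bands $[s\,q^{-(2j-1)},\,s\,q^{-2j}]$; I fix a single level $j_0$ with $q^{-(2j_0-1)}>D$ and realise the corresponding distances by points $\xi^{(k)}\in X$. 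Using a countable dense family $\{\tilde u^{(m)}\}$ in $\Omega$ together with a Cantor diagonal extraction, I would build $\tilde w=(\xi^{(k)})$ along a further subsequence $\tilde s'$ so that $\tilde{\tilde d}_{\tilde s'}(\tilde w)=c>D$ while $\tilde w$ is mutually stable with every $\tilde u^{(m)}$.

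The decisive step, and the main obstacle, is to upgrade mutual stability with the dense family to mutual stability with the entire tangent set. For an arbitrary $\tilde v$ in the transported maximal self-stable set, choosing $\tilde u^{(m_l)}$ with $\tilde d(\tilde v,\tilde u^{(m_l)})\to 0$ and applying the triangle inequality gives
$$
\limsup_{i}\frac{d(w_i,v_i)}{s'_i}-\liminf_{i}\frac{d(w_i,v_i)}{s'_i}\le 2\,\tilde d(\tilde v,\tilde u^{(m_l)})\xrightarrow[l\to\infty]{}0,
$$
so the limit exists and $\tilde w$ is mutually stable with $\tilde v$. By Lemma~\ref{lem1.7} this forces $\tilde w$ into the maximal self-stable set, whence $\pi(\tilde w)\in\Omega_{\infty,\tilde s'}^{X}$ satisfies $\rho(\nu_0,\pi(\tilde w))=c>D$. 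Since tangency makes $\Omega_{\infty,\tilde s'}^{X}$ isometric to $\Omega$, and hence bounded by $D$, this is the desired contradiction. The heart of the difficulty is exactly this interplay: separability lets a single far-away point be made compatible with a dense set, and tangency then refuses to leave it outside the space, contradicting boundedness.
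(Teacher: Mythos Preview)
Your proof is correct and follows the same overall cycle as the paper. The implication $(\mathrm{i})\Rightarrow(\mathrm{ii})$ is essentially identical to the paper's: choose the scaling sequence as a geometric midpoint of the gaps and observe that no $\tilde x\in Seq(X,\tilde r)$ can have positive $\tilde{\tilde d}_{\tilde r}(\tilde x)$; your ``$+1$'' is just a device to avoid the case $a_k=0$, which the paper handles by passing to a subsequence so that $c_n>0$.

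For $(\mathrm{v})\Rightarrow(\mathrm{i})$ your argument differs from the paper's in two respects, both in your favour. First, the paper builds an \emph{infinite} family $\{\tilde x_j\}$ at all geometric levels $j$ and appeals to Lemma~\ref{l4.2} to make this family self-stable along a subsequence $\tilde r''$, concluding that the resulting $\Omega_{\infty,\tilde r''}^{X}$ is unbounded; you instead fix a \emph{single} level $j_0$ beyond the bound $D$ and produce one sequence $\tilde w$, which is more economical. Second --- and more importantly --- you make explicit the role of separability. The paper's proof asserts that the unbounded $\Omega_{\infty,\tilde r''}^{X}$ contradicts~$(\mathrm{v})$, but tangency only forces the pretangent space \emph{containing the transported} $\tilde X_{\infty,\tilde r}$ to be isometric to $\Omega$, not an arbitrary pretangent space with respect to $\tilde r''$. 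Your density argument (diagonalising against a countable dense family $\{\tilde u^{(m)}\}$ and then using the triangle inequality to upgrade mutual stability from the dense set to all of $\tilde X_{\infty,\tilde s'}$, followed by Lemma~\ref{lem1.7}) is exactly what is needed to place $\tilde w$ inside the transported maximal self-stable set, where tangency then yields the contradiction with boundedness. This step is left implicit in the paper; your treatment is the cleaner one.
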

\begin{proof}
(i)$\Rightarrow$(ii) Suppose the equality
\begin{equation*}
p^{+}(Sp (X), \infty)=1
\end{equation*}
holds. Let $\tilde h=(h_n)_{n\in\mathbb N}$ be a strictly increasing sequence of positive numbers such that
$$
\lim_{n\to\infty} h_n = \infty\quad\mbox{and}\quad \lim_{n\to\infty}\frac{l(\infty, h_n, Sp(X))}{h_n}=1.
$$
Let us consider a sequence of intervals $(c_n, d_n)\subseteq[0, h_n]\setminus Sp (X)$ for which
\begin{equation}\label{e5.4}
\lim_{n\to\infty}\frac{|d_n - c_n|}{h_n}=1.
\end{equation}
Passing, if it is necessary, to a subsequence we suppose that
\begin{equation}\label{e5.5}
0<c_n<d_n\leq h_n
\end{equation}
holds for every $n\in\mathbb N$. A pretangent $\Omega_{\infty,\tilde r}^X$ is single-point if and only if $\tilde{X}_{\infty,\tilde r} = \tilde{X}_{\infty,\tilde r}^0$ holds for the corresponding $\tilde{X}_{\infty,\tilde r}$. Hence, it suffices to prove that
\begin{equation}\label{e5.7}
\lim_{n\to\infty}\frac{d(x_n, p)}{r_n}=0
\end{equation}
holds for every $x \in \tilde{X}_{\infty,\tilde r}$. Write
\begin{equation}\label{e5.6}
r_n:=\sqrt{d_{n}c_{n}}
\end{equation}
for every $n \in \mathbb{N}$ and define $\tilde{r} := (r_n)_{n \in \mathbb{N}}$.

Let us prove equality \eqref{e5.7}. It is evident that \eqref{e5.4} and \eqref{e5.5} imply the limit relations
\begin{equation}\label{e5.8}
\lim_{n\to\infty}\frac{c_n}{h_n}=0 \quad\mbox{and}\quad \lim_{n\to\infty}\frac{d_n}{h_n}=1.
\end{equation}
Consequently, we obtain
\begin{equation}\label{e5.9}
\lim_{n\to\infty}\frac{c_n}{d_n}=0 \quad\mbox{and} \quad \lim_{n\to\infty}\frac{d_n}{c_n}=\infty.
\end{equation}
Since $(c_n, d_n)\subseteq[0, h_n]\setminus Sp(X)$, we have either $d(x_n, p)\le c_n$ or $d(x_n p)\ge d_n$ for all $n\in\mathbb N.$ Thus, either
\begin{equation}\label{e5.10}
\frac{d(x_n, p)}{r_n}\le\sqrt{\frac{c_n}{d_n}}
\end{equation} or
\begin{equation}\label{e5.11}
\frac{d(x_n, p)}{r_n}\ge\sqrt{\frac{d_n}{c_n}}
\end{equation}
holds
for every $n\in\mathbb N$. The second relation in \eqref{e5.9} implies that \eqref{e5.11} cannot be valid for sufficient large $n$ because $\tilde{\tilde d}_{\tilde{r}} (\tilde{x})$ is finite. Now, \eqref{e5.7} follows from~\eqref{e5.10}.

It is proved that if $\tilde r$ is defined by \eqref{e5.6}, then there is a unique pretangent space $\Omega_{\infty, \tilde r}^{X}$ and this space is single-point. Note also that $\Omega_{\infty, \tilde r}^{X}$ is tangent. To prove it we can consider the subsequences $\tilde x'=(x_{n_k})_{k\in\mathbb N},$ $\tilde z'=(z_{n_k})_{k\in\mathbb N}$ and $\tilde r'=(r_{n_k})_{k\in\mathbb N}$ of $\tilde x$, $\tilde z$ and $\tilde r$, and repeat the proof of equality \eqref{e5.7} substituting $d_{n_k}, c_{n_k}, h_{n_k}$ and $r_{n_k}$ instead of $d_n, c_n, h_n$ and $r_n,$ respectively. The implication (i)$\Rightarrow$(ii) follows.

(ii)$\Rightarrow$(iii), (iii)$\Rightarrow$(iv), (iv)$\Rightarrow$(v) The implications are evident.

(v)$\Rightarrow$(i) Suppose statement (v) holds but there is $p_{1}\in (0, 1)$ such that $p^{+}(Sp(X), \infty) < p_1$. Let $\tilde r=(r_n)_{n\in\mathbb N}$ be a scaling sequence and let $\Omega_{\infty, \tilde r}^{X}$ be bounded, separable and tangent. Applying Lemma~\ref{l5.2} with $E = Sp(X)$ we can find a subsequence $\tilde r' = (r_{n_k})_{k\in\mathbb N}$ of the sequence $\tilde r$ such that for every $k\in\mathbb N$ there are points $x_{1}^{(k)}, \ldots, x_{k}^{(k)}\in X$ for which
\begin{equation}\label{e5.12}
\begin{aligned}
\frac{1}{1-p_1} & \le\frac{d(x_{1}^{(k)}, p)}{r_{n_{k}}}\le\frac{1}{(1-p_1)^{2}},\\
\frac{1}{(1-p_1)^{3}} & \le\frac{d(x_{2}^{(k)}, p)}{r_{n_{k}}}\le\frac{1}{(1-p_1)^{4}},\\
\ldots & \ldots\ldots\ldots \ldots \ldots \ldots \ldots\\
\frac{1}{(1-p_1)^{2k-1}} & \le\frac{d(x_{k}^{(k)}, p)}{r_{n_{k}}}\le\frac{1}{(1-p_1)^{2k}}.
\end{aligned}
\end{equation}
Let $\tilde z=(z_k)_{k\in\mathbb N}\in\tilde{\tilde X}_{\infty, \tilde q}$ and $\tilde q=(q_k)_{k\in\mathbb N}.$
Write $\tilde x_{j}$ for the $j$-th column of the following infinite matrix

\begin{equation*}
\begin{pmatrix}
x_{1}^{(1)}&z_1&z_1&z_1&z_1& \ldots\\
x_{1}^{(2)}&x_{2}^{(2)}&z_2&z_2&z_2& \ldots\\
x_{1}^{(3)}&x_{2}^{(3)}&x_{3}^{(3)}&z_3&z_3& \ldots\\
x_{1}^{(4)}&x_{2}^{(4)}&x_{3}^{(4)}&x_{4}^{(4)}&z_4& \ldots\\
\ldots & \ldots & \ldots &\ldots & \ldots & \ldots
\end{pmatrix}.
\end{equation*}
It follows from \eqref{e5.12} that the inequalities
\begin{equation}\label{e5.13}
\frac{1}{(1-p_1)^{2j-1}}\le\liminf_{k\to\infty}\frac{d(x_{j}^{(k)}, p)}{r_{n_k}}\le\limsup_{k\to\infty}\frac{d(x_{j}^{(k)}, p)}{r_{n_k}}\le\frac{1}{(1-p_1)^{2j}}
\end{equation}
holds for all $j\in\mathbb N$. Let $\tilde X_{\infty, \tilde r}$ be the maximal self-stable family with the metric identification $\Omega_{\infty, \tilde r}^{X}$ and let $\tilde X'_{\infty, \tilde r}=\{(x_{n_k})_{k\in\mathbb N}: (x_n)_{n\in\mathbb N}\in\tilde X_{\infty, \tilde r}\}.$ The family $\mathfrak B:=\{\tilde x_{1}, \tilde x_{2}, \ldots\}$ satisfies the conditions of Lemma~\ref{l4.2}. Hence there is a subsequence $\tilde r''$ of $\tilde r'$ such that $\tilde X_{\infty, \tilde r''} \supseteq \mathfrak B'$. The first inequality in \eqref{e5.12} implies that $\Omega_{\infty, r''}^{X}$ is unbounded, contrary to (v).
\end{proof}

\section{Kuratowski limits of subsets of $\mathbb R$ and their applications to pretangent spaces}

Let $(Y, \delta)$ be a metric space. For any sequence $(A_n)_{n\in\mathbb N}$ of nonempty sets $A_n\subseteq Y$, the \emph{Kuratowski limit inferior} of $(A_n)_{n\in\mathbb N}$ is the subset $\mathop{Li}\limits_{n\to\infty} A_n$ of $Y$ defined by the rule:
\begin{equation}\label{e5.14}
\left(y\in\mathop{Li}\limits_{n\to\infty} A_n\right) \Leftrightarrow \left(\forall \varepsilon>0 \  \exists n_{0} \in \mathbb N\ \forall n \ge n_0\colon B(y, \varepsilon)\cap A_n \ne \varnothing \right),
\end{equation}
where $B(y, \varepsilon)$ is the open ball of radius $\varepsilon > 0$ centered at the point $y \in Y$,
$$
B(y, \varepsilon)=\{x\in Y: \delta(x, y) < \varepsilon\}.
$$
Similarly, the \emph{Kuratowski limit superior} of $(A_n)_{n\in\mathbb N}$ can be defined as the subset $\mathop{Ls}\limits_{n\to\infty}A_{n}$ of $Y$ for which

\begin{equation}\label{e5.15}
\left(y\in \mathop{Ls}\limits_{n\to\infty}A_{n}\right)\Leftrightarrow\left(\forall\varepsilon>0\,\, \forall n\in\mathbb N\,\, \exists n_{0}\ge n: B(y, \varepsilon)\cap A_{n_0}\ne\varnothing\right).
\end{equation}

\begin{remark}
The Kuratowski limit inferior and limit superior are basic concepts of set-valued analysis in metric spaces and have numerous applications (see, for example, \cite{AF}).
\end{remark}

We denote $tA := \{tx: x\in A\}$ for any nonempty set $A\subseteq\mathbb R$ and $t\in\mathbb R,$
and, $\nu_0 := \tilde X_{\infty, \tilde r}^{0} \in \Omega_{\infty, \tilde r}^{X}$ for any pretangent space $\Omega_{\infty, \tilde r}^{X}$ of an unbounded metric space $(X, d)$. Moreover, for every scaling sequence $\tilde{r}$, we denote by $\mathbf{\Omega_{\infty, \tilde r}^{X}}$ the set of all pretangent at infinity spaces to $(X,d)$ with respect to $\tilde{r}$. Write
\begin{equation}\label{e5.16}
Sp\left(\Omega_{\infty, \tilde r}^{X}\right) := \{\rho(\nu_0, \nu)\colon \nu \in \Omega_{\infty, \tilde r}^{X}\} \text{ and } Sp(X) := \{d(p,x)\colon x \in X\}.
\end{equation}

\begin{proposition}\label{p5.4}
Let $(X, d)$ be an unbounded metric space, $p\in X$, $\tilde r=(r_n)_{n\in\mathbb N}$ be a scaling sequence and let $\tilde{\mathbf{R}}$ be the set of all infinite subsequences of $\tilde r$. Then the equalities
\begin{equation}\label{e5.15*}
\bigcup_{\Omega_{\infty, \tilde r}^{X} \in \mathbf{\Omega_{\infty, \tilde r}^{X}}} Sp \left(\Omega_{\infty, \tilde r}^{X}\right) = \mathop{Li} \limits_{n\to\infty} \left(\frac{1}{r_n} Sp(X)\right),
\end{equation}
\begin{equation}\label{e5.16*}
\bigcup_{\Omega_{\infty, \tilde r'}^{X}\in\mathbf\Omega_{\infty, \tilde r'}^{X}, \,  \tilde r'\in\tilde{\mathbf{R}}}Sp(\Omega_{\infty, \tilde r'}^{X})=\mathop{Ls}\limits_{n\to\infty}\left(\frac{1}{r_n}(Sp(X))\right)
\end{equation}
hold.
\end{proposition}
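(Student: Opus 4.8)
The plan is to establish \eqref{e5.15*} first and then to derive \eqref{e5.16*} from it by means of a general property of Kuratowski limits. For \eqref{e5.15*} I would begin by recasting its left-hand side in purely sequential terms. By Remark~\ref{r4.6} each element of $Sp(\Omega_{\infty, \tilde r}^{X})$ equals $\rho(\nu_0, \nu) = \rho^{0}(\nu) = \tilde{\tilde d}_{\tilde r}(\tilde x)$ for a representative $\tilde x \in \nu$, and every such $\tilde x$ lies in $Seq(X, \tilde r)$. Conversely, given any $\tilde x \in Seq(X, \tilde r)$, statement~(iii) of Proposition~\ref{pr2.2} makes $\{\tilde x\} \cup \tilde X_{\infty, \tilde r}^{0}$ self-stable, and Zorn's lemma extends it to a maximal self-stable set whose associated pretangent space contains a point at distance $\tilde{\tilde d}_{\tilde r}(\tilde x)$ from $\nu_0$. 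Hence the left-hand side of \eqref{e5.15*} coincides with the set $\{\tilde{\tilde d}_{\tilde r}(\tilde x) \colon \tilde x \in Seq(X, \tilde r)\}$.

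Next I would unwind \eqref{e5.14} with $A_n := \frac{1}{r_n} Sp(X)$: a number $y \ge 0$ belongs to $\mathop{Li}\limits_{n\to\infty} A_n$ exactly when some sequence $(x_n)_{n\in\mathbb N} \subset X$ satisfies $\frac{d(x_n, p)}{r_n} \to y$, the nontrivial implication being a routine diagonal selection (for $\varepsilon = 1/k$ record the threshold index $n_k$ and pick $x_n$ witnessing proximity on each block $[n_k, n_{k+1})$). The inclusion of $\{\tilde{\tilde d}_{\tilde r}(\tilde x) \colon \tilde x \in Seq(X, \tilde r)\}$ into $\mathop{Li}\limits_{n\to\infty} A_n$ is then immediate. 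The reverse inclusion is the step I expect to demand the most care, because membership in $Seq(X, \tilde r)$ carries the extra requirement $d(x_n, p) \to \infty$, which an arbitrary approximating sequence may well fail. Lemma~\ref{l3.2} resolves exactly this difficulty: it replaces a given $(x_n)$ by some $\tilde y = (y_n) \in \tilde X_{\infty}$ with $\frac{d(x_n, y_n)}{r_n} \to 0$, so that $\frac{d(y_n, p)}{r_n} \to y$ by the triangle inequality; thus $\tilde y \in Seq(X, \tilde r)$ and $\tilde{\tilde d}_{\tilde r}(\tilde y) = y$, which proves \eqref{e5.15*}.

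To obtain \eqref{e5.16*} I would apply \eqref{e5.15*} to every subsequence $\tilde r' = (r_{n_k})_{k\in\mathbb N} \in \tilde{\mathbf{R}}$, rewriting the left-hand side of \eqref{e5.16*} as $\bigcup_{(n_k)} \mathop{Li}\limits_{k\to\infty}\bigl(\frac{1}{r_{n_k}} Sp(X)\bigr)$, the union running over all strictly increasing index sequences. It then suffices to verify the general identity
\begin{equation*}
\mathop{Ls}\limits_{n\to\infty} A_n = \bigcup_{(n_k)} \mathop{Li}\limits_{k\to\infty} A_{n_k}.
\end{equation*}
Both inclusions are read off directly from \eqref{e5.15}: if $y \in \mathop{Ls}\limits_{n\to\infty} A_n$, then choosing inductively $n_k > n_{k-1}$ with $B(y, 1/k) \cap A_{n_k} \neq \varnothing$ yields a subsequence along which $y \in \mathop{Li}$; and any $y$ lying in some $\mathop{Li}\limits_{k\to\infty} A_{n_k}$ is a cluster value of $(A_n)_{n\in\mathbb N}$, hence lies in $\mathop{Ls}\limits_{n\to\infty} A_n$. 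Combined with the previous reduction, this yields \eqref{e5.16*}.
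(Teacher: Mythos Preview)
Your argument is correct and follows the same overall architecture as the paper's proof: establish \eqref{e5.15*} by showing both inclusions between $\{\tilde{\tilde d}_{\tilde r}(\tilde x)\colon \tilde x\in Seq(X,\tilde r)\}$ and $\mathop{Li}\limits_{n\to\infty}\frac{1}{r_n}Sp(X)$, then deduce \eqref{e5.16*} from the general subsequence characterisation of the Kuratowski upper limit. The one substantive difference lies in the reverse inclusion for \eqref{e5.15*}. The paper handles the requirement $d(x_n,p)\to\infty$ by a case split: $0\in Sp(\Omega_{\infty,\tilde r}^{X})$ is trivial, while for $t>0$ any approximating sequence $(x_n)$ with $d(x_n,p)/r_n\to t$ automatically has $d(x_n,p)\to\infty$ because $r_n\to\infty$, so no repair is needed. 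You instead invoke Lemma~\ref{l3.2} to replace an arbitrary approximating sequence by one in $\tilde X_\infty$, which treats all $y\ge 0$ uniformly at the price of an extra lemma. Both approaches are valid; the paper's is slightly more elementary, yours slightly more uniform.
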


\begin{proof}
Let us prove the inclusion
\begin{equation}\label{e5.17}
\bigcup_{\Omega_{\infty, \tilde r}^{X} \in \mathbf{\Omega_{\infty, \tilde r}^{X}}} Sp \left(\Omega_{\infty, \tilde r}^{X}\right) \subseteq \mathop{Li} \limits_{n\to\infty} \left(\frac{1}{r_n} Sp(X)\right).
\end{equation}
Let $\Omega_{\infty, \tilde r}^{X}\in\mathbf{\Omega_{\infty, \tilde r}^{X}}$ and $\nu \in \Omega_{\infty, \tilde r}^{X}$ be arbitrary. Let $\tilde X_{\infty, \tilde r}$ be a maximal self-stable family with the metric identification $\Omega_{\infty, \tilde r}^{X}$, and let $\tilde x=(x_n)_{n \in \mathbb N} \in \tilde X_{\infty, \tilde r}$, $\tilde z=(z_n)_{n\in\mathbb N} \in \tilde X_{\infty, \tilde r}^0$ such that
$$
\pi(\tilde x) = \nu \quad \mbox{and} \quad \pi(\tilde z)= \nu_0.
$$
Then, by the definition of pretangent spaces, we have
\begin{equation*}
\lim_{n\to\infty} \frac{d(x_n, z_n)}{r_n} = \rho(\nu_0, \nu).
\end{equation*}
Consequently, for every $\varepsilon>0$ the inequality
$$
\left|\frac{1}{r_n}d(x_n, p)-\rho(\nu_0, \nu)\right| < \varepsilon
$$
holds for all sufficiently large $n$. Since $\Omega_{\infty, \tilde r}^{X}$ is an arbitrary element of $\mathbf{\Omega_{\infty, \tilde r}^{X}}$ and $\nu$ is an arbitrary point of $\Omega_{\infty, \tilde r}^{X}$ and $\frac{1}{r_n} d(x_n, p) \in \frac{1}{r_n} Sp(X)$, inclusion \eqref{e5.17} follows.

To obtain the converse inclusion, we consider an arbitrary
\begin{equation}\label{e5.18}
t\in\mathop{Li} \limits_{n\to\infty} \left(\frac{1}{r_n} Sp(X)\right).
\end{equation}
It is evident that $0 \in Sp\left(\Omega_{\infty, \tilde r}^{X}\right)$ holds for every $\Omega_{\infty, \tilde r}^{X}$. Suppose $t>0$ and write
\begin{equation*}
\operatorname{dist}\left(t, \frac{1}{r_n}Sp(X)\right) := \inf\left\{|t-s|\colon s \in \frac{1}{r_n}Sp(X)\right\}.
\end{equation*}
Using \eqref{e5.14}, we see that \eqref{e5.18} holds if and only if
\begin{equation}\label{e5.19}
\lim_{n\to\infty} \textrm{dist}\left(t, \frac{1}{r_n} Sp(X)\right)=0.
\end{equation}
Consequently, there is a sequence $(\tau_n)_{n\in\mathbb N}$ such that
\begin{equation}\label{e5.20}
\lim_{n\to\infty}\left|\tau_n - t\right|=0
\end{equation}
and $\tau_n\in\frac{1}{r_n}Sp(X)$ for every $n\in\mathbb N$. Using the definition of $\frac{1}{r_n}Sp(X),$ we may rewrite the last statement as: ``There is a sequence $(x_n)_{n\in\mathbb N}\subset X$ such that
$$
\lim_{n\to\infty}\left|\frac{d(x_n, p)}{r_n}-t\right|=0
$$
holds''. Thus, we have
\begin{equation}\label{e5.21}
\lim_{n\to\infty}\frac{d(x_n, p)}{r_n}=t.
\end{equation}
The inequality $t>0$ implies that $(x_n)_{n\in\mathbb N}\in\tilde X_{\infty, \tilde r}$. Let $\tilde X_{\infty, \tilde r}$ be a maximal self-stable family for which $(x_n)_{n\in\mathbb N} \in \tilde X_{\infty, \tilde r}$ and let $\Omega_{\infty, \tilde r}^{X}$ be the metric identification of $\tilde X_{\infty, \tilde r}$. Limit relation \eqref{e5.21} implies $t \in Sp\left(\Omega_{\infty, \tilde r}^{X} \right)$. Since $t$ is an arbitrary positive number from $\mathop{Li} \limits_{n\to\infty} \left(\frac{1}{r_n} Sp(X)\right)$, we obtain
\begin{equation*}
\bigcup_{\Omega_{\infty, \tilde r}^{X}\in\mathbf{\Omega_{\infty, \tilde r}^{X}}}Sp\left(\Omega_{\infty, \tilde r}^{X}\right)\supseteq\mathop{Li}\limits_{n\to\infty}\left(\frac{1}{r_n}Sp(X)\right).
\end{equation*}
Equality \eqref{e5.15} follows.

Equality \eqref{e5.16*} follows from \eqref{e5.15} because, for every $t\ge 0,$ we have $t\in\mathop{Ls}\limits_{n\to\infty}\left(\frac{1}{r_n}Sp(X)\right)$ if and only if $t\in\mathop{Li}\limits_{n\to\infty}\left(\frac{1}{r_{n_k}}Sp(X)\right)$ holds for some $(r_{n_k})_{k\in\mathbb N}\in\tilde{\mathbf{R}}.$
\end{proof}

\begin{remark}\label{r5.5}
Let $\rho^0\colon V(G_{X, \tilde{r}}) \to \mathbb{R}^+$ be the function defined in Theorem~\ref{t4.5}. Then using equality \eqref{e2.10} we see that
$$
\rho^0\bigl(V(G_{X, \tilde{r}})\bigr) =\{\tilde{\tilde d}_{\tilde r}(\tilde x): \tilde x\in Seq (X, \tilde r)\}= \bigcup_{\Omega_{\infty, \tilde r}^{X} \in \mathbf{\Omega}_{\infty, \mathbf{\tilde{r}}}^{\mathbf{X}}} Sp\bigl(\Omega_{\infty, \tilde r}^{X}\bigr).
$$
\end{remark}

\begin{corollary}\label{c5.7}
Let $(X, d)$ be an unbounded metric space, $\tilde r$ be a scaling sequence and let $^{1}\Omega_{\infty, \tilde r}^{X}$ be tangent and separable. Then we have
\begin{equation}\label{e5.21*}
\mathop{Li}\limits_{n\to\infty}\left(\frac{1}{r_n}Sp(X)\right)=\mathop{Ls}\limits_{n\to\infty}\left(\frac{1}{r_n}Sp(X)\right)=
Sp\left(^{1}\Omega_{\infty, \tilde r}^{X}\right).
\end{equation}
\end{corollary}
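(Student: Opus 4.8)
The plan is to prove the two displayed equalities by establishing the cyclic chain of inclusions
$$
Sp\left(^{1}\Omega_{\infty, \tilde r}^{X}\right)\subseteq\mathop{Li}\limits_{n\to\infty}\left(\tfrac{1}{r_n}Sp(X)\right)\subseteq\mathop{Ls}\limits_{n\to\infty}\left(\tfrac{1}{r_n}Sp(X)\right)\subseteq Sp\left(^{1}\Omega_{\infty, \tilde r}^{X}\right).
$$
The first inclusion is immediate from equality \eqref{e5.15*} of Proposition~\ref{p5.4}, since $^{1}\Omega_{\infty, \tilde r}^{X}$ is one of the spaces in $\mathbf{\Omega_{\infty, \tilde r}^{X}}$. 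The middle inclusion $\mathop{Li}\subseteq\mathop{Ls}$ is the standard general relation between the Kuratowski lower and upper limits (compare \eqref{e5.14} and \eqref{e5.15}). Hence all the work is in the last inclusion $\mathop{Ls}\subseteq Sp\left(^{1}\Omega_{\infty, \tilde r}^{X}\right)$, and this is where tangency and separability must be used together.

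To prove it I would fix $t\in\mathop{Ls}\limits_{n\to\infty}\left(\tfrac{1}{r_n}Sp(X)\right)$. The point $t=0$ lies in $Sp$ of every pretangent space, so I would assume $t>0$. By the description of the upper limit there is a subsequence $\tilde r'=(r_{n_k})_{k\in\mathbb N}$ of $\tilde r$ and points $w_k\in X$ with $\tfrac{d(w_k,p)}{r_{n_k}}\to t$; writing $\tilde w=(w_k)_{k\in\mathbb N}$, we have $\tilde w\in Seq(X,\tilde r')$ and $\tilde{\tilde d}_{\tilde r'}(\tilde w)=t$. Now separability of $^{1}\Omega_{\infty, \tilde r}^{X}$ enters: I would choose a countable family $\{\tilde\xi^{(j)}\}_{j\in\mathbb N}\subseteq{}^{1}\tilde X_{\infty, \tilde r}$ whose projections are dense in $^{1}\Omega_{\infty, \tilde r}^{X}$. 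Each $\tilde\xi^{(j)}$ has finite $\tilde{\tilde d}_{\tilde r}(\tilde\xi^{(j)})$, so the countable family $\mathfrak B=\{\tilde w\}\cup\{\tilde\xi^{(j)\prime}:j\in\mathbb N\}$ (restrictions taken along $\tilde r'$) satisfies the hypothesis \eqref{e4.14} of Lemma~\ref{l4.2}. Applying Lemma~\ref{l4.2} yields a further subsequence $\tilde r''$ of $\tilde r'$ along which $\mathfrak B''=\{\tilde w''\}\cup\{\tilde\xi^{(j)\prime\prime}\}_{j\in\mathbb N}$ is self-stable, while $\tilde{\tilde d}_{\tilde r''}(\tilde w'')=\tilde{\tilde d}_{\tilde r'}(\tilde w)=t$ is preserved by \eqref{e1.6}.

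The final step combines tangency with a density argument. Since $^{1}\Omega_{\infty, \tilde r}^{X}$ is tangent, the set $\tilde X^{t}:=\{\tilde x'':\tilde x\in{}^{1}\tilde X_{\infty, \tilde r}\}$ is a maximal self-stable subset of $Seq(X,\tilde r'')$ and $em''$ is an isometry of $^{1}\Omega_{\infty, \tilde r}^{X}$ onto its metric identification fixing $\nu_0$; consequently the $Sp$-set of $\tilde X^{t}$ equals $Sp\left(^{1}\Omega_{\infty, \tilde r}^{X}\right)$, and $\{\tilde\xi^{(j)\prime\prime}\}$ is dense in it. I would then show that $\tilde w''$ is mutually stable with \emph{every} $\tilde x''\in\tilde X^{t}$: choosing $\tilde\xi^{(j_m)\prime\prime}$ with $\tilde d_{\tilde r''}(\tilde\xi^{(j_m)\prime\prime},\tilde x'')\to0$ and using the pointwise triangle inequality, the bound
$$
\limsup_{l\to\infty}\tfrac{d(w''_l,x''_l)}{r''_l}-\liminf_{l\to\infty}\tfrac{d(w''_l,x''_l)}{r''_l}\le 2\,\tilde d_{\tilde r''}\!\left(\tilde\xi^{(j_m)\prime\prime},\tilde x''\right)\xrightarrow[m\to\infty]{}0
$$
forces the limit $\tilde d_{\tilde r''}(\tilde w'',\tilde x'')$ to exist (this is the squeeze used in the proof of Lemma~\ref{l3.1}). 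By Lemma~\ref{lem1.7} this gives $\tilde w''\in\tilde X^{t}$, whence $t=\tilde{\tilde d}_{\tilde r''}(\tilde w'')\in Sp(\tilde X^{t})=Sp\left(^{1}\Omega_{\infty, \tilde r}^{X}\right)$, completing the chain. The hard part is precisely this last step: tangency is what makes $\tilde X^{t}$ maximal (so that mutual stability with all of $\tilde X^{t}$ suffices, via Lemma~\ref{lem1.7}), and separability is what makes the family countable so that Lemma~\ref{l4.2} applies; the density-plus-completeness squeeze is needed to upgrade mutual stability with a dense set to mutual stability with every point of $\tilde X^{t}$.
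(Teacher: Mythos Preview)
Your argument is correct and follows essentially the same route as the paper. The paper organizes the sandwich slightly differently---it uses \emph{both} identities of Proposition~\ref{p5.4} to rewrite $\mathop{Li}$ and $\mathop{Ls}$ as the unions $\bigcup_{\Omega\in\mathbf{\Omega}_{\infty,\tilde r}^X}Sp(\Omega)$ and $\bigcup_{\tilde r'\in\tilde{\mathbf R}}\bigcup_{\Omega\in\mathbf{\Omega}_{\infty,\tilde r'}^X}Sp(\Omega)$, and then shows these two unions coincide with $Sp({}^1\Omega_{\infty,\tilde r}^X)$---whereas you work directly with the subsequence description of $\mathop{Ls}$; but the substantive step (separability to make the family countable, Lemma~\ref{l4.2} to pass to a self-stable subsequence, tangency to make $\{\tilde x'':\tilde x\in{}^1\tilde X_{\infty,\tilde r}\}$ maximal, and a density squeeze \`a la Lemma~\ref{l3.1} to absorb $\tilde w''$) is exactly what the paper compresses into the single sentence ``Using Lemma~\ref{l4.2}, \dots\ we can find $\nu\in{}^1\Omega_{\infty,\tilde r}^X$ with $\rho(\nu_0,\nu)=s$.'' Your write-up simply makes that sentence explicit.
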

\begin{proof}
Using Lemma~\ref{l4.2}, for every $\tilde r'\in\tilde{\mathbf{R}}$ and every $$s\in\mathop{\bigcup}\limits_{\Omega_{\infty, \tilde r'}^{X}\in\mathbf{\Omega}_{\infty, \tilde r'}^{X}}\Omega_{\infty, \tilde r'}^{X},$$ we can find $\nu$ belonging to the tangent space $^{1}\Omega_{\infty, \tilde r}^{X}$ such that $\rho(\nu_{0}, \nu)=s.$ Consequently
\begin{equation*}
Sp\left(^{1}\Omega_{\infty, \tilde r}^{X}\right)\supseteq \bigcup_{\Omega_{\infty, \tilde r'}^{X}\in\mathbf\Omega_{\infty, \tilde r'}^{X}, \,  \tilde r'\in\tilde{\mathbf{R}}}Sp(\Omega_{\infty, \tilde r'}^{X})
\end{equation*}
holds. It is evident that
\begin{equation*}
\bigcup_{\Omega_{\infty, \tilde r'}^{X}\in\mathbf\Omega_{\infty, \tilde r'}^{X}, \,  \tilde r'\in\tilde{\mathbf{R}}}Sp(\Omega_{\infty, \tilde r'}^{X})\supseteq\bigcup_{\Omega_{\infty, \tilde r}^{X}\in\mathbf{\Omega}_{\infty, \tilde r}^{X}} Sp(\Omega_{\infty, \tilde r}^{X})\supseteq Sp(^{1}\Omega_{\infty, \tilde r}^{X}).
\end{equation*}
Hence we have the equalities
\begin{equation*}
\bigcup_{\Omega_{\infty, \tilde r'}^{X} \in \mathbf\Omega_{\infty, \tilde r'}^{X}, \,  \tilde r'\in\tilde{\mathbf{R}}} Sp(\Omega_{\infty, \tilde r'}^{X}) = \bigcup_{\Omega_{\infty, \tilde r}^{X}\in\mathbf{\Omega}_{\infty, \tilde r}^{X}} Sp\left(\Omega_{\infty, \tilde r}^{X}\right) = Sp \left( {}^1\Omega_{\infty, \tilde r}^{X}\right).
\end{equation*}
The last statement, \eqref{e5.15} and \eqref{e5.16*} imply \eqref{e5.21*}.
\end{proof}

Since the Kuratowski limit inferior and limit superior are closed (see, for example, \cite[p.~18]{AF}), we obtain the following corollary of Proposition~\ref{p5.4}.

\begin{corollary}\label{c5.8}
Let $(X, d)$ be an unbounded metric space, $\tilde{r}$ be a scaling sequence. Then the sets
$$
\bigcup_{\Omega_{\infty, \tilde r}^X \in \mathbf{\Omega_{\infty, \tilde r}^X}} Sp(\Omega_{\infty, \tilde r}^X) \quad\text{and}\quad \bigcup_{\Omega_{\infty, \tilde r'}^{X} \in \mathbf{\Omega_{\infty, \tilde r'}^X},\, \tilde r'\in\mathbf{\tilde R}} Sp(\Omega_{\infty, \tilde r'}^{X})
$$
are closed subsets of $[0, \infty)$.
\end{corollary}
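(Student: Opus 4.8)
The plan is to read off both statements directly from Proposition~\ref{p5.4}, which already identifies the two unions with Kuratowski limits of the rescaled sets $\frac{1}{r_n}Sp(X)$, and then to invoke the standard fact (the one cited just above the corollary) that such limits are closed. First I would record that $Sp(X)=\{d(p,x)\colon x\in X\}$ is a nonempty subset of $[0,\infty)$, since $X\ne\varnothing$ gives $0=d(p,p)\in Sp(X)$; as $r_n>0$ for every $n$, each rescaled set $\frac{1}{r_n}Sp(X)$ is again a nonempty subset of $[0,\infty)$, so the Kuratowski limits $\mathop{Li}\limits_{n\to\infty}\bigl(\frac{1}{r_n}Sp(X)\bigr)$ and $\mathop{Ls}\limits_{n\to\infty}\bigl(\frac{1}{r_n}Sp(X)\bigr)$ are well defined in the ambient space $\mathbb R$.

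Next, by equality \eqref{e5.15*} of Proposition~\ref{p5.4} the first union coincides with $\mathop{Li}\limits_{n\to\infty}\bigl(\frac{1}{r_n}Sp(X)\bigr)$, and by equality \eqref{e5.16*} the second union coincides with $\mathop{Ls}\limits_{n\to\infty}\bigl(\frac{1}{r_n}Sp(X)\bigr)$. Since the Kuratowski limit inferior and limit superior of any sequence of nonempty sets are closed subsets of the ambient metric space, both unions are closed in $\mathbb R$. It then remains only to confirm that these closed sets lie in $[0,\infty)$: if $y<0$, then with $\varepsilon=|y|$ the ball $B(y,\varepsilon)$ is disjoint from $[0,\infty)$ and hence from every $\frac{1}{r_n}Sp(X)$, so by \eqref{e5.14} and \eqref{e5.15} the point $y$ belongs to neither Kuratowski limit. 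Thus both limits, and with them both unions, are closed subsets of $[0,\infty)$.

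I expect no genuine obstacle here: essentially all the content is supplied by Proposition~\ref{p5.4} together with the cited closedness of Kuratowski limits, and the only point needing a separate sentence is the containment in $[0,\infty)$ rather than merely in $\mathbb R$, which the elementary ball argument above settles.
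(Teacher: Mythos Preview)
Your proof is correct and follows essentially the same route as the paper: invoke Proposition~\ref{p5.4} to identify the two unions with $\mathop{Li}\limits_{n\to\infty}\bigl(\frac{1}{r_n}Sp(X)\bigr)$ and $\mathop{Ls}\limits_{n\to\infty}\bigl(\frac{1}{r_n}Sp(X)\bigr)$, and then appeal to the standard fact that Kuratowski limits are closed. Your added remarks on nonemptiness and containment in $[0,\infty)$ are fine but not strictly needed, since by construction $Sp(\Omega_{\infty,\tilde r}^X)\subseteq[0,\infty)$ already.
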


Proposition~\ref{pr4.15} claims that every finite, nonempty and strongly rigid metric space $Y$ is isometric to a tangent space $\Omega_{\infty, \tilde r}^{X}$. Using Corollary~\ref{c5.8}, we will show that this is, generally speaking, not so to infinite strongly rigid metric spaces.

\medskip
Let us consider a strongly rigid metric space $(Y, \delta)$ such that:

$(i_1)$ $\delta(x, y)< 2$ for all points $x$, $y\in Y$;

$(i_2)$ $\sup\{\delta(x, y): x, y\in Y\}=2;$

$(i_3)$ The cardinality of the open ball $$B(y^{*}, r)=\{y\in Y: \delta(y, y^{*})<r\}$$ is finite for every $r\in (0, 2)$ and every $y^{*}\in Y.$

\begin{corollary}\label{c5.8*}
Let $(X, d)$ be an unbounded metric space, $\tilde r$ be a scaling sequence, $\Omega_{\infty, \tilde r}^{X}$ be tangent and let $(Y, \delta)$ be a strongly rigid metric space satisfying conditions $(i_1)$-$(i_3).$ If $Y_{1}\subseteq Y$ and $f: \Omega_{\infty, \tilde r}^{X}\to Y_{1}$ is an isometry, then $\Omega_{\infty, \tilde r}^{X}$ is finite. 
\end{corollary}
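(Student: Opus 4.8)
The plan is to show that the isometric copy $Y_1$ of $\Omega_{\infty, \tilde r}^{X}$ sits inside a single open ball of $Y$ whose radius is below $2$, since condition $(i_3)$ makes every such ball finite. The one genuine difficulty is ruling out that the distances $\rho(\nu_0,\nu)$ accumulate at $2$; everything else is bookkeeping.

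First I would record the elementary consequences of the hypotheses. Because $f$ is an isometry onto $Y_1\subseteq Y$, condition $(i_1)$ gives $\rho(\nu_0,\nu)=\delta(f(\nu_0),f(\nu))<2$ for every $\nu\in\Omega_{\infty, \tilde r}^{X}$, so $Sp(\Omega_{\infty, \tilde r}^{X})\subseteq[0,2)$. I would then observe that $\Omega_{\infty, \tilde r}^{X}$ is separable: fixing any $y^*\in Y$, condition $(i_1)$ yields $Y=\bigcup_{k\in\mathbb N}B\left(y^*,2-\tfrac1k\right)$, and each ball on the right is finite by $(i_3)$; hence $Y$ is countable, so is its subspace $Y_1$, and therefore $\Omega_{\infty, \tilde r}^{X}\cong Y_1$ is separable.

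Now the main step. Since $\Omega_{\infty, \tilde r}^{X}$ is tangent and separable, Corollary~\ref{c5.7} identifies $Sp(\Omega_{\infty, \tilde r}^{X})$ with the Kuratowski limit $\mathop{Li}\limits_{n\to\infty}\left(\tfrac{1}{r_n}Sp(X)\right)$, which is a closed subset of $[0,\infty)$. A closed subset of $[0,\infty)$ that is contained in the bounded interval $[0,2)$ is compact, so $M:=\sup Sp(\Omega_{\infty, \tilde r}^{X})$ is attained and lies in $Sp(\Omega_{\infty, \tilde r}^{X})\subseteq[0,2)$; in particular $M<2$. This is exactly the point at which tangency (through Corollary~\ref{c5.7}) is indispensable: conditions $(i_1)$ and $(i_3)$ by themselves permit infinite subsets of $Y$ whose distances to a fixed point tend to $2$, and only the closedness of $Sp(\Omega_{\infty, \tilde r}^{X})$ forbids this here.

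Finally I would choose any $r$ with $M<r<2$. Every point of $Y_1$ equals $f(\nu)$ for some $\nu\in\Omega_{\infty, \tilde r}^{X}$, and $\delta(f(\nu_0),f(\nu))=\rho(\nu_0,\nu)\le M<r$, so $Y_1\subseteq B(f(\nu_0),r)$. By $(i_3)$ the ball $B(f(\nu_0),r)$ is finite, hence $Y_1$ is finite, and the isometry $f$ makes $\Omega_{\infty, \tilde r}^{X}$ finite as well.
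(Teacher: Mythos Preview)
Your proof is correct and follows essentially the same route as the paper: establish countability of $Y$ (hence separability of $\Omega_{\infty,\tilde r}^X$), invoke Corollary~\ref{c5.7} to get that $Sp(\Omega_{\infty,\tilde r}^X)$ is closed, and then use $(i_1)$ together with $(i_3)$ to force finiteness. The only cosmetic difference is that the paper argues by contradiction (an infinite $Y_1$ would force $2\in Sp(\Omega_{\infty,\tilde r}^X)$ by closedness, contradicting $(i_1)$), whereas you argue directly that the supremum $M$ is attained and hence strictly below $2$, placing $Y_1$ inside a finite ball.
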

\begin{proof}
Let $Y_1$ and $f$ satisfy the above conditions and let $y^{*}=f^{-1}(\nu_0),$ $\nu_{0}=\tilde X_{\infty, \tilde r}^{0}$. Conditions $(i_2)$ and $(i_3)$ imply that $Y$ is countable. Consequently $\Omega_{\infty, \tilde r}^{X}$ is separable. Using Corollary~\ref{c5.8}, Corollary~\ref{c5.7} and $(i_2)$ we obtain that $Sp\left(\Omega_{\infty, \tilde r}^{X}\right)$ is a closed subset of $[0, 2]$. Since 
$$
Sp\left(\Omega_{\infty, \tilde r}^{X}\right) = \{\delta(y, y^{*})\colon y\in Y_{1}\}
$$
holds, the set $\{\delta(y, y^{*})\colon y\in Y_{1}\}$ is also closed. If $\Omega_{\infty, \tilde r}^{X}$ is infinite, then $Y_1$ is infinite and, for every sequence $(y_n)_{n\in\mathbb N}$ of distinct points $y_n\in Y_1$, we have 
$$
\lim_{n\to\infty} \delta(y^{*}, y_n) = 2.
$$
Hence
$$
2\in\{d(y, y^{*}): y\in Y_1\}
$$
holds, contrary to $(i_1)$.
\end{proof}

\begin{example}\label{ex5.10}
Let $(Y, \delta)$ be a metric space with $Y=\mathbb N$ and the metric $\delta$ defined such that:
$$\delta(1, 2)=1+\frac{1}{2};$$
$$\delta(1, 3)=1+\frac{2}{3}, \quad \delta(2, 3)=1+\frac{3}{4};$$
$$\delta(1, 4)=1+\frac{4}{5}, \quad \delta(2, 4)=1+\frac{5}{6}, \quad \delta(3, 4)=1+\frac{6}{7};$$
$$\delta(1, 5)=1+\frac{7}{8}, \quad \delta(2, 5)=1+\frac{8}{9}, \quad \delta(3, 5)=1+\frac{9}{10}, \quad \delta(4, 5)=1+\frac{10}{11};$$
$$..................................................................................................................\quad .$$
Then $(Y, \delta)$ is a countable, complete and strongly rigid metric space satisfying conditions $(i_1)$-$(i_3).$ By Corollary~\ref{c5.8*} no tangent space $\Omega_{\infty, \tilde r}^{X}$ is isometric to $(Y, \delta).$
\end{example}

\begin{corollary}\label{c5.9}
Let $(X, d)$ be an unbounded metric space and let $\tilde r$ be a scaling sequence. Then the following statements are equivalent:
\begin{enumerate}
\item[\rm(i)] There is a single-point pretangent space $\Omega_{\infty, \tilde r}^{X}$;
\item[\rm(ii)] All $\Omega_{\infty, \tilde r}^{X}$ are single-point;
\item[\rm(iii)] The equality
\begin{equation*}
\mathop{Li} \limits_{n\to\infty} \left(\frac{1}{r_n} Sp(X)\right) = \{0\}
\end{equation*}
holds;
\item[\rm(iv)] The net $G_{X, \tilde{r}}$ of pretangent spaces to $(X,d)$ at infinity is trivial,
$$
|V(G_{X, \tilde{r}})| = 1.
$$
\end{enumerate}
\end{corollary}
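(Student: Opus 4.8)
The plan is to show that each of the four statements is equivalent to the single condition $(*)\colon Seq(X, \tilde r) = \tilde X_{\infty, \tilde r}^{0}$. Since the inclusion $\tilde X_{\infty, \tilde r}^{0}\subseteq Seq(X, \tilde r)$ always holds, $(*)$ is merely the reverse inclusion, equivalently $\tilde{\tilde d}_{\tilde r}(\tilde x)=0$ for every $\tilde x\in Seq(X, \tilde r)$. Once every item is pinned to $(*)$, the full equivalence follows at once.

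First I would dispose of (ii), (iii) and (iv), all of which come almost directly from material already assembled. By the proof of Theorem~\ref{th5.3} a pretangent space is single-point exactly when its maximal self-stable family satisfies $\tilde X_{\infty, \tilde r}=\tilde X_{\infty, \tilde r}^{0}$, which is the same as $Sp(\Omega_{\infty, \tilde r}^{X})=\{0\}$. As $0\in Sp(\Omega_{\infty, \tilde r}^{X})$ for every pretangent space, statement (ii) is equivalent to $\bigcup_{\Omega} Sp(\Omega_{\infty, \tilde r}^{X})=\{0\}$; by \eqref{e5.15*} this union equals $\mathop{Li}\limits_{n\to\infty}\left(\frac{1}{r_n}Sp(X)\right)$, so (ii)$\Leftrightarrow$(iii). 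By Remark~\ref{r5.5} the same union equals $\{\tilde{\tilde d}_{\tilde r}(\tilde x)\colon\tilde x\in Seq(X,\tilde r)\}$, which is $\{0\}$ precisely when $(*)$ holds; hence (ii)$\Leftrightarrow$(iii)$\Leftrightarrow(*)$. For (iv), I would invoke Proposition~\ref{pr2.2}(ii) to see that the $\equiv$-class of any $\tilde z\in\tilde X_{\infty, \tilde r}^{0}$ is exactly $\tilde X_{\infty, \tilde r}^{0}$, so the vertex set $V(G_{X, \tilde r})$ collapses to one point iff all of $Seq(X, \tilde r)$ coincides with that class, i.e. iff $(*)$ holds.

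The heart of the matter is (i)$\Leftrightarrow(*)$, and here lies the main obstacle: a priori (i) furnishes only \emph{one} single-point pretangent space, whereas I must deduce the global collapse $(*)$. The key tool is Proposition~\ref{pr2.2}(vi), which asserts that every $\tilde x\in Seq(X, \tilde r)$ is mutually stable with every $\tilde z\in\tilde X_{\infty, \tilde r}^{0}$. Indeed, if some pretangent space is single-point, then its maximal self-stable family equals $\tilde X_{\infty, \tilde r}^{0}$, so $\tilde X_{\infty, \tilde r}^{0}$ is itself maximal self-stable; for an arbitrary $\tilde x\in Seq(X, \tilde r)$, part (vi) makes $\tilde x$ mutually stable with every element of $\tilde X_{\infty, \tilde r}^{0}$, and maximality then forces $\tilde x\in\tilde X_{\infty, \tilde r}^{0}$, which is $(*)$. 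Conversely, under $(*)$ the set $\tilde X_{\infty, \tilde r}^{0}$ is the unique maximal self-stable family and its metric identification is the single point $\nu_0$, giving (i) (and in fact (ii)).

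Assembling $(*)\Leftrightarrow$(i), $(*)\Leftrightarrow$(ii)$\Leftrightarrow$(iii) and $(*)\Leftrightarrow$(iv) then yields the equivalence of all four statements. The only genuine input beyond \eqref{e5.15*} and Remark~\ref{r5.5} is the mutual-stability fact (vi): it is exactly what prevents a second, strictly larger maximal self-stable family from coexisting once the distinguished point has absorbed the whole of $Seq(X, \tilde r)$. The remaining steps are routine bookkeeping with the cited identities.
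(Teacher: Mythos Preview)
Your argument is correct and coincides with the paper's approach on the only nontrivial implication: you show that a single-point pretangent space forces $\tilde X_{\infty,\tilde r}^{0}$ to be maximal self-stable, and then mutual stability (Proposition~\ref{pr2.2}(vi)) forces $Seq(X,\tilde r)=\tilde X_{\infty,\tilde r}^{0}$; the paper reaches the same conclusion by contradiction, assuming a second pretangent space with at least two points and noting $\tilde X_{\infty,\tilde r}^{0}={}^{1}\tilde X_{\infty,\tilde r}\subsetneq{}^{2}\tilde X_{\infty,\tilde r}$ violates maximality. The only organizational difference is that you route all four statements through the hub $(*)$, while the paper establishes (i)$\Rightarrow$(ii) explicitly and leaves the remaining equivalences (via~\eqref{e5.15*}, Remark~\ref{r5.5}, and~\eqref{e2.10}) to the reader.
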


\begin{proof}
It suffices to show that the implication (i) $\Rightarrow$ (ii) is valid. Suppose contrary that there exist pretangent spaces $^1\Omega_{\infty, \tilde r}^{X}$ and $^2\Omega_{\infty, \tilde r}^{X}$ such that
$$
\bigl|\vphantom{1}^1\Omega_{\infty, \tilde r}^{X}\bigr| = 1 \quad \text{and} \quad \bigl|\vphantom{1}^2\Omega_{\infty, \tilde r}^{X}\bigr| \geq 2.
$$
Write $\vphantom{1}^1\tilde{X}_{\infty, \tilde r}$ and $\vphantom{1}^2\tilde{X}_{\infty, \tilde r}$ for the maximal self-stable sets corresponding $^1\Omega_{\infty, \tilde r}^{X}$ and $^2\Omega_{\infty, \tilde r}^{X}$ respectively. Then the equality $\bigl|\vphantom{1}^1\Omega_{\infty, \tilde r}^{X}\bigr| = 1$ implies the equality
$$
\vphantom{1}^1\tilde{X}_{\infty, \tilde r} = \tilde{X}_{\infty, \tilde r}^0.
$$
By statement (v) of Proposition~\ref{pr2.2} we have $\tilde{X}_{\infty, \tilde r}^0 \in \vphantom{1}^2\Omega_{\infty, \tilde r}^{X}$. It follows from the inequality $\bigl|\vphantom{1}^2\Omega_{\infty, \tilde r}^{X}\bigr| \geq 2$ that
$$
\vphantom{1}^2\tilde{X}_{\infty, \tilde r} \setminus \tilde{X}_{\infty, \tilde r}^0 \neq \varnothing.
$$
Consequently we have
$$
\vphantom{1}^1\tilde{X}_{\infty, \tilde r} \subseteq \vphantom{1}^2\tilde{X}_{\infty, \tilde r} \quad \text{and} \quad \vphantom{1}^2\tilde{X}_{\infty, \tilde r} \setminus \vphantom{1}^1\tilde{X}_{\infty, \tilde r} \neq \varnothing.
$$
Since $\vphantom{1}^2\tilde{X}_{\infty, \tilde r}$ is self-stable, the set $\vphantom{1}^1 \tilde{X}_{\infty, \tilde r}$ is not maximal self-stable, contrary to the definition.
\end{proof}

Using Corollary~\ref{c5.9} we can construct an unbounded metric space $(X,d)$ such that there exist single-point pretangent spaces but these spaces are never the tangent spaces to $(X,d)$ at infinity.

\begin{example}\label{ex5.7}
Let $\mathbb{Z}$ be the set of all integer numbers, $t \in (1,\infty)$ and let $X$ be a subset of the real line $\mathbb{R}$ (with the standard metric $d(x,y)=|x-y|$) such that $x \in X$ if and only if $x=0$ or $x = t^i$ for some $i \in \mathbb{Z}$. Let us define a scaling sequence $\tilde{r} = (r_n)_{n \in \mathbb N}$ as
\begin{equation}\label{e5.22}
r_n := t^{n/2}, \quad n \in \mathbb N
\end{equation}
and put $p=0$. Then we have
$$
Sp(X) = \{|x-0|\colon x \in X\} = X
$$
and
\begin{equation}\label{e5.23}
\frac{1}{r_n} Sp(X) = \begin{cases}
X & \text{if $n$ is even}\\
\sqrt{t}X & \text{if $n$ is odd}.
\end{cases}
\end{equation}
It is easy to see that the inclusion
\begin{equation}\label{e5.24}
\mathop{Li} \limits_{n\to\infty} \left(\frac{1}{r_n} Sp(X)\right) \subseteq \mathop{Li} \limits_{k\to\infty} \left(\frac{1}{r_{n_k}} Sp(X)\right)
\end{equation}
holds for every infinite subsequence $(r_{n_k})_{k \in \mathbb N}$ of $\tilde{r}'$. Using \eqref{e5.23} and \eqref{e5.24} with $n_k=2k$, $k \in \mathbb{N}$ and with  $n_k=2k+1$ we obtain
$$
\mathop{Li} \limits_{n\to\infty} \left(\frac{1}{r_n} Sp(X)\right) \subseteq X
$$
and, respectively,
$$
\mathop{Li} \limits_{n\to\infty} \left(\frac{1}{r_n} Sp(X)\right) \subseteq \sqrt{t} X.
$$
Consequently, we have
$$
\mathop{Li} \limits_{n\to\infty} \left(\frac{1}{r_n} Sp(X)\right) \subseteq (\sqrt{t} X) \cap X = \{0\}.
$$
It is clear that
$$
0 \in \mathop{Li} \limits_{n\to\infty} \left(\frac{1}{r_n} Sp(X)\right).
$$
Thus we obtain the equality
$$
\mathop{Li} \limits_{n\to\infty} \left(\frac{1}{r_n} Sp(X)\right) = \{0\}.
$$
Now Corollary~\ref{c5.9} implies that, for $\tilde{r} = (r_n)_{n \in \mathbb N}$ defined by \eqref{e5.22}, there is a unique pretangent space $\Omega_{\infty, \tilde r}^X$ and this space is single-point. A simple calculation shows that the equality
\begin{equation}\label{e5.25}
p^+(Sp(X), \infty) = \frac{t-1}{t}
\end{equation}
holds. Consequently, by Theorem~\ref{th5.3} the metric space $(X,d)$ does not have any single-point \textit{tangent} spaces at infinity.
\end{example}

Letting, at equality \eqref{e5.25}, $t$ to $1$ we obtain the following proposition.

\begin{proposition}\label{p5.8}
For every $\varepsilon >0$ there are an unbounded metric space $(X,d)$ and a scaling sequence $\tilde{r}$ such that $G_{X, \tilde{r}}$ is trivial and
$$
p^+ \bigl(Sp(X), \infty\bigr) < \varepsilon
$$
holds.
\end{proposition}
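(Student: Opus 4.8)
The plan is to specialize the construction carried out in Example~\ref{ex5.7} to a suitably chosen value of the parameter $t \in (1, \infty)$. Recall that in that example, for each $t > 1$, one takes $X \subseteq \mathbb{R}$ consisting of $0$ together with all powers $t^i$, $i \in \mathbb{Z}$, with the standard metric, and the scaling sequence $\tilde r = (t^{n/2})_{n\in\mathbb N}$ given by~\eqref{e5.22}. Two facts are established there which I would simply invoke: first, that $\mathop{Li}_{n\to\infty}\left(\frac{1}{r_n}Sp(X)\right) = \{0\}$, whence by Corollary~\ref{c5.9} the net $G_{X, \tilde r}$ is trivial; and second, the exact porosity computation~\eqref{e5.25}, namely $p^+\bigl(Sp(X), \infty\bigr) = \frac{t-1}{t}$.

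The only additional observation needed is elementary: the map $t \mapsto \frac{t-1}{t} = 1 - \frac{1}{t}$ is strictly increasing on $(1, \infty)$ and satisfies $\lim_{t\to 1^+}\frac{t-1}{t} = 0$. Hence, given $\varepsilon > 0$, I would choose $t$ so close to $1$ that $\frac{t-1}{t} < \varepsilon$; concretely, any $t$ with $1 < t < (1-\varepsilon)^{-1}$ works when $\varepsilon < 1$, while any $t > 1$ works when $\varepsilon \ge 1$, since then $\frac{t-1}{t} < 1 \le \varepsilon$.

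With this $t$ fixed, I would take $(X, d)$ and $\tilde r$ exactly as in Example~\ref{ex5.7}. The triviality of $G_{X, \tilde r}$ is precisely the conclusion of that example (via the Kuratowski-limit computation and Corollary~\ref{c5.9}), and the estimate $p^+\bigl(Sp(X), \infty\bigr) = \frac{t-1}{t} < \varepsilon$ follows from~\eqref{e5.25} together with the choice of $t$. This establishes the proposition.

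There is no genuine obstacle here: the statement is a quantitative reading of Example~\ref{ex5.7}, and all the substantive work — the Kuratowski-limit computation forcing triviality of the net and the explicit value of the porosity — has already been carried out. The one point worth stating cleanly is that the porosity $\frac{t-1}{t}$ of this particular family of spaces can be driven below any prescribed threshold, which is immediate from its monotone dependence on $t$ as $t \to 1^+$.
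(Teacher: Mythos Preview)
Your proposal is correct and matches the paper's approach exactly: the paper obtains the proposition simply by letting $t \to 1$ in equality~\eqref{e5.25}, drawing on the triviality of $G_{X,\tilde r}$ already established in Example~\ref{ex5.7}. Your write-up merely makes explicit the elementary observation that $t \mapsto \frac{t-1}{t}$ tends to $0$ as $t \to 1^+$, which is all that is needed.
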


In the previous proposition, we considered the metric spaces having an arbitrary small positive porosity at infinity. What happens if this porosity becomes zero?

\begin{proposition}\label{p5.11}
Let $(X,d)$ be an unbounded metric space, $p \in X$. If $Sp(X)$ is a nonporous set, then the inequality
\begin{equation}\label{e5.26}
|\Omega_{\infty, \tilde r}^X| \geq 2
\end{equation}
holds for every pretangent space $\Omega_{\infty, \tilde r}^X$.
\end{proposition}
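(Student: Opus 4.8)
The plan is to reduce the claim to a property of the Kuratowski limit inferior $\mathop{Li}\limits_{n\to\infty}\left(\frac{1}{r_n}Sp(X)\right)$ and then feed it with the nonporosity hypothesis. First I note that every pretangent space $\Omega_{\infty, \tilde r}^X$ is nonempty, since it contains the distinguished point $\nu_0=\tilde X_{\infty, \tilde r}^{0}$ by statements (v) and (vii) of Proposition~\ref{pr2.2}. Hence inequality \eqref{e5.26} for all $\Omega_{\infty, \tilde r}^X$ is exactly the assertion that no pretangent space is single-point. By the equivalence of (i) and (iii) in Corollary~\ref{c5.9}, a single-point pretangent space exists if and only if $\mathop{Li}\limits_{n\to\infty}\left(\frac{1}{r_n}Sp(X)\right)=\{0\}$. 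So it suffices to show that this set contains a positive number; I will show that $1$ belongs to it.

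The key step uses $p^{+}(Sp(X), \infty)=0$ to produce points $s_n\in Sp(X)$ with $\frac{s_n}{r_n}\to 1$. Since $p\in X$, we have $0=d(p,p)\in Sp(X)$, and $Sp(X)$ is unbounded because $X$ is; thus $a_n:=\sup\bigl(Sp(X)\cap[0,r_n]\bigr)$ and $b_n:=\inf\bigl(Sp(X)\cap[r_n,\infty)\bigr)$ are finite and satisfy $a_n\le r_n\le b_n$ with $b_n\to\infty$. When $r_n\notin Sp(X)$ the interval $(a_n,b_n)$ lies in $[0,b_n]\setminus Sp(X)$, so $b_n-a_n\le l(\infty, b_n, Sp(X))$; this inequality holds trivially when $r_n\in Sp(X)$, as then $a_n=b_n=r_n$. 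Nonporosity gives $\lim_{h\to\infty}\frac{l(\infty,h,Sp(X))}{h}=0$, whence $\frac{a_n}{b_n}=1-\frac{b_n-a_n}{b_n}\to 1$; squeezing through $\frac{a_n}{b_n}\le\frac{a_n}{r_n}\le 1$ (valid since $a_n\le r_n\le b_n$) yields $\frac{a_n}{r_n}\to 1$.

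Because $a_n=\sup\bigl(Sp(X)\cap[0,r_n]\bigr)$, I may pick $s_n\in Sp(X)$ with $a_n-1\le s_n\le a_n$ (taking $s_n=a_n$ whenever the supremum is attained), so that $\frac{s_n}{r_n}\to 1$ as well. As $\frac{s_n}{r_n}\in\frac{1}{r_n}Sp(X)$ for every $n$, definition \eqref{e5.14} gives $1\in\mathop{Li}\limits_{n\to\infty}\left(\frac{1}{r_n}Sp(X)\right)$. Thus $\mathop{Li}\limits_{n\to\infty}\left(\frac{1}{r_n}Sp(X)\right)\ne\{0\}$, and Corollary~\ref{c5.9} shows that no single-point pretangent space exists, which is precisely \eqref{e5.26}.

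The main obstacle is the porosity bookkeeping: one must straddle $r_n$ by a gap of $Sp(X)$, estimate its relative length through the window $[0,b_n]$ rather than $[0,r_n]$, and then replace the possibly unattained endpoint $a_n$ by a genuine element of $Sp(X)$. If one prefers to avoid Corollary~\ref{c5.9}, the same $s_n$ give a self-contained argument: set $\tilde x=(x_n)$ with $d(x_n,p)=s_n$, so that $\tilde x\in Seq(X,\tilde r)$ with $\tilde{\tilde d}_{\tilde r}(\tilde x)=1$; if some $\tilde X_{\infty,\tilde r}$ were equal to $\tilde X_{\infty,\tilde r}^{0}$, then Proposition~\ref{pr2.2}(vi) would make $\tilde x$ mutually stable with every element of $\tilde X_{\infty,\tilde r}^{0}$, and Lemma~\ref{lem1.7} would force $\tilde x\in\tilde X_{\infty,\tilde r}=\tilde X_{\infty,\tilde r}^{0}$, contradicting $\tilde{\tilde d}_{\tilde r}(\tilde x)\ne 0$.
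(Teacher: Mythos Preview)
Your proof is correct and follows essentially the same route as the paper's: use nonporosity to produce points of $Sp(X)$ at height roughly $r_n$ (so that $\tilde{\tilde d}_{\tilde r}(\tilde x)=1$ for some $\tilde x$), then invoke Corollary~\ref{c5.9}. The only cosmetic differences are that the paper uses the equivalence (i)$\Leftrightarrow$(ii) of Corollary~\ref{c5.9} and bounds the gap directly inside $[0,r_n)$ via $\tau_n=\sup([0,r_n)\cap Sp(X))$, whereas you use (i)$\Leftrightarrow$(iii) and straddle $r_n$ with the interval $(a_n,b_n)$ inside the window $[0,b_n]$; your detour through the Kuratowski limit inferior is a pleasant use of the machinery just developed but not essential.
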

\begin{proof}
Suppose $Sp(X)$ is nonporous at infinity, i.e.,
\begin{equation}\label{e5.27}
p^+(Sp(X), \infty) = 0
\end{equation}
holds. Let $\tilde{r} = (r_n)_{n \in \mathbb N}$ be a scaling sequence. By Corollary~\ref{c5.9} it suffices to show that there is a pretangent space $\Omega_{\infty, \tilde r}^X$ satisfying~\eqref{e5.26}. From Definition~\ref{d5.1} and \eqref{e5.27} it follows that
\begin{equation}\label{e5.28}
\lim_{n\to\infty} \frac{l(\infty, r_n, Sp(X))}{r_n} = 0,
\end{equation}
where $l(\infty, r_n, Sp(X))$ is the length of the longest interval in $[0, r_n)\setminus Sp(X)$. Write
$$
\tau_n := \sup ([0, r_n)\cap Sp(X)), \quad n \in \mathbb{N}.
$$
Then~\eqref{e5.28} implies the equality
$$
\lim_{n\to\infty} \frac{r_n - \tau_n}{r_n} = 0.
$$
Thus
\begin{equation}\label{e5.29}
\lim_{n\to\infty} \frac{\tau_n}{r_n} = 1
\end{equation}
hold. It is easy to see that, for every $n \in \mathbb N$, we have
\begin{equation}\label{e5.30}
\tau_n = \sup\{d(p,x)\colon x \in B(p, r_n)\},
\end{equation}
where $B(p, r_n)$ is the open ball $\{x \in X \colon d(x,p) < r_n\}$. It follows from~\eqref{e5.29}, \eqref{e5.30} and the definition of $Seq(X, \tilde{r})$ that there is $\tilde{x} \in Seq(X, \tilde{r})$ such that
$$
\tilde{\tilde{d}}_{\tilde{r}} (\tilde{x}) = \lim_{n\to\infty} \frac{d(x_n, p)}{r_n} = 1.
$$
Consequently if $\tilde{X}_{\infty, \tilde{r}}$ is maximal self-stable subset of $Seq(X, \tilde{r})$ such that $\tilde{x} \in \tilde{X}_{\infty, \tilde{r}}$, then the inequality
$$
|\Omega_{\infty, \tilde r}^X| \geq 2
$$
holds for the metric identification $\Omega_{\infty, \tilde r}^X$ of $\tilde{X}_{\infty, \tilde{r}}$.
\end{proof}

\section{Acknowledgments}

The research of authors was supported by the grant of the State Fund for Fundamental Research of Ukraine (project F71/20570). The first author was also supported by the National Academy of Sciences of Ukraine in the frame of scientific research project for young scientists ``Geometric properties of metric spaces and mappings in Finsler manifolds''.

\medskip

\textbf{Viktoriia Bilet}

Institute of Applied Mathematics and Mechanics of NASU, Dobrovolskogo Str. 1, Sloviansk 84100, Ukraine

\textbf{E-mail:} viktoriiabilet@gmail.com

\bigskip

\textbf{Oleksiy Dovgoshey}

Institute of Applied Mathematics and Mechanics of NASU, Dobrovolskogo Str. 1, Sloviansk 84100, Ukraine

\textbf{E-mail:} oleksiy.dovgoshey@gmail.com
\end{document}